\begin{document}

\newtheorem{theorem}{Theorem}[section]
\newtheorem{result}[theorem]{Result}
\newtheorem{fact}[theorem]{Fact}
\newtheorem{conjecture}[theorem]{Conjecture}
\newtheorem{lemma}[theorem]{Lemma}
\newtheorem{proposition}[theorem]{Proposition}
\newtheorem{corollary}[theorem]{Corollary}
\newtheorem{facts}[theorem]{Facts}
\newtheorem{props}[theorem]{Properties}
\newtheorem*{thmA}{Theorem A}
\newtheorem{ex}[theorem]{Example}
\theoremstyle{definition}
\newtheorem{definition}[theorem]{Definition}
\newtheorem{remark}[theorem]{Remark}
\newtheorem{example}[theorem]{Example}
\newtheorem*{defna}{Definition}

\newcommand{\notes} {\noindent \textbf{Notes.  }}
\newcommand{\note} {\noindent \textbf{Note.  }}
\newcommand{\defn} {\noindent \textbf{Definition.  }}
\newcommand{\defns} {\noindent \textbf{Definitions.  }}
\newcommand{\x}{{\bf x}}
\newcommand{\z}{{\bf z}}
\newcommand{\B}{{\bf b}}
\newcommand{\V}{{\bf v}}
\newcommand{\T}{\mathbb{T}}
\newcommand{\Z}{\mathbb{Z}}
\newcommand{\Hp}{\mathbb{H}}
\newcommand{\D}{\mathbb{D}}
\newcommand{\R}{\mathbb{R}}
\newcommand{\N}{\mathbb{N}}
\renewcommand{\B}{\mathbb{B}}
\newcommand{\C}{\mathbb{C}}
\newcommand{\ft}{\widetilde{f}}
\newcommand{\dt}{{\mathrm{det }\;}}
 \newcommand{\adj}{{\mathrm{adj}\;}}
 \newcommand{\0}{{\bf O}}
 \newcommand{\av}{\arrowvert}
 \newcommand{\zbar}{\overline{z}}
 \newcommand{\xbar}{\overline{X}}
 \newcommand{\htt}{\widetilde{h}}
\newcommand{\ty}{\mathcal{T}}
\renewcommand\Re{\operatorname{Re}}
\renewcommand\Im{\operatorname{Im}}
\newcommand{\tr}{\operatorname{Tr}}
\newcommand{\Stab}{\operatorname{Stab}}

\newcommand{\ds}{\displaystyle}
\numberwithin{equation}{section}

\renewcommand{\theenumi}{(\roman{enumi})}
\renewcommand{\labelenumi}{\theenumi}

\title{Strongly automorphic mappings and Julia sets of uniformly quasiregular mappings}

\author{Alastair Fletcher}
\address{Department of Mathematical Sciences, Northern Illinois University, DeKalb, IL 60115-2888. USA}
\email{fletcher@math.niu.edu}

\author{Douglas Macclure}
\address{Department of Mathematical Sciences, Northern Illinois University, DeKalb, IL 60115-2888. USA}
\email{doug.macclure@gmail.com}
\subjclass[2010]{Primary 37F10; Secondary 30C65, 30D05}
\thanks{This work was supported by a grant from the Simons Foundation (\#352034, Alastair Fletcher).}

\begin{abstract}
A theorem of Ritt states the Poincar\'e linearizer $L$ of a rational map $f$ at a repelling fixed point is periodic only if $f$ is conjugate to a power of $z$, a Chebyshev polynomial or a Latt\`es map. The converse, except for the case where the fixed point is an endpoint of the interval Julia set for a Chebyshev polynomial, is also true.
In this paper, we prove the analogous statement in the setting of strongly automorphic quasiregular mappings and uniformly quasiregular mappings in $\R^n$. Along the way, we characterize the possible automorphy groups that can arise via crystallographic orbifolds and a use of the Poincar\'e conjecture.
We further give a classification of the behaviour of  uniformly quasiregular mappings on their Julia set when the Julia set is a quasisphere, quasidisk or all of $\R^n$ and the Julia set coincides with the set of conical points.
Finally, we prove an analogue of the Denjoy-Wolff Theorem for uniformly quasiregular mappings in $\B^3$, the first such generalization of the Denjoy-Wolff Theorem where there is no guarantee of non-expansiveness with respect to a metric.
\end{abstract}

\maketitle

\section{Introduction and Statement of Results}

\subsection{Linearization} A central theme in dynamics is that of linearization: near a fixed point, a given mapping is conjugated to a much simpler mapping from which the behaviour of the iterates of the mapping near the fixed point can be deduced. In complex dynamics, if $z_0$ is a fixed point of a holomorphic mapping $f$, the multiplier $\lambda = f'(z_0)$ plays a crucial role in determining the behaviour of $f$ near $z_0$. More precisely, the celebrated Koenigs Linearization Theorem states that we can conformally conjugate $f$ to the mapping $w\mapsto \lambda w$ in a neighbourhood of $z_0$ if $|\lambda|$ is not $0$ or $1$. We briefly remark that the case $\lambda =0$ can be dealt with via B\"ottcher's Theorem and the case $|\lambda |=1$ involves more subtlety, but we will say no more about these cases here. We refer to \cite{Milnor} for more detail on linearization in complex dynamics.

If we assume that $|\lambda| > 1$, that is $z_0$ is a repelling fixed point of $f$, then the conformal map which conjugates $f$ to its derivative in a neighbourhood of $z_0$ has an inverse $L$ which can be defined everywhere via the functional equation
\begin{equation}
\label{eq:func} 
f\circ L = L \circ \lambda.
\end{equation}
This map $L$ is a transcendental entire function, satisfies $L(0) = z_0$ and is unique once a value of $L'(0)$ is prescribed. The usual normalization is that $L'(0) = 1$. This map $L$ is called the {\it Poincar\'e linearizer of $f$ at $z_0$}.

One may ask how the properties of the linearizer $L$ depend on $f$. In 1921, Ritt \cite{Ritt} classified the possibilities when $L$ is periodic and $f$ is rational.

\begin{thmA}[\cite{Ritt}]
\label{thm:Ritt}
The Poincar\'e linearizer $L$ of a rational map $f$ at a repelling fixed point $z_0$ is periodic only if $f$ is conjugate to one of:
\begin{enumerate}[(i)]
\item a power map $P_d(z)=z^d$, where $d\geq 2$;
\item a Chebyshev polynomial, that is, a polynomial $T_d$ satisfying $T_d(\cos z) = \cos( dz)$, where $d\geq 2$;
\item a Latt\`es rational map $\ell$ satisfying the functional equation $\ell \circ \Theta = \Theta \circ A$, where $A$ is an affine self-map of a given torus $S$ and $\Theta$ is a holomorphic map from $S$ to the Riemann sphere.
\end{enumerate}
Moreover, the converse is true unless $f$ is a Chebyshev map and we linearize about an end-point of the interval $J(f)$.
\end{thmA}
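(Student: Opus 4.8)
The plan is to study the group of affine symmetries of the linearizer. Write $M(z)=\lambda z$, so the functional equation reads $f\circ L=L\circ M$, and set
\[
\Gamma=\{\,\gamma(z)=az+b:\ a\in\C^{*},\ b\in\C,\ L\circ\gamma=L\,\}.
\]
First I would check that $\Gamma$ is a discrete group: if $\gamma_j\to\operatorname{id}$, then at any point $z$ where $L$ is locally injective (all but the isolated critical points of $L$) the relation $L(\gamma_j(z))=L(z)$ forces $\gamma_j(z)=z$ for large $j$, and two such fixed points force $\gamma_j=\operatorname{id}$. Next, the functional equation gives $L\circ(M\gamma M^{-1})=L$ for every $\gamma\in\Gamma$ (from $LM\gamma=fL\gamma=fL=LM$), so $M$ normalizes $\Gamma$. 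Periodicity of $L$ means the translation subgroup $T=\Gamma\cap\{z\mapsto z+b\}$ is nontrivial; being a discrete subgroup of $(\C,+)$ it has rank $1$ or $2$, and since $M$ and every $\gamma\in\Gamma$ carry $T$ into itself, the point group $\Gamma/T$ is finite --- cyclic of order $1,2,3,4$ or $6$ when $T$ has rank $2$, and $\{\pm1\}$ or trivial when $T$ has rank $1$ --- while $\lambda T\subseteq T$, which forces $\lambda\in\Z$ in the rank-$1$ case.

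If $T=\Lambda$ is a rank-$2$ lattice then $L$ descends to a holomorphic map $\Theta\colon S:=\C/\Lambda\to\widehat{\C}$, $M$ descends (using $\lambda\Lambda\subseteq\Lambda$) to an affine endomorphism $A$ of $S$, and $f\circ\Theta=\Theta\circ A$; this is exactly the defining relation (iii), so $f$ is a Latt\`es map. If $T=\omega\Z$ has rank $1$ then $L=g\circ E$ with $E(z)=e^{2\pi i z/\omega}\colon\C\to\C^{*}$ the universal cover and $g\colon\C^{*}\to\widehat{\C}$ holomorphic, $\lambda=n\in\Z$ with $|n|\ge 2$, and the functional equation becomes $g\circ P_n=f\circ g$ for $P_n(w)=w^{n}$. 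Granting that $g$ extends to a rational map $\widehat{\C}\to\widehat{\C}$ (the crux, discussed below), the argument closes: if the point group is trivial, $g(0)$ and $g(\infty)$ are fixed points of $f$ at which $f$ is totally ramified of local degree $\deg f$, and a Riemann--Hurwitz count shows these exhaust the critical points of $f$, forcing $f$ conjugate to $z^{\deg f}$; if the point group is $\{\pm1\}$, $g$ is invariant under $w\mapsto 1/w$ and hence factors through $w\mapsto\tfrac12(w+1/w)$, which conjugates $P_n$ to the Chebyshev polynomial $T_n$, and the analogous bookkeeping gives $f$ conjugate to $T_n$.

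For the converse I would exhibit the linearizer from the relevant uniformizer. For $P_d(z)=z^{d}$ at a repelling fixed point $\zeta$ (a $(d-1)$-st root of unity, multiplier $d$), $L(z)=\zeta\,e^{z/\zeta}$ satisfies $P_d\circ L=L\circ M$, $L(0)=\zeta$, $L'(0)=1$ and is periodic; since $\exp$ has no critical values there is no exceptional fixed point. For a Latt\`es map $\ell$ with $\ell\circ\Theta=\Theta\circ A$, lift to $\C$ to get $\widetilde\Theta\circ\widetilde A=\ell\circ\widetilde\Theta$ with $\widetilde A(z)=\lambda z+\beta$, $|\lambda|>1$; the fixed point $p$ of $\widetilde A$ maps to a repelling fixed point $z_0=\widetilde\Theta(p)$, and if $p$ is a regular point of $\widetilde\Theta$ then, after conjugating $\widetilde A$ to $z\mapsto\lambda z$ by a translation and rescaling, $L$ is a reparametrization of $\widetilde\Theta$ and hence doubly periodic. (At the finitely many repelling fixed points $z_0=\widetilde\Theta(p)$ with $p$ a critical point of $\widetilde\Theta$, the linearizer is of the square-root type described next, mirroring the Chebyshev endpoint case.) For a Chebyshev polynomial $T_d$, with $\cos$ semiconjugating $z\mapsto dz$ to $T_d$ and $J(T_d)=[-1,1]$: at an interior repelling fixed point $\cos\theta_0$ with $\theta_0\in(0,\pi)$, the map $\cos$ is locally invertible there and $L(z)=\cos(\theta_0-z/\sin\theta_0)$ is a periodic linearizer; but at an endpoint $\pm1=\cos 0$ or $\cos\pi$ --- a critical value of $\cos$, where the multiplier is $d^{2}$ --- the linearizer must take the form $L(z)=\cos(\sqrt{-2z})$ (up to sign and normalization), which is single-valued and entire but not periodic, as its level sets are not periodic subsets of $\C$. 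This is precisely the stated exception, and it is conjugacy-invariant because being an endpoint of the interval Julia set is a conformal invariant.

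The main obstacle is the rank-$1$ step: showing that $g\colon\C^{*}\to\widehat{\C}$ extends to a rational map, i.e. has no essential singularity at $0$ or $\infty$. The abstract relation $g\circ P_n=f\circ g$ does not preclude this, so one must use that $f$ is rational. The idea: $0$ (say) is super-attracting for $P_n$, so for fixed $w$ with $0<|w|<1$ the points $w^{n^{k}}$ spiral into $0$; an essential singularity of $g$ at $0$ would then, via Casorati--Weierstrass and the functional equation, force the forward $f$-orbit of $g(w)$ to be dense in $\widehat{\C}$ for an open set of $w$, while $g$ is holomorphic on a neighbourhood of the unit circle and (near $w=1$, where $g$ linearizes $f$ at $z_0$) injective there, so it maps the unit circle onto an $f$-invariant real-analytic curve inside $J(f)$; reconciling ``all orbits dense'' with such an invariant curve, and with the constraints that the rationality of $f$ imposes near $z_0$, delivers the contradiction. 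With $g$ known to be rational, the identifications of $f$ in the second paragraph follow from the Riemann--Hurwitz count there, backed up --- for the remaining configurations --- by the rigidity of rational semiconjugacies of $z^{n}$, essentially Ritt's own analysis.
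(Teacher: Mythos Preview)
The paper does not supply its own proof of Theorem~A: it is quoted from Ritt's 1922 paper as background. What the paper proves is the quasiregular analogue, Theorem~\ref{thm:1}, and there the hypothesis is that $L$ is \emph{strongly automorphic} with respect to a given group $G$ (so $G$ already acts transitively on fibres). That hypothesis lets the paper bypass entirely the analytical step you single out as the crux: once one checks $\psi G\psi^{-1}\subset G$ (your ``$M$ normalises $\Gamma$''), uniqueness in the Schr\"oder equation and the trichotomy of Section~3 finish the forward direction immediately. So there is nothing in the paper to compare your rank-$1$ extension argument against; the paper simply does not need it.

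Your overall architecture --- define the affine symmetry group $\Gamma$, show $M$ normalises it, split on $\operatorname{rank} T$ --- is the classical route and matches the skeleton of the paper's Theorem~\ref{thm:1}. Your converse constructions (the explicit linearizers for $z^d$, $T_d$, and the Latt\`es case, together with the endpoint analysis via $\cos\sqrt{-2z}$) are correct and agree with Example~\ref{ex:1}.

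The genuine gap is exactly where you place it, and your proposed argument does not close it. Casorati--Weierstrass gives density of $g$ on a full punctured neighbourhood of $0$, \emph{not} along the particular sequence $w^{n^{k}}$; so you cannot deduce that the $f$-orbit of $g(w)$ is dense in $\widehat{\C}$. Nor does the existence of an $f$-invariant analytic arc $g(S^{1})$ through $z_0$ in $J(f)$ by itself contradict an essential singularity of $g$ at $0$ or $\infty$: there is no a priori incompatibility between an invariant curve in $J(f)$ and some other orbit being dense. The clean repair is growth-theoretic. From $f\circ L=L\circ M$ with $\deg f=d$ and $|\lambda|>1$ one gets, by iterating and comparing Nevanlinna characteristics, that $L$ has finite order $\rho=\log d/\log|\lambda|$. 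In the rank-$1$ case $\lambda=n\in\Z$; writing $L=g\circ E$ and counting $a$-points of $L$ in a fundamental strip against $a$-points of $g$ in the corresponding annulus, the finite-order bound forces $g$ to have at most a pole at each of $0$ and $\infty$, hence $g$ is rational. This is essentially Ritt's own computation; once you have it, $\deg g\cdot n=d\cdot\deg g$ gives $n=d$, and your Riemann--Hurwitz bookkeeping (with the small extra check that $g(0)\neq g(\infty)$ when the point group is trivial) goes through.
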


\begin{example}
\label{ex:1}
To illustrate why the full converse in the theorem above is not true, let $f$ be the Chebyshev polynomial $f(z) = 2z^2-1$. Then if we linearize about the fixed point $z_0 = -1/2$, we obtain the periodic linearizer $L(z) = \cos(z +2\pi/3)$. However, if we linearize about the fixed point $z_0 =1$, which we note is an endpoint of the Julia set $J(f) = [-1,1]$, then we obtain the linearizer $L(z) =\cos \sqrt{z}$. If we insist on the normalized linearizer, we obtain $L(z) = \cosh \sqrt{2z}$.
\end{example}

This example was brought to our attention in \cite{Steinmetz}, and isn't always taken into account in statements of Ritt's result.

Recall that a holomorphic or meromorphic function $h:\C \to \overline{\C}$ can be singly periodic or doubly periodic, but not triply periodic. In the singly periodic case, the function may be invariant under an extra rotation $z\mapsto -z$, such as $\sin z$, or it may not, such as $e^z$ or $\tan z$. The doubly periodic case corresponds to elliptic functions such as the Weierstrass $\wp$ function. Further rotational invariants depend on the choice of the lattice for $\wp$.

In Theorem A, we can classify the three types of periodic linearizers that arise by calling them of exponential-type, sine-type or $\wp$-type.

\subsection{Ritt in higher dimensions}

One of the aims of the current paper is to generalize this result into higher dimensions and the context of quasiregular mappings. Quasiregular mappings in $\R^n$ provide the natural setting for function theory in higher real dimensions. Roughly speaking, quasiregular mappings are mappings with bounded distortion. They are therefore more flexible than holomorphic mappings, but this flexibility is necessary since the generalized Liouville's Theorem denies the existence of conformal mappings in $\R^n$ for $n\geq 3$ that are not M\"obius mappings. Quasiregular mappings share many properties enjoyed by holomorphic mappings, for example there are versions of the Picard and Montel theorems. We refer the reader interested in the foundations of quasiregular mappings to \cite{IM,Rickman,V1}.

Despite this flexibility, it is a non-trivial matter to construct quasiregular mappings. It is a harder task again to construct the closest cousins of holomorphic mappings, the uniformly quasiregular mappings. These will henceforth be called uqr mappings. These are quasiregular mappings for which there is a uniform bound on the distortion of the iterates. Examples corresponding to the cases in Theorem A have been constructed: power mappings \cite{Mayer1}, Chebyshev mappings \cite{Mayer2} and Latt\`es mappings \cite{Mayer1}. These uqr mappings all arise by solving the Schr\"oder functional equation
\[ f\circ L = L\circ \psi\]
given a strongly automorphic quasiregular mapping $L$ and a uniformly quasiconformal mapping $\psi$ satisfying $\psi G \psi^{-1} \subset G$. Briefly, a strongly automorphic mapping is periodic with respect to a group $G$ which acts transitively on fibres. 
The groups we will consider here are all quasiconformal conjugates of discrete groups of isometries in $\R^n$. We will term such quasiconformal groups {\it tame}.
We will make all this more precise below in sections 2 and 3.

On the other hand, if $f$ is a uqr mapping, $x_0$ is a repelling fixed point of $f$ and $\varphi$ is a generalized derivative of $f$ at $x_0$, then there exists a quasiregular linearizer $L$ satisfying the equation
\[ f\circ L = L \circ \varphi.\]
See \cite{HMM,Fletcher1}. One of the aims of this paper is to study the interplay between the Schr\"oder equation and the linearizer equation. Using these notions, our first main result generalizes the aforementioned theorem of Ritt.

\begin{theorem}
\label{thm:1}
Let $n\geq 2$. Suppose $f:\overline{\R^n} \to \overline{\R^n}$ is a uqr map, $x_0\in \R^n$ is a repelling fixed point of $f$ and a linearizer $L:\R^n \to \overline{\R^n}$ for $f$ at $x_0$ is strongly automorphic with respect to a tame quasiconformal group $G$. Then $f$ is one of:
\begin{enumerate}[(i)]
\item a power-type map;
\item a Chebyshev-type map;
\item a Latt\`es-type map.
\end{enumerate}
Conversely, if $h$ is strongly automorphic with respect to the group $G$, $A$ is a uniformly quasiconformal map satisfying $AGA^{-1}\subset G$ and a uqr solution $f$ of the Schr\"oder equation $f\circ h = h\circ A$ is one of the above three types, then a linearizer $L$ for $f$ at $h(0)$ is strongly automorphic if $0$ is not in the branch set of $h$. If $0$ is in the branch set of $h$, and $\Stab(0)$ is the subgroup of $G$ fixing $0$, then $L \circ P$ is strongly automorphic for some quasiregular map $P$ which is strongly automorphic with respect to $\Stab(0)$.
\end{theorem}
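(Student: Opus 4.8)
The argument splits according to the two implications in the statement.

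For the forward implication, the plan is to convert the linearizer equation $f\circ L = L\circ\varphi$ into an algebraic constraint on $G$. Since $L$ is $G$-invariant, for each $g\in G$ and each $x$ the points $\varphi(g(x))$ and $\varphi(x)$ have the same image under $L$, so strong automorphy yields $\tilde g_x\in G$ with $\varphi\circ g=\tilde g_x\circ\varphi$ near $x$. Tameness lets us realise $G$, after a global quasiconformal conjugacy, as a properly discontinuous group of Euclidean isometries, so $\tilde g_x$ is locally constant in $x$; a connectedness argument over the complement of the nowhere dense sets where $L$ or $\varphi$ branch then forces $\tilde g_x$ to be a single element $\tilde g$, i.e. $\varphi G\varphi^{-1}\subseteq G$. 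Conjugating $G$ to a discrete isometry group $\Gamma$ of $\R^n$ and carrying $L$ and $\varphi$ along, we are reduced to a quasiregular, $\Gamma$-automorphic map $\widetilde L$ together with an expanding uniformly quasiconformal normaliser $\widetilde\varphi$ of $\Gamma$. By Rickman's Picard theorem \cite{Rickman}, $\widetilde L$ omits only finitely many values, so the continuous bijection it induces identifies the underlying space $|\R^n/\Gamma|$ with $\overline{\R^n}$ minus a finite set. Feeding this topological restriction into the classification of admissible automorphy groups carried out earlier (via crystallographic orbifolds and, for $n=3$, the Poincar\'e conjecture), $\Gamma$ must lie in one of three families, and in each family the map induced by $\widetilde\varphi$ on $\R^n/\Gamma$ is, directly from the relevant definition, a power-type, Chebyshev-type or Latt\`es-type map; this is $f$.

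For the converse, set $x_0=h(0)$. Since $A$ fixes $0$ (so that $h(0)$ is fixed by $f$) and is expanding there --- which is guaranteed by the standing hypotheses on the Schr\"oder equation and by $f$ being one of the three listed types --- $x_0$ is a repelling fixed point of $f$, so a linearizer at $x_0$ makes sense. First suppose $0\notin B_h$. Then $h$ is a local homeomorphism near $0$, and its local inverse conjugates $f$ near $x_0$ to $A$ near $0$; hence $A$ is a generalized derivative of $f$ at $x_0$ and $h$ is a corresponding linearizer. (If one prefers the canonical generalized derivative $\varphi$, take a global uniformly quasiconformal $\beta$ with $\beta(0)=0$ conjugating $\varphi$ to $A$ and replace $h$ by $h\circ\beta^{-1}$, which satisfies $f\circ(h\circ\beta^{-1})=(h\circ\beta^{-1})\circ\varphi$.) Since $h$, or $h\circ\beta^{-1}$, is strongly automorphic with respect to $G$, or $\beta G\beta^{-1}$, which is again tame, we obtain a strongly automorphic linearizer.

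The substance is the case $0\in B_h$. Here $h$ has local degree $m=|\Stab(0)|>1$ at $0$; conversely a nontrivial stabilizer forces branching, so the two cases are unified, with $P=\mathrm{id}$ in the first. The plan is to ``unwind'' this branching: using the construction of strongly automorphic maps carried out earlier --- and the fact, which must be checked, that $\Stab(0)$ is a finite group with a spherical quotient, so that $\R^n/\Stab(0)$ is homeomorphic to $\R^n$ --- one builds a quasiregular $P$ strongly automorphic with respect to $\Stab(0)$ with $P(0)=0$ of local degree $m$. Since $\Stab(0)$ is precisely the local deck group of $P$ at $0$ and $h$ is $\Stab(0)$-invariant, $h$ factors near $0$ as $h=L\circ P$ with $L$ quasiregular and locally injective at $0$; one then extends $L$ to all of $\R^n$ by the functional equation, using that $A$, hence the map $\varphi:=\bar A$ induced by $A$ on $\R^n/\Stab(0)$ via $P$ (well defined since $A(0)=0$ forces $A\Stab(0)A^{-1}\subseteq\Stab(0)$), is expanding. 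One checks $\varphi\circ P=P\circ A$, whence $f\circ L=L\circ\varphi$, so $L$ is a linearizer of $f$ at $x_0$; and $L\circ P=h$ globally (both sides solve the Schr\"oder equation for $A$ and agree near $0$), so $L\circ P$ is strongly automorphic with respect to $G$ while $P$ is strongly automorphic with respect to $\Stab(0)$, as required.

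The main obstacle is the branching case of the converse: producing $P$ as a genuine quasiregular --- not merely topological --- strongly automorphic map with the prescribed local degree, and verifying that the resulting $L$ extends globally with $L\circ P=h$ preserved; the structural input one must supply is that $\Stab(0)$ is, up to conjugacy, a finite group with spherical quotient, which is forced by $h$ being a single-valued automorphic map but needs to be spelled out. In the forward direction the only genuinely new local step is the rigidity of the cocycle $g\mapsto\tilde g_x$; the heavy lifting there --- the crystallographic orbifold classification and the appeal to the Poincar\'e conjecture --- is done beforehand.
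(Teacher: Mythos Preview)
Your approach mirrors the paper's closely. In the forward direction both arguments establish $\psi G\psi^{-1}\subset G$ and then recognise the linearizer equation as itself a Schr\"oder equation; the paper dispatches the conclusion in one line (the unique solution of that Schr\"oder equation is, \emph{by definition}, of power-, Chebyshev- or Latt\`es-type), whereas you re-invoke Picard and the orbifold classification, which is unnecessary since that classification is already built into the definitions of the three types. Your local-to-global cocycle argument for $\psi g\psi^{-1}\in G$ is in fact more careful than the paper's one-line appeal to the second automorphy condition, but both reach the same place.

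There is, however, a gap in your converse. The theorem asserts the conclusion for an \emph{arbitrary} linearizer $L$ at $h(0)$, but you only construct a specific one: $h\circ\beta^{-1}$ in the non-branching case, and the $L$ obtained by factoring $h=L\circ P$ locally and extending in the branching case. The paper closes this by invoking \cite[Corollary~3.7]{FM} (if one linearizer at a point is strongly automorphic then every linearizer is) in the non-branching case, and \cite[Theorem~3.1]{FM} (any two linearizers differ by post-composition with a quasiconformal $\xi$, so one takes $P=\xi^{-1}\circ W$ with $W$ the fixed map from Lemma~\ref{lem:winding}) in the branching case. Without these, you have exhibited \emph{some} linearizer with the desired property, not the property for the given $L$. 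A smaller point: your claim that $A$ is itself a generalized derivative of $f$ at $x_0$ is not justified --- generalized derivatives are specific rescaling limits, not arbitrary local conjugates --- though your parenthetical conjugation to an actual generalized derivative $\varphi$ via $\beta$ is exactly how the paper proceeds (using \cite{HM} to conjugate both $A$ and $\varphi$ to $x\mapsto 2x$).
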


The final statement in this theorem deals with the generalization of the phenomenon observed in Example \ref{ex:1}.
We will be more precise about the various definitions here in sections 2 and 3. One of our tasks will be to determine the allowable types of groups $G$ with respect to which $L$ is strongly automorphic. We will see in section 3 that crystallographic groups play a key role here, and in particular those crystallographic groups for which the underlying space of the crystallographic orbifold is a topological sphere. The linearizer $L$ in the three cases corresponds to an analogue of an exponential-type map, sine-type map and $\wp$-type map respectively.

\begin{remark}
A tame quasiconformal group $G$ is quasiconformally conjugate to a discrete group $G'$ of isometries in $\R^n$. As we will see below, work of Martio and Srebro \cite{MS} implies that the translation subgroup of $G'$ must have rank $n-1$ or $n$. On the other hand, there do exist periodic quasiregular mappings in $\R^n$ where the translation subgroup has any rank $k\in \{1,\ldots,n\}$. By work of Martio \cite{Martio}, a generic point has infinitely many pre-images in a fundamental domain for the action of the group if $k\leq n-2$. Consequently, in this case there can be no $f:\overline{\R^n} \to \overline{\R^n}$ having such a map as a linearizer, since every such $f$ has finite degree.
\end{remark}

Next, it follows from Ritt's Theorem that in the holomorphic case, if $L$ is periodic, then the Julia set of $f$ is either all of $\overline{\C}$ in the Latt\`es case, or contained in a generalized circle in the power and Chebyshev cases. In fact, a result of Fatou \cite[Section 43]{Fatou} states that if the Julia set of a rational map is a smooth curve then it is contained in a circle. More recently, it was proved \cite{BE0,EV} that if the Julia set of a rational map is contained in a smooth curve, it is contained in a circle. 

In the quasiregular setting, we cannot hope for exact analogues of such results since we are free to conjugate by quasiconformal mappings. However, Theorem \ref{thm:1} does give the following.

\begin{corollary}
\label{cor:1}
Let $n\geq 2$.
If a quasiregular map $L:\R^n \to \overline{\R^n}$ is strongly automorphic with respect to a tame quasiconformal group $G$ and $A:\R^n \to \R^n$ is a uniformly quasiconformal map satisfying $AGA^{-1} \subset G$, then the Julia set of the uqr solution $f$ of the Schr\"oder equation $f\circ L = L\circ A$ is either all of $\overline{\R^n}$, an $(n-1)$-quasisphere or an $(n-1)$-quasidisk.
\end{corollary}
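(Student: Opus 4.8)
The plan is to deduce the statement from Theorem \ref{thm:1} together with a description of the Julia set attached to each of the three resulting types. In the Schr\"oder setting $A$ fixes $0$ and is expanding, so $f(L(0))=L(A(0))=L(0)$ shows that $L(0)$ is a repelling fixed point of $f$ and $L$ is a linearizer for $f$ there; since $L$ is moreover strongly automorphic with respect to the tame group $G$, Theorem \ref{thm:1} shows that $f$ is power-type, Chebyshev-type or Latt\`es-type. By the definitions given in Sections 2 and 3, each of these types is quasiconformally conjugate, via a homeomorphism $\Phi$ of $\overline{\R^n}$, to a standard $n$-dimensional model map $f_0$ --- a power, Chebyshev or Latt\`es uqr map as in \cite{Mayer1,Mayer2}. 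Since $J(f)=\Phi^{-1}(J(f_0))$ and a quasiconformal homeomorphism of $\overline{\R^n}$ carries $\overline{\R^n}$ to itself, the round sphere $S^{n-1}$ to an $(n-1)$-quasisphere, and a flat closed $(n-1)$-ball to an $(n-1)$-quasidisk, it suffices to identify $J(f_0)$ in each case.

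I would then treat the three cases in turn. For the Latt\`es model the relevant crystallographic group has translation subgroup of rank $n$, so $\R^n/G$ is a compact orbifold with underlying space a sphere, and $f_0$ is the map it inherits from an expanding quasiconformal self-map of $\R^n$; every point is repelling, so $J(f_0)=\overline{\R^n}$. For the power and Chebyshev models the translation subgroup has rank $n-1$, and the descent of the linearizer identifies $\R^n/G$ quasiconformally with $\overline{\R^n}$ with two points, respectively one point, removed, according to whether the point group fixes or interchanges the two ends of the quotient of $\R^n$ by the translation subgroup. In the power case the two omitted points are attracting fixed points of $f_0$, the two components of $\overline{\R^n}\setminus J(f_0)$ are their immediate basins, and $J(f_0)$ is the $(n-1)$-sphere that separates them, so $J(f)$ is an $(n-1)$-quasisphere. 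In the Chebyshev case the single omitted point, which we may place at $\infty$, is an attracting fixed point with connected basin, and $J(f_0)$ is a flat closed $(n-1)$-ball, so $J(f)$ is an $(n-1)$-quasidisk.

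The step I expect to be the main obstacle is the passage to the models: one must know that the notions ``power-type'', ``Chebyshev-type'' and ``Latt\`es-type'' come equipped with a \emph{global} quasiconformal conjugacy of $\overline{\R^n}$ to the standard model, so that the conjugacy genuinely transports quasispheres and flat quasidisks, and one must pin down the topology of the model Julia sets --- in particular that the Chebyshev model has Julia set a codimension-one quasidisk rather than a set of positive or full measure. A subsidiary point, anticipated by the final sentence of Theorem \ref{thm:1}, is the case when $0$ lies in the branch set of $L$: then $L$ is not itself a linearizer and one must pass to $L\circ P$, but this alters neither $f$ nor $J(f)$, so the conclusion is unchanged.
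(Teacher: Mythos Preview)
Your detour through Theorem~\ref{thm:1} is unnecessary and mildly circular. The forward direction of Theorem~\ref{thm:1} starts from a linearizer $L$ and \emph{deduces} $\psi G\psi^{-1}\subset G$; but in Corollary~\ref{cor:1} you are \emph{given} $AGA^{-1}\subset G$ together with the Schr\"oder equation, so the trichotomy power/Chebyshev/Latt\`es is immediate from the definition (which classifies by the rank of the translation subgroup of $G'$ and the presence or absence of an end-switching rotation). You never need $L$ to be a linearizer, and hence the branch-set subtlety you flag at the end never arises.

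The real gap is the sentence ``each of these types is quasiconformally conjugate, via a homeomorphism $\Phi$ of $\overline{\R^n}$, to a standard $n$-dimensional model map $f_0$''. The definitions in Section~3 say only what the automorphy group looks like; they do \emph{not} assert a global conjugacy of $f$ to one of Mayer's specific examples, and no such statement is available to cite --- there are infinitely many admissible crystallographic groups in each dimension, and for each one many choices of $L$ and $A$, producing genuinely different uqr maps. What the paper actually does in the power and Chebyshev cases is to manufacture the conjugacy by hand: after reducing via Lemma~\ref{lem:comp} to the case where $G$ is a discrete isometry group with translations spanning $\{x_n=0\}$, it builds (Propositions~\ref{prop:zor} and~\ref{prop:sine}) an explicit Zorich- or sine-type map $Z_G$, $S_G$ strongly automorphic with respect to the \emph{same} group and sending $\{x_n=0\}$ onto $S^{n-1}$ or $D_{n-1}$; the composition $g=L\circ Z_G^{-1}$ (resp.\ $L\circ S_G^{-1}$) on a fundamental beam then extends to a global quasiconformal map of $\overline{\R^n}$, exhibiting $L(\{x_n=0\})$ as a quasisphere (resp.\ quasidisk). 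Finally one checks directly from $AGA^{-1}\subset G$ that $A$ preserves the sign of $x_n$, so the Schr\"oder equation forces every orbit off $L(\{x_n=0\})$ to the omitted value(s), and $J(f)=L(\{x_n=0\})$. The existence of $Z_G,S_G$ for an arbitrary admissible $G$ is exactly where Theorem~\ref{thm:crystalclass} (the orbifold is spherical) is used. Your sketch assumes the output of this construction as a black box; that is the step you would need to supply.
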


Here, an $(n-1)$-quasisphere is the image of the unit sphere $S^{n-1}$ in $\R^n$ under an ambient quasiconformal map of $\R^n$ and an $(n-1)$-quasidisk is the image of the disk $ D_{n-1} = \{ (x_1,\ldots,x_n) : x_n = 0, x_1^2+ \ldots + x_{n-1}^2 \leq 1 \}$ under an ambient quasiconformal map of $\R^n$.

The methods of proof employed in Corollary \ref{cor:1} can also be used to construct a quasiregular analogue of the rational map $R(z) = (z+1/z)/2$. We observe that this generalizes the equation $\cosh(z) = R(e^z)$.

\begin{theorem}
\label{thm:zz}
Let $U,V \subset \R^n$ be an $(n-1)$-quasisphere and an $(n-1)$-quasidisk respectively. Then there exists a degree two quasiregular map $h_1:\overline{\R^n} \to \overline{\R^n}$ such that $h_1(U) = V$. Moreover, there exists a quasiconformal involution $\rho :\overline{\R^n} \to \overline{\R^n}$ which switches the components of $\overline{\R^n} \setminus U$ and satisfies $h_1 \circ \rho = h_1$.
\end{theorem}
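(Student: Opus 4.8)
The plan is to reduce, exactly as in the proof of Corollary~\ref{cor:1}, to an explicit \emph{model}: a single degree-two quasiregular map $g_0\colon\overline{\R^n}\to\overline{\R^n}$ together with a distinguished round $(n-1)$-sphere $\Sigma_0$, a conformal involution $\iota_0$ and a round $(n-1)$-disk $\Delta_0$, satisfying $g_0(\Sigma_0)=\Delta_0$ and $g_0\circ\iota_0=g_0$, with $\iota_0$ interchanging the two components of $\overline{\R^n}\setminus\Sigma_0$. The general statement then follows by pre- and post-composing $g_0$ with ambient quasiconformal maps carrying $\Sigma_0$ to $U$ and $\Delta_0$ to $V$.

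For the model, write $\R^n=\R^{n-2}\times\R^2$ and identify $\R^2$ with $\C$. Let $g_0$ be the degree-two winding map $g_0(u,v)=(u,\,v^2/|v|)$ for $v\neq 0$, with $g_0(u,0)=(u,0)$ and $g_0(\infty)=\infty$. One checks that $g_0$ is positively homogeneous of degree one and preserves the norm, hence commutes with the inversion $x\mapsto x/|x|^2$, and, together with the routine computation that away from $\{v=0\}$ its differential has distortion bounded by $2$, that $g_0$ extends to a quasiregular self-map of $\overline{\R^n}$; see \cite{Rickman}. Its deck involution is $\iota_0(u,v)=(u,-v)$, an orthogonal, hence conformal, map of order two with $g_0\circ\iota_0=g_0$. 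Take $\Sigma_0=\{\,\Im v=0\,\}\cup\{\infty\}$, a hyperplane together with $\infty$ and hence a round $(n-1)$-sphere; then $\iota_0$ interchanges the two half-spaces bounded by $\Sigma_0$, and $g_0(\Sigma_0)=(\R^{n-2}\times[0,\infty)\times\{0\})\cup\{\infty\}=:\Delta_0$. Since $\partial\Delta_0=(\R^{n-2}\times\{0\}\times\{0\})\cup\{\infty\}$ is a round $(n-2)$-sphere inside the round $(n-1)$-sphere $\Sigma_0$, and since $\Delta_0$ and $D_{n-1}$ are each one of the two $(n-1)$-disks cut off in a round $(n-1)$-sphere by a round $(n-2)$-sphere, there is a Möbius transformation of $\overline{\R^n}$ taking $(\Sigma_0,\Delta_0)$ to $(S^{n-1},D_{n-1})$; in particular $\Sigma_0$ is an $(n-1)$-quasisphere and $\Delta_0$ an $(n-1)$-quasidisk.

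To finish, let $U$ be an $(n-1)$-quasisphere and $V$ an $(n-1)$-quasidisk. Since any two $(n-1)$-quasispheres (resp.\ $(n-1)$-quasidisks) differ by an ambient quasiconformal map, there are sense-preserving ambient quasiconformal maps $\phi,\psi\colon\overline{\R^n}\to\overline{\R^n}$ with $\phi(\Sigma_0)=U$ and $\psi(\Delta_0)=V$. Put $h_1=\psi\circ g_0\circ\phi^{-1}$ and $\rho=\phi\circ\iota_0\circ\phi^{-1}$. Then $h_1$ is quasiregular of degree $1\cdot 2\cdot 1=2$ and $h_1(U)=\psi(g_0(\Sigma_0))=\psi(\Delta_0)=V$; meanwhile $\rho$ is quasiconformal with $\rho^2=\mathrm{id}$, it interchanges the two components of $\overline{\R^n}\setminus U=\phi(\overline{\R^n}\setminus\Sigma_0)$ because $\iota_0$ does so for $\Sigma_0$, and $h_1\circ\rho=\psi\circ g_0\circ\iota_0\circ\phi^{-1}=\psi\circ g_0\circ\phi^{-1}=h_1$, as required.

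The only point requiring genuine care is the construction of the model $g_0$. The naive ``holomorphic'' fold $(u,v)\mapsto(u,v^2)$ is \emph{not} quasiregular when $n\geq 3$, since near its branch set the $v$-directions are crushed relative to the $n-2$ remaining directions and the distortion blows up; the winding map $(u,v)\mapsto(u,v^2/|v|)$ is the correct replacement, and the main technical step is verifying that it genuinely extends across its branch locus $\{v=0\}$ and across $\infty$ as a quasiregular mapping — this is where the boundedness of the distortion is used. The rest is the bookkeeping of quasiconformal conjugation already carried out for Corollary~\ref{cor:1}.
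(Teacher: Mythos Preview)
Your argument is correct and takes a more elementary route than the paper's. The paper builds its model as $h_1 = S_G \circ Z_G^{-1}$, composing the sine-type and Zorich-type maps of Propositions~\ref{prop:zor} and~\ref{prop:sine}, with deck involution $I = Z_G \circ R \circ Z_G^{-1}$; this is the higher-dimensional analogue of the identity $\tfrac12(z+1/z) = \cosh(\log z)$ and plugs directly into the strongly automorphic machinery used later in the paper (Lemma~\ref{lem:getp}, Proposition~\ref{prop:branch}). You instead use the bare winding map $(u,v)\mapsto(u,v^2/|v|)$ on $\R^{n-2}\times\C$ with the linear involution $(u,v)\mapsto(u,-v)$, which makes the distortion bound, the branch set, and the extension across $\infty$ (via commutation with $x\mapsto x/|x|^2$) all explicit and bypasses the Zorich/sine constructions entirely. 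The paper's construction buys compatibility with the rest of Section~5 and Section~6; yours buys a self-contained proof of the theorem as a stand-alone statement.

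One small slip in your write-up: a single M\"obius map cannot carry $(\Sigma_0,\Delta_0)$ to $(S^{n-1},D_{n-1})$ simultaneously, since $\Delta_0\subset\Sigma_0$ while $D_{n-1}\not\subset S^{n-1}$. What you actually need, and what your own reasoning gives, is a M\"obius map taking $\Delta_0$ to $D_{n-1}$ (both being caps cut from round $(n-1)$-spheres by round $(n-2)$-spheres), which already shows $\Delta_0$ is a quasidisk; that $\Sigma_0$ is a quasisphere is immediate. This does not affect the validity of the proof.
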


\begin{remark}
\begin{enumerate}[(i)]
\item There is plenty of flexibility in this construction and by some judicious choices, one can obtain a quasiregular map $h_1:\overline{\R^3} \to \overline{\R^3}$ which extends the rational map $(z+1/z)/2$. Clearly then one can obtain a quasiregular extension of $z+1/z$, however we do not know if this extension is uqr, c.f. a question of Martin in \cite{Martin}.
\item The rational map $z+1/z$ is called the Joukowsky transform in the context of aerodynamics and solutions of the potential flow over an aerofoil shape. It could be of interest to investigate potential applications of higher dimensional quasiregular versions of the Joukowsky transform.
\item While there is no guarantee that the distortion of this map is smaller than the degree, $2$, a modification of the construction allows one to increase the degree without increasing the distortion. This yields an analogue of $(z^d + 1/z^d)/2$. The point here is that one could study the dynamics of this mapping since a Julia set is defined only when the degree is larger than the distortion (see \cite{Berg1}).
\end{enumerate}
\end{remark}

\subsection{Dynamics on quasiballs}

In \cite{EV} a classification is given of all rational maps whose Julia set is contained in a circle. One may therefore ask to what extent Corollary \ref{cor:1} has a converse. Unfortunately a complete classification of uqr mappings which have Julia set equal to, for example, $S^{n-1}$ is out of reach since a uqr map may be modified on a Fatou component without changing the dynamics on the Julia set. See the argument in Proposition \ref{prop:large} for an example of such a modification. In this direction, it was proved in \cite{MM} that if the Julia set of a uqr map is $S^{n-1}$ and agrees with the set of conical points $\Lambda(f)$ of $f$, that is the set of points where a linearization can be performed, then the restriction of $f$ to the sphere $J(f)$ is a Latt\`es-type map. We give an extension of this result to cover the three cases in Corollary \ref{cor:1}.

\begin{theorem}
\label{thm:2}
If $f:\overline{\R^n} \to \overline{\R^n}$ is a uqr map with Julia set either all of $\overline{\R^n}$, for $n\geq 3$, or an $(n-1)$-quasi-sphere or an $(n-1)$-quasi-disk with $n\geq 4$ and $\Lambda (f) = J(f)$, then $f$ restricted to its Julia set agrees with either a Latt\`es-type map, a power-type map or a Chebyshev-type map respectively.
\end{theorem}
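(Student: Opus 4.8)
My plan is to reduce each of the three cases to the following rigidity statement, which is essentially the content of \cite{MM}: a uqr self-map $h$ of $\overline{\R^m}$ with $J(h)=\overline{\R^m}=\Lambda(h)$ is a Latt\`es-type map. This is proved in \cite{MM} for $m=n-1$, but by an argument that is insensitive to the value of $m$ once $m\geq 3$: one takes a repelling periodic point (conical by hypothesis), forms a linearizer $L$ for the corresponding iterate, shows $L$ is strongly automorphic with respect to a tame quasiconformal group, and concludes via the orbifold classification of section~3 (equivalently, via Theorem~\ref{thm:1}) that $h$ is Latt\`es-type. Throughout I use that $J(f)$ is completely invariant and is the closure of the repelling periodic points, and that being conical and being of ``power-/Chebyshev-/Latt\`es-type'' are quasiconformal conjugacy invariants; the hypothesis $\Lambda(f)=J(f)$ supplies exactly the everywhere-conical expansion that makes such rigidity possible. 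The first case, $J(f)=\overline{\R^n}$ with $n\geq 3$, is then the rigidity statement applied directly with $m=n$, so $f$ is a Latt\`es-type map.

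For the remaining cases, with $n\geq 4$, conjugate $f$ by the ambient quasiconformal map that straightens $J(f)$, so that $J(f)$ is the round $S^{n-1}$, respectively the round codimension-one disk $D_{n-1}$; the conjugated map is still uqr and still satisfies $\Lambda=J$. Since a quasiconformal self-map of $\overline{\R^n}$ preserving $J(f)$ restricts to a quasiconformal self-map of $J(f)$ (boundary values of quasiconformal maps are quasisymmetric), $g:=f|_{J(f)}$ is a uqr self-map of $S^{n-1}\cong\overline{\R^{n-1}}$, respectively of $D_{n-1}$, with $J(g)=J(f)$ and $\Lambda(g)=J(f)$; the latter equalities need short arguments, the conical one using that the uniformly quasiconformal linearizing charts of $f$ carry $J(f)$ onto pieces of a quasisphere of definite size. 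In the quasisphere case, $g$ transports to a uqr self-map of $\overline{\R^{n-1}}$, with $n-1\geq 3$, having full Julia set equal to its conical set, hence by the rigidity statement $g$ is Latt\`es-type on $\overline{\R^{n-1}}$; and a Latt\`es-type map of $\overline{\R^{n-1}}$ is precisely the restriction of a power-type map of $\overline{\R^n}$ to its Julia sphere --- restricting the Zorich-type linearizer of such a power map (see \cite{Mayer1}) and its Schr\"oder equation to the invariant hyperplane produces exactly a Latt\`es functional equation whose developing map is periodic with respect to a crystallographic group of $\R^{n-1}$ with spherical orbifold. In the quasidisk case, let $\pi\colon S^{n-1}\to D_{n-1}$ be the two-fold cover branched over the equatorial quasisphere, with deck involution $\iota$ the equatorial reflection. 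The topological manifold-boundary of the quasidisk $J(f)$ is $g$-invariant, so $g$ lifts to an $\iota$-equivariant uqr self-map $\widehat g$ of $S^{n-1}\cong\overline{\R^{n-1}}$ with $J(\widehat g)=\pi^{-1}(J(g))=S^{n-1}=\Lambda(\widehat g)$; by the quasisphere case $\widehat g$ is a power-type map restricted to $S^{n-1}$, and quotienting by $\iota$ realizes $g$ as that power-type map folded along its equatorial quasisphere, which is the restriction of a Chebyshev-type map to its Julia quasidisk --- the higher-dimensional analogue of $\cosh z=R(e^z)$, cf.\ Theorem~\ref{thm:zz}. Passing to the double cover also sidesteps Example~\ref{ex:1}: the periodic points relevant on $S^{n-1}$ are ordinary points of a closed manifold, never endpoints.

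The main obstacle is internal to the rigidity statement: producing \emph{some} deck group for the linearizer $L$ out of the everywhere-conical hypothesis is comparatively soft, but showing this group is discrete and quasiconformally conjugate to a group of isometries --- so that Theorem~\ref{thm:1} and the orbifold classification apply --- is the crux, and is where the invariant measurable conformal structure of $f$ (see \cite{IM}), the geometry of conical limits, and the Poincar\'e-conjecture-flavoured analysis of section~3 all enter. The subsidiary obstacles, needed to make ``$f|_{J(f)}$ agrees with a \ldots-type map'' precise and robust, are: the $g$-invariance of the manifold-boundary of the quasidisk and the fact that $g$ lifts through the branched double cover to a genuinely uqr map with the correct Julia and conical sets; and the verification that conicality transfers to restrictions and to lifts. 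These points, together with the need for the rigidity argument to run in dimension $m\geq 3$ throughout, are exactly what force the hypotheses $n\geq 3$ in the first case and $n\geq 4$ in the other two; in lower dimensions the orbifold input is unavailable.
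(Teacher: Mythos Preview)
Your outline follows the paper's strategy: case (i) is \cite[Theorem 1.3]{MM} directly; case (ii) straightens, restricts to $S^{n-1}\cong\overline{\R^{n-1}}$, applies the rigidity from \cite{MM}, and recognises a Latt\`es map on the sphere as the restriction of a power-type map (the paper extends the $\wp$-type linearizer to a Zorich-type one exactly as you indicate); and case (iii) lifts through a branched double cover to reduce to the sphere case. The paper performs this last lift ambiently, constructing a uqr $P:\overline{\R^n}\to\overline{\R^n}$ semiconjugate to $f$ via the degree-two map $h_1$ of Theorem~\ref{thm:zz} (this is Lemma~\ref{lem:getp}), but restricted to the Julia sets $h_1|_{S^{n-1}}$ is precisely your branched cover $\pi:S^{n-1}\to D_{n-1}$, so the two pictures coincide. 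The ambient formulation does buy something: quasiregularity of $P$ follows immediately from the semiconjugacy $h_1\circ P=f\circ h_1$, whereas your intrinsic lift $\widehat g$ needs a separate continuity/distortion argument across the equator (compare the careful case analysis in the proof of Lemma~\ref{lem:getp}).

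There is, however, a real gap in your case (iii). You claim $\Lambda(\widehat g)=S^{n-1}$, but conicality does not pull back through the branched cover at points of the branch locus: at the equator $\pi$ has local degree $2$, so a Zalcman rescaling for $\widehat g$ at such a point cannot be read off from one for $f$ at the image via Lemma~\ref{lem:dist}, which needs local injectivity. The paper confronts exactly this: using Proposition~\ref{prop:branch} and Lemma~\ref{lem:dist} it obtains only that $\Lambda(P)$ contains an \emph{open dense} subset of $S^{n-1}$, namely the complement of $B_{h_1}\cap S^{n-1}$, and then closes the argument by invoking \cite[Theorem 6.1]{MM}, which requires only a.e.\ continuity of the invariant measurable conformal structure $\mu|_{S^{n-1}}$ rather than $\Lambda=J$ everywhere. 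Without this weakening and the measurability step your argument does not conclude. A secondary point: ``boundary values of quasiconformal maps are quasisymmetric'' justifies restriction for injective maps only; for non-injective quasiregular $f$ one needs the linear-distortion bound on the invariant sphere together with orientation preservation (compare the homology argument in Lemma~\ref{lem:dw4}), or else the direct citation of \cite[Corollary 6.2]{MM} that the paper uses.
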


The remaining cases here, that of the $2$-quasisphere or $2$-quasidisk, run into the problem of determining those rational maps with Julia set equal to $\overline{\C}$ and having a uqr extension to $\overline{\R^3}$. This remains an open question.

While a complete classification of uqr maps which have $S^{n-1}$ as their Julia set is not possible, one can still ask for a classification of the dynamical behaviour of a uqr map in $\R^n$ which has a ball as a completely invariant set or, more generally, the behaviour of the iterates of a uqr map $f:\B^n \to \B^n$, where $\B^n$ denotes the open unit ball in $\R^n$. In the unit disk $\D \subset \C$, a complete answer to this question is given by the Denjoy-Wolff Theorem: either $f$ is an elliptic M\"obius map and the iterates form a semi-group of automorphisms of $\D$, or there is a unique $z_0 \in \overline{\D}$ so that the iterates of $f$ converge uniformly on compact subsets of $\D$ to $z_0$.

Various generalizations of the Denjoy-Wolff Theorem abound, for example in the setting of Hilbert spaces and Banach spaces, picking just one recent example \cite{LLNW}. However, as far as the authors are aware, all generalizations of the Denjoy-Wolff Theorem rely on the existence of a metric with respect to which the mapping is non-expansive. In the uqr setting, this presents an issue since uqr mappings can increase hyperbolic distances by an arbitrarily large factor, see Proposition \ref{prop:large}. In dimension two, every uqr map is conjugate to a holomorphic map by a result of Sullivan \cite{Sullivan} and Tukia \cite{Tukia}. Consequently, the Denjoy-Wolff Theorem for uqr maps in dimension two follows easily. 

Roughly speaking, a uqr map in dimension two turns out to be non-expansive with respect to a metric obtained by modifying the hyperbolic metric through a quasiconformal map arising from a solution of the Beltrami differential equation. This latter property no longer holds in higher dimensions. More precisely, while every uqr map in $\R^n$ for $n \geq 3$ is rational with respect to some invariant conformal structure, this structure cannot necessarily be integrated to give a quasiconformal map, see for example \cite{IM}. It is therefore not clear if there is a suitable metric with respect to which a uqr map in $\B^n$ for $n\geq 3$ is non-expansive.

In light of this, it is somewhat surprising that we are able to obtain the following version of the Denjoy-Wolff Theorem in dimension $3$ which is, as far as we are aware, the first version of the Denjoy-Wolff Theorem which does not directly use non-expansiveness with respect to a metric.

\begin{theorem}
\label{thm:3}
Let $f:\B^3 \to \B^3$ be a surjective, proper uqr map. Then either the family of iterates of $f$ forms a semi-group of automorphisms of $\B^3$, or there exists a unique $x_0 \in \overline{\B^3}$ such that the iterates of $f$ converge uniformly on compact subsets of $\B^3$ to $x_0$.
\end{theorem}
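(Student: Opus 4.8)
The plan is to split on the local degree $d$ of $f$, which is finite since $f$ is proper. If $d=1$, then $f$ is a proper degree-one quasiregular self-map of $\B^3$, hence a homeomorphism and therefore quasiconformal, so the family of iterates of $f$ forms a semi-group of quasiconformal automorphisms of $\B^3$ and the first alternative holds. So assume $d\ge 2$; the aim is to show the iterates converge locally uniformly to a single point of $\overline{\B^3}$. Since every $f^n$ maps $\B^3$ into $\B^3$, the family $\{f^n\}$ omits $\overline{\R^3}\setminus\overline{\B^3}$, a set of positive conformal capacity, so by the quasiregular Montel theorem $\{f^n\}$ is normal on $\B^3$. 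For the later steps I would extend $f$ by inversion in $S^2$ to a uqr map $\hat f\colon\overline{\R^3}\to\overline{\R^3}$ of degree $d$, which restricts to the continuous boundary extension of $f$ and for which $\B^3$ and its exterior are completely invariant; then $\B^3$ lies in a completely invariant Fatou component $\Omega$ of $\hat f$, with $\partial\Omega\subseteq S^2$, on which $\hat f$ is proper of degree $d$, and $\Omega$ is simply connected.

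The next step is to show every locally uniform subsequential limit $g$ of $\{f^n\}$ on $\Omega$ is constant. Otherwise $g\colon\Omega\to\Omega$ is nonconstant quasiregular; passing to a subsequence so that $f^{n_{k+1}-n_k}\to h$ and using $f^{n_{k+1}}=f^{n_{k+1}-n_k}\circ f^{n_k}$ gives $g=h\circ g$, so $h=\mathrm{id}$ on the open set $g(\Omega)$, hence $h=\mathrm{id}$ everywhere by unique continuation; since $d\ge 2$ the gaps cannot stay bounded, so $f^{m_k}\to\mathrm{id}$ with $m_k\to\infty$. But $\hat f|_\Omega$, being a proper self-map of degree $d\ge 2$ of a simply connected domain, has nonempty branch set $B$ (an unbranched proper self-map of a simply connected domain is a homeomorphism), and $B\subseteq B_{f^{m_k}}$ for all $k$; this contradicts the Hurwitz-type fact that branch points of a locally uniformly convergent sequence of uniformly quasiregular maps cannot cluster where the limit is a local homeomorphism. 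Hence the subsequential limits are constants, with values in a closed set $\mathcal C\subseteq\overline\Omega$ satisfying $\hat f(\mathcal C)=\mathcal C$ (preimages being supplied by normality), and $\mathcal C$ is the $\omega$-limit set of every point of $\Omega$; moreover, from $f^{n_k+1}=f\circ f^{n_k}$ and continuity, any point of $\mathcal C$ lying in $\Omega$ is fixed by $\hat f$.

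The key consequence is that a fixed point of $\hat f$ in $\Omega$ forces convergence: if $\hat f(x_0)=x_0$ with $x_0\in\Omega$, then every subsequential limit $g\equiv c$ satisfies $c=\lim_k f^{n_k}(x_0)=x_0$, so $f^n\to x_0$ locally uniformly. By the Brouwer fixed point theorem, the continuous extension of $f$ has a fixed point in $\overline{\B^3}$; if it lies in $\Omega$ (in particular if it lies in $\B^3$), we are done. It therefore remains to treat the case where $\hat f$ has no fixed point in $\Omega$; then $\mathcal C\subseteq\partial\Omega=J(\hat f)\subseteq S^2$, the iterates escape to the boundary, and what must be shown is that $\mathcal C$ is a single point.

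For this case I would pass to the two-dimensional boundary: the restriction $\hat f|_{S^2}$ is uniformly quasiregular on $S^2$, so by the Sullivan--Tukia theorem in the plane it is quasiconformally conjugate there to a rational map $R$ of degree $d$, under which the traces on $S^2$ of $\Omega$ and of its mirror component become completely invariant subsets of the Fatou set of $R$. Analysing $R$ by classical Fatou theory and the classical Denjoy--Wolff theorem then identifies the candidate boundary point $x_0$ to which the boundary iterates converge; it is necessarily a parabolic fixed point of $\hat f$, since a boundary fixed point that attracts orbits from $\Omega$ cannot be repelling. The final step is to propagate this inwards, showing $\mathcal C=\{x_0\}$, and here one must replace the $f$-invariant horoballs of Wolff's proof, which are built from the Schwarz--Pick inequality and are unavailable because a uqr map can expand the hyperbolic metric of $\B^3$ by an arbitrarily large factor (Proposition \ref{prop:large}). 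The substitute I would aim for is a trapping argument at the parabolic boundary fixed point: over a sufficiently small petal-shaped neighbourhood of $x_0$ the distortion of $\hat f$ is controlled, so such a neighbourhood is carried into itself and any orbit in $\B^3$ that enters it stays, whence the orbit can accumulate at only one point. Making this last argument work in full --- allowing branch points and neutral multipliers at the boundary fixed points, and dealing with the sub-case $J(\hat f)=S^2$, where $\Omega=\B^3$ and the reduction to boundary dynamics is less direct --- is the main obstacle.
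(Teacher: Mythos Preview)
Your approach shares with the paper the extension to $S^2$ and the reduction of boundary dynamics to a rational map via Sullivan--Tukia, but it diverges at the decisive step, and that is where the gap lies. You correctly isolate the hard case --- all subsequential limits on $S^2$ --- and you concede that your proposed trapping/petal argument at the boundary fixed point is incomplete. This is not a technicality to be patched later: there is no local normal form for a uqr map near a neutral boundary fixed point in dimension three, so building an invariant petal is genuinely out of reach with current tools. Your invocation of the classical Denjoy--Wolff theorem on the boundary is also off target, since $f|_{S^2}$ is a rational map of the whole sphere, not a self-map of a disk; classical Fatou theory locates candidate fixed cycles but does not by itself pin down the $\omega$-limit of interior orbits.

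The paper bypasses all local analysis with a global argument you have not found. Suppose two distinct boundary points $x_0,x_1$ both arise as subsequential limits of $(f^m(0))$. Although $f$ can expand hyperbolic distance (your citation of Proposition~\ref{prop:large} is apt), it is nevertheless \emph{coarsely} Lipschitz: $d_h(f(x),f(y))\le K_I(f)\bigl(d_h(x,y)+\log 4\bigr)$ by a result of Vuorinen. Applied to consecutive iterates of the origin this gives a uniform bound $d_h(f^{m+1}(0),f^m(0))\le M$, hence $|f^{m+1}(0)-f^m(0)|\to 0$ in the Euclidean metric as the orbit escapes to $S^2$. A connectedness argument then shows the set of boundary limit points is a continuum $C\subset S^2$, and every point of $C$ is fixed by the boundary extension of $f$. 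The conjugate rational map would then have a continuum of fixed points, contradicting the Identity Theorem. This delivers uniqueness of the Denjoy--Wolff point without horoballs, petals, or any parabolic local analysis --- exactly the missing idea in your outline.
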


In view of this result, we call $x_0$ the Denjoy-Wolff point of $f$. Clearly, we may replace $\B^3$ by any quasiball in $\R^3$.
Using the fact that if a Fatou component $U$ of a uqr map contains an attracting fixed point then $\partial U = J(f)$, we immediately have the following corollary.

\begin{corollary}
\label{cor:2}
Let $f:\R^3 \to\R^3$ be a non-injective uqr map. If $U$ is a completely invariant quasiball, then $J(f) \subset \partial U$. If the Denjoy-Wolff point $x_0$ of $f|_U$ is contained in $U$, then $J(f) = \partial U$, whereas if $x_0 \in \partial U$ and is an attracting fixed point, then $J(f)$ is a proper subset of $\partial U$.
\end{corollary}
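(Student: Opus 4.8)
The plan is to deduce Corollary~\ref{cor:2} from Theorem~\ref{thm:3} together with the quoted fact that the boundary of a Fatou component of a uqr map containing an attracting fixed point equals the Julia set; no further machinery should be required. Regarding $f$ as a uqr self-map of $\overline{\R^3}$, I would first check that the restriction $f|_U\colon U\to U$ satisfies the hypotheses of Theorem~\ref{thm:3}. Complete invariance of $U$ and surjectivity of $f$ give $f(U)=f(f^{-1}(U))=U$; for compact $C\subset U$ one has $(f|_U)^{-1}(C)=f^{-1}(C)$ (since $f^{-1}(C)\subset f^{-1}(U)=U$), a compact subset of $U$, so $f|_U$ is proper; and $f|_U$ is uqr because its iterates are restrictions of those of $f$. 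By Theorem~\ref{thm:3} applied to the quasiball $U$, either the iterates of $f|_U$ form a semigroup of automorphisms of $U$, or there is a Denjoy--Wolff point $x_0\in\overline U$ with $f^n\to x_0$ locally uniformly on $U$. The first case is impossible: an automorphism restriction would force $\deg f=1$ (every point of $U$ has its whole preimage in $U$, hence a single preimage), contradicting non-injectivity of $f$. So a Denjoy--Wolff point $x_0$ exists.

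Next I would prove $J(f)\subset\partial U$, which holds wherever $x_0$ sits. Let $V=\overline{\R^3}\setminus\overline U$ be the complementary quasiball, so $\overline{\R^3}=U\cup\partial U\cup V$ with $U,V$ nonempty, open and connected. Since $f$ is open and continuous, $f^{-1}(\overline U)=\overline{f^{-1}(U)}=\overline U$, hence $f^{-1}(\partial U)=\partial U$ and $f^{-1}(V)=V$; in particular $f(U)\subset U$ and $f(V)\subset V$. Thus the iterates $\{f^n\}$ omit the nonempty open set $V$ on $U$, and omit $U$ on $V$, so the quasiregular Montel theorem gives $U\cup V\subset F(f)$, whence $J(f)\subset\overline{\R^3}\setminus(U\cup V)=\partial U$.

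For the dichotomy, if $x_0\in U$ then $f(x_0)=x_0$ and, since $f^n\to x_0$ uniformly near $x_0$, this fixed point is attracting. Let $W$ be the Fatou component containing the connected set $U\subset F(f)$. If some $z\in W$ lay on $\partial U$, then $f^n(z)\in\partial U$ for all $n$ by forward invariance of $\partial U$, while $f^n(z)\to x_0$ because $z$ and $x_0$ lie in the same Fatou component, whence $x_0\in\partial U$ since $\partial U$ is closed --- contradicting $x_0\in U$. So $W\cap\partial U=\emptyset$, $U$ is closed and open in $W$, and $U=W$. Then $U$ is a Fatou component with an attracting fixed point, and the quoted fact gives $\partial U=J(f)$. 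If instead $x_0\in\partial U$ is an attracting fixed point, then $x_0\in F(f)$, so $x_0\notin J(f)$; together with $J(f)\subset\partial U$ this forces $J(f)$ to be a proper subset of $\partial U$.

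I expect no real obstacle here, the proof being essentially an assembly of Theorem~\ref{thm:3}, the attracting-fixed-point fact and the quasiregular Montel theorem. The one step needing care is the reduction in the case $x_0\in U$: the quoted fact applies to a whole Fatou component, so one must verify that the completely invariant quasiball $U$ is not strictly contained in a larger Fatou component, and this is precisely what forward invariance of $\partial U$ and the locally uniform convergence of the iterates deliver.
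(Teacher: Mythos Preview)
Your proof is correct and follows precisely the route the paper intends: the paper does not give a separate proof of this corollary, merely remarking before its statement that it follows immediately from Theorem~\ref{thm:3} together with the fact that a Fatou component containing an attracting fixed point has boundary equal to the Julia set. Your argument is simply a careful unpacking of that sentence, including the verification (which the paper leaves implicit) that $U$ is genuinely a Fatou component rather than a proper subset of one.
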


\begin{remark}
\begin{enumerate}[(i)]
\item Our method takes advantage of the fact that the hypotheses imply $f$ extends to $S^2$, the restriction to $S^2$ is a uqr map and hence is conjugate to a rational map. We are therefore unable to drop surjectivity and properness from the assumptions.
\item Martin \cite{Martin} gave some severe non-existence results on the rational maps which have a uqr extension to $\B^3$. This therefore restricts the possibilities in Theorem \ref{thm:3}. For example, the extension of $f$ to $S^2$ cannot have a superattracting cycle, a cycle of a Siegel disk or Herman ring, nor two cycles which are attracting or rationally indifferent.
\item Our method could be extended to much more generality if the following question of Aimo Hinkkanen could be answered in the negative: can a non-injective uqr map ever have a continuum (consisting of more than one point) of fixed points? The main results in this direction are contained in \cite{Martin02}. 
\item Power-type mappings give examples where the Denjoy-Wolff point is in $\B^3$, but the authors are unaware of any examples where the Denjoy-Wolff point is in $\partial \B^3$. Do uqr analogues of hyperbolic or parabolic Blaschke products exist?
\item In the case where the Denjoy-Wolff point is in the boundary of a completely invariant quasiball, and is also attracting, must $J(f)$ be a Cantor subset of $\partial U$? Recall that for a Blaschke product $B:\overline{\C} \to \overline{\C}$ which has $\D$ as a completely invariant domain, $J(B)$ is either the unit circle, or a Cantor subset of it.
\end{enumerate}
\end{remark}

\subsection{Organisation of the paper}

The paper is organised as follows. In section 2, we recall some basic facts about quasiregular mappings, quasiregular dynamics  and crystallographic groups. In section 3, we will analyze the Schr\"oder equation and classify the sorts of crystallographic groups that can occur for strongly automorphic quasiregular mappings. In section 4, we will prove Theorem \ref{thm:1} and Corollary \ref{cor:1}. In section 5, we will point out how the methods used in the proof of Theorem \ref{thm:1} also allow one to construct a quasiregular version of the rational map $(z+1/z)/2$ and prove Theorem \ref{thm:zz}. In section 6, we will prove Theorem \ref{thm:2} and finally in section 7 we will prove Theorem \ref{thm:3}.

The authors wish to thank Aimo Hinkkanen for interesting conversations, Dan Grubb for help formulating and proving the topological result in Lemma \ref{lem:dw4} and in particular Dan Nicks for thoroughly reading a previous draft.

\section{Preliminaries}

Throughout, we will denote by $B(x_0,r)$ the open Euclidean ball of radius $r>0$ centred at $x_0 \in\R^n$. We will denote by $x=(x_1,\ldots, x_n)$ an element of $\R^n$. The standard unit basis vectors in $\R^n$ are denoted by $e_1,\ldots, e_n$.

\subsection{Quasiregular mappings}

A {\it quasiregular mapping} in a domain $U\subset \R^n$ for $n\geq 2$ is a continuous mapping in the Sobolev space $W^1_{n,loc}(U)$ where there is a uniform bound on the distortion, that is, there exists $K\geq 1$ such that
\[|f'(x)|^n \leq KJ_f(x)\]
almost everywhere in $U$. The minimum such $K$ for which this inequality holds is called the {\it outer dilatation} and denoted by $K_O(f)$. As a consequence of this, there is also $K' \geq 1$ such that 
\[J_f(x) \leq K' \inf_{|h|=1}|f'(x)h|^n\]
holds almost everywhere in $U$. The minimum such $K'$ for which this inequality holds is called the {\it inner dilatation} and denoted by $K_I(f)$. If $K= \max \{K_O(f), K_I(f) \}$, then $K=K(f)$ is the {\it maximal dilatation} of $f$. A $K$-quasiregular mapping is a quasiregular mapping for which $K(f) \leq K$. 
The set of points where a quasiregular mapping $f$ is not locally injective is called the branch set, and denoted $B_f$.
An injective quasiregular mapping is called quasiconformal. A quasiregular mapping with poles is sometimes called quasimeromorphic, but we will retain the nomenclature quasiregular (c.f. quasiregular mappings between manifolds, where here the manifolds are $\R^n$ and $S^n$).

We refer to \cite{Rickman} for many more details on the foundations of quasiregular mappings, but we note here that a quasiregular mapping is open, discrete and orientation preserving. We will also use the fact that quasiregular mappings have bounded linear distortion. If we let 
\[L(x_0,r,f) := \max_{|x-x_0|=r}|f(x)-f(x_0)| \text{ and }l(x_0,r,f):= \min_{|x-x_0|=r}|f(x)-f(x_0)|,\]
then the {\it linear distortion} of $f$ at $x_0$ is 
\[ H(x_0,f) = \limsup_{r \to 0} \frac{ L(x_0,r,f) }{l(x_0,r,f)}.\]

The fact that the linear distortion is uniformly bounded for quasiregular mappings is important for numerous applications.

\begin{theorem}[\cite{Rickman}, Theorem II.4.3]
\label{thm:lindist}
There exists a constant $C$ depending only on the dimension $n$ and the product $i(x,f)K_O(f)$ of the local index of $f$ at $x$ and the outer distortion so that
\[ H(x,f) \leq C.\]
\end{theorem}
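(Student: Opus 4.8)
The plan is to bound the ratio $L(x,r,f)/l(x,r,f)$ for small $r>0$ by a constant of the stated form and then take $\limsup_{r\to0}$. After a translation assume $x=0$ and $f(0)=0$, and for small $r$ choose $a,b\in S^{n-1}(0,r)$ with $|f(a)|=l:=l(0,r,f)$ and $|f(b)|=L:=L(0,r,f)$. The first ingredient I would record is the geometric sandwich
\[
B(0,l)\subseteq f(B(0,r))\subseteq B(0,L),
\]
where $\partial f(B(0,r))$ meets $S^{n-1}(0,l)$ at $f(a)$ and meets $S^{n-1}(0,L)$ at $f(b)$; the first inclusion comes from openness, discreteness and a degree argument (the degree of $f$ at $0$ is $i(0,f)\ge1$, and $f(S^{n-1}(0,r))$ is disjoint from $B(0,l)$), the second from the maximum modulus principle. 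The same holds for the normal domains $U(0,f,s)$ (the $0$-component of $f^{-1}(B(0,s))$), whose in- and out-radii I denote $l^*(0,s,f)$ and $L^*(0,s,f)$.

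The main step is to compare the conformal capacity of a ring domain on the source side with that of its image under $f$. Concretely, I would fix $s$ slightly larger than $L$ so that $\overline{B}(0,r)$ lies in the normal domain $U:=U(0,f,s)$, on which $f$ is proper of degree $i(0,f)$, and work with the ring domain $E=(U,\overline{U(0,f,s/2)})$: its image $fE=(B(0,s),\overline{B}(0,s/2))$ is a spherical ring of capacity equal to a dimensional constant $c_n$, and the capacity/modulus inequalities for quasiregular maps over a normal domain then give $\mathrm{cap}(E)\le i(0,f)\,K_O(f)\,c_n$ — the local index entering precisely as this one multiplicative factor, through the counting of lifts in the Poletsky--V\"ais\"al\"a inequalities. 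On the other hand, the sandwich for $U$ together with the extremal properties of the Gr\"otzsch and Teichm\"uller rings shows that $\mathrm{cap}(E)$ is a monotone function of $L^*/l^*$, bounded below by $\varphi_n(L^*/l^*)$ for a decreasing homeomorphism $\varphi_n$; hence $L^*/l^*\le\varphi_n^{-1}\!\big(i(0,f)K_O(f)c_n\big)$. Letting $s\to0$ bounds the ``inverse'' linear distortion, and an elementary comparison — using that $r\mapsto L(0,r,f)$ and $r\mapsto l(0,r,f)$ are non-decreasing and essentially inverse to $s\mapsto L^*(0,s,f)$ and $s\mapsto l^*(0,s,f)$ — converts this into a bound on $L/l$, hence on $H(0,f)$, of the required form. (In the quasiconformal special case the argument shortcuts entirely: $K$-quasiconformal maps of $\R^n$ are quasisymmetric, and quasisymmetry applied to the triple $0,a,b$ bounds $L/l=|f(b)|/|f(a)|$ at once.)

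I expect the main obstacle to be exactly the capacity transfer in the non-injective case: since $f$ is not a homeomorphism one cannot simply pull a ring back by $f^{-1}$, and the comparison must be carried out by lifting curve families under the proper map $f|_U$, checking that the lifts of the relevant boundary-to-boundary family stay in the correct region and that $i(0,f)$ of them are pairwise essentially disjoint. This is the reason the constant depends on the \emph{product} $i(x,f)K_O(f)$ rather than on the two quantities separately, and it is the only genuinely technical point; the identification of the model rings, the monotonicity and surjectivity of $\varphi_n$, the explicit capacities of the spherical, Gr\"otzsch and Teichm\"uller rings, and the final passage to $\limsup_{r\to0}$ are all classical and would simply be cited.
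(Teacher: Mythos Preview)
The paper does not give its own proof of this statement: it is quoted verbatim as \cite[Theorem II.4.3]{Rickman} and used as a black box. Your sketch is essentially the argument one finds in Rickman's book --- work in a normal neighbourhood $U(x,f,s)$ where $f$ is proper of degree $i(x,f)$, transfer the capacity of a spherical ring in the image back to the source via the $K_O$/V\"ais\"al\"a inequality (picking up the factor $i(x,f)K_O(f)$), compare the source capacity against extremal ring functions to bound $L^*/l^*$, and finally pass from the inverse linear distortion $H^*(x,f)$ to $H(x,f)$ --- so your approach is correct and coincides with the cited source rather than offering an alternative.

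One small caution: be careful with the direction of the capacity inequality and which dilatation appears. The raw $K_O$-inequality reads $\operatorname{cap} fE \le K_O(f)\operatorname{cap} E$, which by itself gives a \emph{lower} bound on $\operatorname{cap} E$; to get the \emph{upper} bound $\operatorname{cap} E \le i(0,f)K_O(f)\,c_n$ you wrote, you need the V\"ais\"al\"a inequality for path families lifted through the proper map $f|_U$ (Rickman II.9), which is exactly what you allude to under ``the main obstacle''. With that in place your outline is sound.
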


We will also need to mention bounded length distortion mappings, or BLD for short. These are quasiregular mappings with the extra condition that they are locally Lipschitz.

\subsection{Iterates of quasiregular mappings and uqr mappings}

For $m\geq 1$, we write $f^m$ for the $m$-fold iterate of $f$. A mapping is called {\it uniformly $K$-quasiregular}, or $K$-uqr for short, if $K(f^m) \leq K$ for all $m\geq 1$.

It follows from Miniowitz's version of Montel's Theorem \cite[Theorem 4]{Miniowitz} that for a uqr map $f:\overline{\R^n} \to \overline{\R^n}$, space breaks up into the Fatou set and Julia set in exact analogy with complex dynamics in the plane. More precisely, $x_0$ is in the Fatou set $F(f)$ if there exists a neighbourhood $U$ of $x_0$ on which the family of iterates forms a normal family. Conversely, $x_0$ is in the Julia set $J(f)$ if no such neighbourhood can be found. A useful characterization of the Julia set is via the blowing-up property: $x_0 \in J(f)$ if and only if for every open set $U$ containing $x_0$, the forward orbit $O^+(U) = \cup_{m\geq 0}f^m(U)$ contains all of $\R^n$ except possibly for finitely many points.
For an introduction to the theory of quasiregular iteration, we refer to \cite{Berg}.

The fixed points of a uqr mapping can be classified in a similar way to those for holomorphic mappings in the plane. We refer to \cite{HMM} for a complete classification, but we briefly discuss the case of repelling fixed points, since it is of relevance to this paper. An immediate issue with classifying fixed points is that a uqr map need not be differentiable at a given fixed point. To deal with this problem, Hinkkanen, Martin and Mayer \cite{HMM} introduced the notion of a generalized derivative obtained as a limit of
\[ \frac{f(x_0+\lambda_m x) - f(x_0) }{\lambda_m} \]
through a sequence $\lambda_m \to 0$. While for a given sequence the limit may not exist, it is guaranteed to along a subsequence by the normal family machinery.
It is possible for $f$ to have more than one generalized derivative at $x_0$, and the collection of generalized derivatives is called the infinitesimal space of $f$ at $x_0$. A generalized derivative is a uniformly quasiconformal mapping of $\R^n$.

By \cite[Lemma 4.4, Definition 4.5]{HMM}, a fixed point $x_0$ of $f$ is called {\it repelling} if one, and hence all, generalized derivative $\varphi$ of $f$ is loxodromic, that is, fixes $0$ and $\infty$ and so that $\varphi^m(x) \to \infty$ for all $x\neq 0$. 
By \cite[Theorem 6.3 (ii)]{HMM}, a uqr mapping $f$ can be linearized by a quasiregular mapping $L$ of transcendental type (that is, the degree is infinite) to a generalized derivative $\varphi$. 

\begin{definition}
Let $f$ be a uqr map with repelling fixed point $x_0$, and let $\varphi$ be a generalized derivative of $f$ at $x_0$. Then a quasiregular map $L:\R^n \to \overline{\R^n}$ is called a quasiregular linearizer for $f$ at $x_0$ if $L(0)=x_0$, $L$ is locally injective near $0$ and $L$ satisfies the equation
\[ f\circ L = L\circ \varphi.\]
\end{definition}

Given $f,x_0$ and $\varphi$, there are infinitely many possibilities for $L$. In the plane, a linearizer is normalized via $L'(0)=1$. Note that such a normalization is unavailable for quasiregular mappings.
The function theoretic and dynamical properties of $L$ and the dependence on $f$ and $\varphi$ were studied in \cite{Fletcher1} and \cite{FM}.

\subsection{Crystallographic groups}

We first recall several topological notions. A convex polytope in $\R^n$ is the convex hull of a finite collection of points. Every convex polytope has an associated dimension, which may be strictly less than the dimension of the ambient space. For brevity, in this paper we will consider only convex polytopes, and drop the term convex.
An $n$-cell $\Omega$ in $\R^n$ is a set of the form
\[ \Omega = \{ (x_1 , \ldots, x_n ) : a_i \leq x_i \leq b_i, i \in \{1,\ldots, n\} \},\]
where $a_i <b_i$ for $i\in \{1,\ldots, n\}$.

A group $G$ acting on $\R^n$ is called a crystallographic group if it is a discrete cocompact group of isometries. We refer to \cite{Sz} for more information on crystallographic groups than the outline we present here.

Every crystallographic group has a maximal subgroup of translations $T$ generated by $n$ linearly independent vectors $v_1,\ldots,v_n$. The point group $P=G/T$ is a finite group. 
A fundamental set $\Omega$ for $G$ is a subset of $\R^n$ containing precisely one point from each of the orbits of $G$. We may assume that $\Omega$ is connected, and thus is the interior of an $n$-polytope together with some of its boundary. 
The identifications of points of $\partial \Omega$ comes from $G$.
A fundamental domain is then $\operatorname{int}(\Omega)$. See \cite[\S 5.1]{MS} for more on this distinction between a fundamental domain and a fundamental set.

Since $G$ is cocompact, the quotient space $\mathcal{K} = \R^n / G$ is compact as a topological space. 
Moreover, $\mathcal{K}$ is an orbifold, that is, it is locally modelled on the quotient of Euclidean space under the action of a finite group.

\begin{definition}
The crystallographic orbifold $\mathcal{K}$ is called {\it spherical} if the underlying space of $\mathcal{K}$ is a topological sphere.
\end{definition}

If the point group is just the identity, then $\mathcal{K}$ is an $n$-torus and hence not spherical. We necessarily must have $P$ generated by rotations for $\mathcal{K}$ to be spherical. In dimension two, there are $17$ crystallographic groups and one can check that four of them have spherical orbifolds. One can further consider the cases in dimension $3$ by analyzing the classification of Dunbar \cite{Dunbar}.

Since we will be using spherical orbifolds as a classification, it is worth pointing out that there do exist spherical orbifolds in every dimension.

\begin{example}
\label{ex:2}
Consider the translation group $T$ acting on $\R^n$ generated by $< x\mapsto x+e_i : i=1,\ldots ,n >$. Choose the fundamental set $\Omega_1$ for the action of $T$ so that $\overline{\Omega_1}$ is the $n$-cell $[-1/2,1/2]^n$.
Now consider the subgroup $P$ of $O(n)$ generated by rotations through $\pi$ which have fixed point set equal to one of the $(n-2)$-dimensional hyperplanes generated by $n-2$ of the standard basis vectors in $\R^n$.

Letting $G$ be the crystallographic group generated by $T$ and $P$, we can choose the fundamental set $\Omega$ for $G$ so that $\overline{\Omega}$ is the $n$-cell $[-1/2,1/2]\times [0,1/2]^{n-1}$. We can view $\overline{\Omega}$ as a union of two closed topological balls, given by $B_1 = \overline{\Omega} \cap \{x_1 \geq 0 \}$ and $B_2 = \overline{\Omega} \cap \{x_1 \leq 0 \}$ respectively, which are glued together along their respective boundaries under the action of $G$ and the obvious gluing in the $x_1 = 0$ hyperplane.

For any face $F$ of $\overline{\Omega}$ in a hyperplane of the form $x_j=0$ or $x_j = 1/2$ for $j=2,\ldots, n$, the action of $G$ glues the part with $x_1\geq 0$ to the part with $x_1 \leq 0$ by, informally, folding over the $(n-2)$-dimensional set $F \cap \{x_1= 0\}$. The faces contained in hyperplanes of the form $x_1 = \pm 1/2$ are glued together by an element of $T$. Consequently, $B_1$ and $B_2$ are glued toegether in such a way that we obtain a topological sphere as the underlying space of the crystallographic orbifold.

See Figure \ref{fig:1} for the dimension two version of this.

\begin{figure}[h]
\begin{center}
\includegraphics[width=6in]{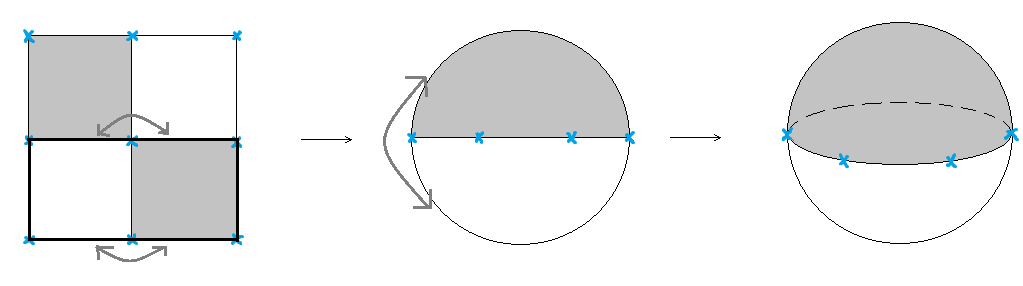}
\caption{Identifying the square to give a $2$-sphere.}
\label{fig:1}
\end{center}
\end{figure}

\end{example}

We remark that this example is surely standard to topologists, but we were unable to find a specific reference in the crystallographic literature. The theme of this example is gluing together two topological $n$-disks along their boundary in a standard way to obtain the usual $n$-sphere $S^n$. Gluing via more complicated homeomorphisms leads to the theory of exotic spheres.

\section{Strongly automorphic quasiregular mappings}

\subsection{The definition}

\begin{definition}
A quasiregular mapping $h:\R^n \to \overline{\R^n}$ is called {\it strongly automorphic} with respect to a quasiconformal group $G$
if the following two conditions hold:
\begin{enumerate}[(i)]
\item $h\circ g = h$ for all $g\in G$,
\item $G$ acts transitively on the fibres $h^{-1}(y)$, that is, if $h(x_1) = h(x_2)$, then there exists $g\in G$ such that $x_2=g(x_1)$.
\end{enumerate}
\end{definition}

The following definition encapsulates the groups to be considered in this paper.

\begin{definition}
A quasiconformal group $G$ acting on $\R^n$ is called {\it tame} if there exists a quasiconformal mapping $\varphi : \R^n \to \R^n$ and a discrete group of isometries $G'$ acting on $\R^n$ so that $G = \varphi G'\varphi^{-1}$
\end{definition}

\begin{remark}
\begin{enumerate}[(i)]
\item The group $G$ must necessarily be discrete if $h$ is required to be non-constant, since quasiregular mappings are discrete.
\item Strongly automorphic mappings in the literature have been considered when $G$ itself is a discrete group of isometries. The generalization here to quasiconformal conjugates of such groups does not lead to a much wider class of quasiregular mappings. In fact, Lemma \ref{lem:comp} below shows that a quasiregular map which is strongly automorphic with respect to a tame quasiconformal group is quasiconformally conjugate to a strongly automorphic quasiregular map under the previous definition. 
\item On the other hand, we do need to consider this wider class to prove our results, and in particular apply \cite[Corollary 3.7]{FM} in proving a linearizer is strongly automorphic in the proof of Theorem \ref{thm:1} below. The reason for this is that a generalized derivative $\varphi$ is just a uniformly quasiconformal map and typically will not satisfy $\varphi G\varphi^{-1} \subset G$ for a discrete group of isometries $G$.
\item Not every quasiconformal group is tame, see for example \cite{Tukia2}. We leave the exploration of the case where $G$ is a quasiconformal group that is not tame to future work.
\item It is worth remarking that the second condition means that not every periodic function is strongly automorphic. For example, $f(z) = e^z$ is strongly automorphic with respect to the group of translations generated by $w\mapsto w+ 2\pi i$, but it is not hard to see that $f(z) = e^{e^z}$ is not strongly automorphic with respect to a group of translations because the second condition does not hold. The set $f^{-1}(e)$ consists of points of the form 
\[ \frac{1}{2} \ln (1+4k^2\pi^2) + i \left [ \tan^{-1} (2k\pi) + 2m \pi \right ], \]
for $k,m\in \Z$. This is not a quasiconformal image of a lattice in $\C$. With regards to Theorem A, $e^{e^z}$ cannot be a linearizer for a rational map, since every linearizer has finite order, depending on the multiplier of the fixed point and the degree of $f$.
\end{enumerate}
\end{remark}

\subsection{The Schr\"oder equation}

Every strongly automorphic mapping has a family of associated uqr mappings.

\begin{theorem}
\label{thm:schroder}
Suppose $h:\R^n \to \overline{\R^n}$ is strongly automorphic with respect to a tame quasiconformal group $G$, that is $G= \varphi G' \varphi^{-1}$ where $G'$ is a discrete group of isometries and $\varphi$ is a quasiconformal map. Further suppose that there is a uniformly quasiconformal mapping $A$ satisfying $A(0)=0$ and 
\[ AGA^{-1} \subset G.\]
Then there is a unique uqr map $f:\overline{\R^n} \to \overline{\R^n}$ which solves the Schr\"oder equation
\[ f\circ h = h\circ A.\]
\end{theorem}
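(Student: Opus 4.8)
### Proof proposal for Theorem \ref{thm:schroder}

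The plan is to define $f$ directly from the formula one is forced into by the Schr\"oder equation, and then to check that this is well-defined, quasiregular, and uniformly so. Given a point $y \in \overline{\R^n}$ in the image of $h$, pick any $x \in h^{-1}(y)$ and set $f(y) := h(A(x))$. To see that this does not depend on the choice of preimage, suppose $h(x_1) = h(x_2) = y$. By strong automorphy there is $g \in G$ with $x_2 = g(x_1)$, and then $h(A(x_2)) = h(A(g(x_1))) = h\bigl((AgA^{-1})(A(x_1))\bigr)$. Since $AGA^{-1} \subset G$, the element $AgA^{-1}$ lies in $G$, so condition (i) of strong automorphy gives $h(A(x_2)) = h(A(x_1))$, as needed. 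Because $h$ is a non-constant quasiregular map it is open and surjective onto $\overline{\R^n}$ (or onto a set whose complement is small; one should use that strongly automorphic maps of this type are surjective — this follows from the cocompactness of $G'$ and the fact that $h$ factors through the compact quotient), so $f$ is defined on all of $\overline{\R^n}$, and $f \circ h = h \circ A$ holds by construction. Uniqueness is immediate: any solution must satisfy $f(h(x)) = h(A(x))$ for every $x$, and $h$ is onto.

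Next I would establish that $f$ is quasiregular. The key tool is Lemma \ref{lem:comp} (referenced in the excerpt), which lets us reduce to the case where $G = G'$ is an honest discrete group of isometries: replacing $h$ by $h \circ \varphi$, $A$ by $\varphi^{-1} A \varphi$, and noting $\varphi^{-1} A \varphi \cdot G' \cdot \varphi^{-1} A^{-1} \varphi \subset G'$, we may assume $h$ is strongly automorphic with respect to a discrete isometry group. Now $h$ is a local quasiconformal homeomorphism away from its branch set $B_h$, and on any small ball $U$ disjoint from $B_h$ and from $h^{-1}(h(B_h))$, we can write $f = h \circ A \circ (h|_U)^{-1}$ as a composition of quasiregular maps, hence $f$ is quasiregular there with $K(f) \le K(h)^2 K(A)$. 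This shows $f$ is quasiregular off a closed set that is locally the image under $h$ of $B_h \cup$ branch-type sets, which has measure zero and is "small" in the capacity sense; since $f$ is already continuous (it is continuous because $h$ is open and the formula is continuous in $x$) and locally bounded, a standard removability theorem for quasiregular maps (e.g.\ sets of conformal capacity zero, or the fact that $B_h$ and its images are of topological codimension $\ge 2$) lets us conclude $f$ is quasiregular on all of $\overline{\R^n}$ with the same dilatation bound.

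Finally, uniform quasiregularity: iterating the Schr\"oder equation gives $f^m \circ h = h \circ A^m$ for all $m$, by an easy induction using $A^m G A^{-m} \subset G$. The same composition argument as above, applied to $f^m = h \circ A^m \circ (h|_U)^{-1}$ on suitable balls, yields $K(f^m) \le K(h)^2 K(A^m) \le K(h)^2 K$, where $K$ is the uniform bound on the dilatations of the iterates of the uniformly quasiconformal map $A$. Hence $f$ is $K(h)^2 K$-uqr. I would also remark that $f$ fixes $h(0)$ since $A(0) = 0$, which is what makes $f$ a candidate for the linearizer story developed afterwards.

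The main obstacle I anticipate is the regularity/removability step: one must argue carefully that the formula $f = h \circ A \circ (h|_U)^{-1}$, valid only away from branch sets and their $h$-images, actually globalizes to a genuinely quasiregular (not merely locally quasiregular off a bad set) map on $\overline{\R^n}$. This requires knowing the bad set is removable for bounded quasiregular mappings, which in turn uses that the branch set of a quasiregular map has topological dimension at most $n-2$ (so its image under the discrete open map $h$ also does), together with a removability theorem such as \cite{Rickman} for sets of zero conformal capacity. Everything else — well-definedness, the functional equation, uniqueness, and the dilatation bookkeeping for the iterates — is essentially forced and routine once the reduction via Lemma \ref{lem:comp} is in place.
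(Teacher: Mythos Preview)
Your proposal is essentially the paper's own proof: well-definedness via strong automorphy and $AGA^{-1}\subset G$, local quasiregularity from $f = h\circ A\circ h^{-1}$ off the post-branch set $h(B_h)$, extension across that measure-zero set by continuity, and the uniform bound from $f^m\circ h = h\circ A^m$ together with uniform quasiconformality of $A$.

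Two small corrections. First, the reduction to $G=G'$ via Lemma \ref{lem:comp} is unnecessary; the argument already goes through for the tame group as stated, and the paper does not make this reduction. Second, your assertion that $h$ is surjective onto $\overline{\R^n}$ ``from cocompactness of $G'$'' is not correct in general: in the Zorich-type and sine-type cases $h$ omits two points, respectively one point (see \cite{MS} and Lemma \ref{lem:asympomit}). The paper's fix is to observe that $h$ omits at most two values, and that these omitted values must be the exceptional values of $f$ (they are forced to be permuted by $f$, since $f$ maps $h(\R^n)$ into $h(\R^n)$), so $f$ extends to all of $\overline{\R^n}$. This is the one place where your argument needs tightening; everything else matches.
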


The proof of this theorem is almost identical to \cite[Theorem 21.4.1]{IM}, where $G$ is a discrete group of isometries and $A$ is a uniformly quasiconformal linear map, but we include a proof for the convenience of the reader.

\begin{proof}
The first step is to show that $f$ is well-defined on $h(\R^n)$. If $h(x_1) = h(x_2)$, then since $h$ is strongly automorphic with respect to $G$, there exists $g \in G$ such that $g(x_1) = x_2$. By the hypotheses of the theorem, there is $g_1 \in G$ such that $A \circ g = g_1 \circ A$. Therefore
\[ h(A(x_2))=  h(A(g(x_1))) = h(g_1(A(x_1))) = h(A(x_1)).\]
Therefore $f$ is well-defined and continuous on $h(\R^n)$. Since $h$ can omit at most two values in $\overline{\R^n}$ (see \cite{MS}), the omitted values of $h$ are the exceptional values of $f$. Hence $f$ is well-defined in $\overline{\R^n}$.

To see that $f$ is quasiregular, away from the post-branch set $h(B_h)$ of $h$ we can write $f=h\circ A\circ h^{-1}$ for a suitable branch of the inverse. Then since $h$ is quasiregular and $A$ is quasiconformal, $f$ is $K(h)^2K(A)$-quasiregular away from $h(B_h)$. Since $f$ is continuous and the branch set of $h$ has measure zero, we conclude that $f$ is in fact quasiregular. To see that $f$ is unique, $h\circ A \circ h^{-1}$ is uniquely defined in a neighbourhood of $h(0)$ for any choice of branch of $h^{-1}$. The functional equation then implies $f$ is uniquely defined everywhere.

The Schr\"oder equation implies that for every $m\in \N$ we have 
\[ f^m \circ h = h \circ A^m.\]
Since $A$ is uniformly quasiconformal, the distortion of the right hand side is bounded above over all $m\in \N$. Consequently $f$ is uniformly quasiregular. 
\end{proof}

\subsection{The three cases}

The next question is to classify the groups that can give rise to strongly automorphic quasiregular mappings. 
Suppose that $h$ is strongly automorphic with respect to $G$, where $G = \varphi G' \varphi^{-1}$, $\varphi$ is quasiconformal and $G'$ is a discrete group of isometries. The next lemma shows that we may pass to $\varphi^{-1} \circ h \circ \varphi$, which is strongly automorphic with respect to $G'$.

\begin{lemma}
\label{lem:comp}
Suppose $h$ is a strongly automorphic quasiregular mapping with respect to $G$. 
\begin{enumerate}[(i)]
\item If $p:\overline{\R^n} \to \overline{\R^n}$ is quasiconformal, then $p\circ h$ is strongly automorphic with respect to $G$.
\item If $p:\R^n \to \R^n$ is quasiconformal, then $h\circ p$ is strongly automorphic with respect to $p^{-1}Gp$.
\end{enumerate}
\end{lemma}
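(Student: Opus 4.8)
The plan is to verify the two defining conditions of strong automorphy directly in each case, using only that quasiconformal maps are homeomorphisms and that compositions of quasiregular maps with quasiconformal maps are quasiregular. For part (i): given $g \in G$, we have $(p \circ h) \circ g = p \circ (h \circ g) = p \circ h$, so condition (i) of the definition holds with respect to the same group $G$. For condition (ii), suppose $(p\circ h)(x_1) = (p\circ h)(x_2)$; since $p$ is injective, this forces $h(x_1) = h(x_2)$, and strong automorphy of $h$ gives $g \in G$ with $g(x_1) = x_2$. Thus $G$ still acts transitively on the fibres of $p \circ h$. One should also note $p \circ h$ is quasiregular since $p$ is quasiconformal.

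For part (ii): set $\htt = h \circ p$. For $\tilde g = p^{-1} g p$ with $g \in G$, we compute $\htt \circ \tilde g = h \circ p \circ p^{-1} \circ g \circ p = h \circ g \circ p = h \circ p = \htt$, establishing condition (i) with respect to $p^{-1}Gp$. For condition (ii), suppose $\htt(x_1) = \htt(x_2)$, i.e. $h(p(x_1)) = h(p(x_2))$; strong automorphy of $h$ yields $g \in G$ with $g(p(x_1)) = p(x_2)$, hence $(p^{-1}gp)(x_1) = x_2$ with $p^{-1}gp \in p^{-1}Gp$. Finally $p^{-1}Gp$ is again a quasiconformal group (conjugation by the quasiconformal map $p$), and it is discrete precisely when $G$ is, so this is the natural group with respect to which $h \circ p$ is strongly automorphic.

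There is essentially no obstacle here: the lemma is a formal bookkeeping exercise, and the only mild point of care is to keep track of the fact that in part (ii) the domain map $p$ must be a self-homeomorphism of $\R^n$ (so that pre-composition makes sense and $p^{-1}Gp$ again acts on $\R^n$), whereas in part (i) the target map $p$ is a self-homeomorphism of $\overline{\R^n}$, which is why the ambient spaces in the two hypotheses differ. I would also briefly remark that one does not lose the quasiregularity hypotheses on the composite: $K(p \circ h) \le K(p)K(h)$ and $K(h \circ p) \le K(h)K(p)$.
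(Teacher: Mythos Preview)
Your proof is correct and follows essentially the same approach as the paper's own proof: direct verification of the two defining conditions of strong automorphy, using injectivity of $p$ for part (i) and the conjugation identity $(h\circ p)\circ(p^{-1}gp)=h\circ g\circ p=h\circ p$ for part (ii). The paper dispatches part (i) with the phrase ``the first part is clear'' and gives for part (ii) exactly the computation you wrote, so your version is simply a more explicit rendering of the same argument.
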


\begin{proof}
The first part is clear. For the second part, if $g \in G$ then
\[ (h \circ p) \circ (p^{-1} \circ g \circ p) = h\circ g \circ p = h \circ p,\]
and if $h(p(x_1)) = h(p (x_2))$ then $p(x_2) = g(p (x_1))$ for some $g\in G$. Then $x_2 = g'(x_1)$ for some $g' \in p^{-1}Gp$. This shows that $h$ satisfies the two conditions to be strongly automorphic.
\end{proof}

We may therefore assume that 
\begin{equation}
\label{eq:h1} 
h_1 = \varphi^{-1} \circ h \circ \varphi
\end{equation}
is strongly automorphic with respect to a discrete group of isometries $G'$. The group $G'$ contains a maximal subgroup $T$ consisting of translations and by the Bieberbach theory, see for example \cite{Sz}, the quotient $G'/T$ is a finite group. In fact, in our case, $G'/T$ must consist of rotations since $h\circ g$ is orientation preserving for every $g\in G'$.

As was observed in \cite{MS}, not every periodic quasiregular mapping is strongly automorphic. In particular, there are only two possibilities for the translation subgroup: $T$ must be isomorphic to either $\Z^n$ or $\Z^{n-1}$. While one can construct periodic quasiregular mappings with $T$ isomorphic to $\Z^k$ for $k<n-1$, the Schr\"oder equation no longer has a guaranteed uqr solution.

If $T$ is isomorphic to $\Z^n$, then $G'$ is a crystallographic group acting on $\R^n$.
In this case, the quasiregular map must have poles (see \cite{MS}). We will call such quasiregular mappings of $\wp$-type, in analogy with the doubly periodic Weierstrass $\wp$-function in the plane.

The second case, where $T$ is isomorphic to $\Z^{n-1}$ and $G'|_{\R^{n-1}}$ acts as a crystallographic group on $\R^{n-1}$, splits into two further subcases. 
A fundamental domain for the action of $T$ on $\R^n$ can be taken to be a beam in $\R^n$ that is, without loss of generality, perpendicular to $\{x_n = 0\}$, and the quotient $G'/T$ either contains a rotation identifying the two ends of the beam, or it doesn't. 
We call the quasiregular mappings arising in the first subcase sine-type, in analogy with trigonometric functions in the plane, and the second subcase are called Zorich-type, since Zorich \cite{Zorich} was the first to construct such quasiregular generalizations of the exponential function.

We can therefore classify strongly automorphic mappings, and uqr solutions to a Schr\"oder equation involving the strongly automorphic map, as follows.

\begin{definition}
Let $h$ be a strongly automorphic quasiregular mapping with respect to a tame quasiconformal group $G$, let $\varphi$ be a quasiconformal map with $G = \varphi G' \varphi^{-1}$ where $G'$ is a discrete group of isometries, let $A$ be a loxodromically repelling uniformly quasiconformal map with $AGA^{-1} \subset G$ and let $f$ be a uqr solution to the Schr\"oder equation $f\circ h = h \circ A$.
We say that:
\begin{enumerate}
\item $h$ is of $\wp$-type if the translation subgroup $T$ of $G'$ is isomorphic to $\Z^n$; then we say that $f$ is of Latt\'es-type,
\item $h$ is of Zorich-type if the translation subgroup $T$ of $G'$ is isomorphic to $\Z^{n-1}$ and $G'$ does not contain a rotation switching the ends of a fundamental beam for the action of $T$ on $\R^n$; then we say that $f$ is of power-type,
\item $h$ is of sine-type if the translation subgroup $T$ of $G'$ is isomorphic to $\Z^{n-1}$ and $G'$ does contain a rotation switching the ends of a fundamental beam for the action of $T$ on $\R^n$; then we say that $f$ is of Chebyshev-type,
\end{enumerate}
\end{definition}

Mayer constructed specific examples of all three types in \cite{Mayer2,Mayer1}. We note that our nomenclature of Latt\`es-type differs slightly from that of Mayer who called all three types here Latt\`es-type. We wish to distinguish between the cases.

By \cite[Theorem 8.3]{MS}, the limits in a fundamental beam for a sine-type or Zorich-type map exist. It is worth pointing out both that the hypotheses of this result apply to strongly automorphic mappings (since $G/T$ is a finite group) and that \cite[Theorem 8.3]{MS} holds when the image is $\overline{\R^n}$ instead of $\R^n$ almost verbatim. Then by \cite[Theorem 8.2]{MS}, the omitted values of a sine-type or Zorich-type map are contained in the set of asymptotic values in a beam. The following lemma does not seem to be stated in \cite{MS}, so we include a proof.

\begin{lemma}
\label{lem:asympomit}
Using the notation above, suppose $h$ is either a sine-type or Zorich-type map with automorphism group $G$ a discrete group of isometries. Moreover, suppose that $a_1 \in \overline{\R^n}$ is an asymptotic value of $h$ in a fundamental beam. Then $h$ omits $a_1$.
\end{lemma}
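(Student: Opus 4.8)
The plan is to argue by contradiction: assume $a_1$ is an asymptotic value of $h$ in a fundamental beam but that $a_1$ is nevertheless attained, say $h(p_0) = a_1$ for some $p_0 \in \R^n$. Let $B$ be a fundamental beam for the action of the translation subgroup $T \cong \Z^{n-1}$ on $\R^n$, oriented so that it is perpendicular to $\{x_n = 0\}$; by the remarks preceding the lemma (quoting \cite[Theorem 8.3]{MS}), the limit $a_1 = \lim_{t \to \infty} h(x + t e_n)$ exists and is independent of $x$ in the cross-section, where we have chosen the end of the beam at which $a_1$ is the asymptotic value. The first step is to translate $p_0$ by an element of $T$ so that it lies in (the closure of) $B$, using that $h$ is $T$-periodic; so we may assume $h$ attains $a_1$ at a point $p_0 \in \overline{B}$.

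The second step is the key one: I want to produce, near $p_0$, a sequence of genuine preimages of values close to $a_1$ that conflicts with local discreteness, or else use a normal-family/blowing-up argument along the beam. Concretely, consider the sequence of maps $h_k(x) = h(x + k e_n)$ on a fixed neighborhood $N$ of $p_0$ inside the beam. Since $h$ is $K$-quasiregular, the family $\{h_k\}$ is a normal family (Miniowitz--Montel), so after passing to a subsequence $h_k \to H$ locally uniformly, where $H$ is quasiregular or constant. But the existence of the asymptotic value forces the limit on the ``core line'' $\{p_0 + t e_n\}$ to be $a_1$; combined with the fact that asymptotic limits in a beam are uniform in the cross-section (again \cite[Theorem 8.3]{MS}), we get $H \equiv a_1$ on $N$. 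On the other hand, $h_k$ and $h$ differ by the covering translation $x \mapsto x + k e_n$ only when this lies in $T$, i.e.\ not in general for a sine-type or Zorich-type map; so I instead use strong automorphy directly: since $h(p_0) = a_1$ and $h_k \to a_1$ uniformly on $N$, for $k$ large the value $a_1$ (or values arbitrarily near it) is attained by $h$ both near $p_0$ and near $p_0 + k e_n$, and strong automorphy of $h$ with respect to $G$ then forces an element $g_k \in G$ with $g_k(p_0) = p_0 + k e_n$ (approximately). As $k \to \infty$ this exhibits infinitely many elements of $G$ moving the compact set $\overline{N}$ a bounded amount into disjoint regions, contradicting the discreteness of $G$ (equivalently, that $G/T$ is finite and $T$ has rank only $n-1$, so no element of $G$ translates unboundedly in the $e_n$ direction).

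The main obstacle I anticipate is making the ``$a_1$ is attained near both $p_0$ and $p_0 + k e_n$'' step rigorous when $a_1$ is only an asymptotic value, not an attained value of $h_k$: one has $h_k(p_0) \to a_1$ but not $h_k(p_0) = a_1$, so one cannot directly invoke strong automorphy which is a statement about exact fibres. I would fix this by working with $h^{-1}(a_1)$ itself. Since $h(p_0) = a_1$ and $h$ is open and discrete, there is a neighborhood $V$ of $p_0$ with $h^{-1}(a_1) \cap V = \{p_0\}$ and a ball $D$ about $a_1$ whose full preimage component through $p_0$ is a neighborhood $U \subset V$. By the asymptotic-value hypothesis, $h^{-1}(D)$ contains a ``neighborhood of the end of the beam,'' i.e.\ $\{x \in B : x_n > M\}$ for some $M$. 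Now pick the component of $h^{-1}(D)$ containing this neighborhood of the end; it is an unbounded component $W$. The point $a_1$ is attained in $W$ if and only if $a_1 \in h(W)$, and strong automorphy says any two points of $h^{-1}(a_1)$ are $G$-related. If $p_0 \notin W$ then we need $W$ to contain some preimage of $a_1$, and then $p_0$ is $G$-equivalent to a point with unboundedly large $x_n$-coordinate, again contradicting that $G$ has translation rank $n-1$ and finite point group (so coordinates in the $e_n$-direction of a $G$-orbit are bounded on compacta). If instead $W$ contains \emph{no} preimage of $a_1$, then $a_1$ is omitted on the unbounded set $W$; but $h(W)$ is an open subset of $\overline{\R^n}$ whose closure contains $a_1$, and a short argument using that $h|_W$ is a proper quasiregular map onto $D \setminus \{a_1\}$ of finite degree (finite because a beam cross-section is compact and $h$ has bounded multiplicity there by the linear-distortion bound, Theorem~\ref{thm:lindist}) shows $D \setminus \{a_1\}$ cannot be the proper image unless $a_1$ is a genuine omitted value of $h$ — which is exactly the conclusion. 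So in every case we either reach a contradiction with discreteness of $G$ or conclude $h$ omits $a_1$.
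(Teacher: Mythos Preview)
Your proposal overcomplicates the argument because you work with a fundamental beam for the translation subgroup $T$ rather than for the full group $G$. The key observation you are missing is that, by strong automorphy, $h$ is \emph{injective} on any fundamental set $B$ for $G$: if $h(x_1)=h(x_2)$ with $x_1,x_2\in B$, then $x_2=g(x_1)$ for some $g\in G$, forcing $x_1=x_2$. With this, the proof is immediate. Assume $y\in B$ has $h(y)=a_1$; take a normal neighbourhood $U$ of $y$ with $h(U)=V\ni a_1$, and let $U'\subset B$ be the set of $G$-representatives of points of $U$. Then $h(U')=V$ and every point of $U'$ has bounded $n$th coordinate (since the $G$-orbit of a bounded set has bounded $x_n$-coordinate, $T$ having rank $n-1$ and $G/T$ being finite). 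But the asymptotic sequence $x_m\in B$ with $(x_m)_n\to\infty$ eventually has $h(x_m)\in V$, so $h(x_m)=h(y_m)$ for some $y_m\in U'$ with $y_m\neq x_m$, contradicting injectivity on $B$.

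As written, your argument has genuine gaps. You yourself flag that the ``$g_k(p_0)\approx p_0+ke_n$'' step is not rigorous, and your attempted repair in the last paragraph does not close. Two specific problems: first, the claim that $h|_W:W\to D\setminus\{a_1\}$ is proper of finite degree is unjustified (the component $W$ containing the end of the beam in fact contains an entire slab $\{x_n>M\}$ by the uniform convergence in \cite[Theorem~8.3]{MS}, so it is not contained in any single beam, and Theorem~\ref{thm:lindist} says nothing about global multiplicity). Second, and more fatally, even if $W$ contains no preimage of $a_1$, this does not yield that $h$ omits $a_1$ --- by hypothesis $a_1=h(p_0)$ with $p_0\notin W$, so your case analysis does not reach the stated conclusion. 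In the other branch, a single preimage $q\in W$ gives only one element $g\in G$ with $g(p_0)=q$; you would need to iterate to force ``unboundedly large $x_n$-coordinate'', and you do not do so.
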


\begin{proof}
Let $B$ be a fundamental beam for the action of $G$ on $\R^n$. By the hypotheses, there is a sequence $(x_m) \in B$ with $n$'th component $(x_m)_n$ satisfying $|(x_m)_n| \to \infty$ and $h(x_m) \to a_1$.

Suppose for a contradiction that there exists $y\in B$ with $h(y) = a_1$. Then by considering normal neighbourhoods, we can find small neighbourhoods $U,V$ of $y,a_1$ respectively with $h(U) = V$. Let $U' \subset B$ be the corresponding neighbourhood modulo the action of $G$, so $h(U') = V$. Note that the $n$'th coordinate of all elements of $U'$ is bounded above by some constant $C$. However, for all large enough $m$, we find that $h(x_m) = h(y_m)$ for some $y_m \in U'$. Since $x_m \notin U'$ for large enough $m$, we contradict the fact that $h$ is injective on $B$.
\end{proof}

Consequently, a sine-type map omits one value in $\overline{\R^n}$ and a Zorich-type map omits two values in $\overline{\R^n}$.
By choosing a M\"obius map $M$ appropriately, $M\circ h_1$ is still strongly automorphic by Lemma \ref{lem:comp} (i) and the omitted values of $M\circ h_1$ are contained in $\{ 0 ,\infty \}$. 
In what follows, we will assume that the asymptotic values are contained in $\{0,\infty \}$ and then, if necessary, post-compose by a M\"obius map to return to the original strongly automorphic map.

In the sine-type case, $G'|_{\R^{n-1}}$ does not just consist of translations and rotations of $\R^{n-1}$, but also reflections arising from the restriction to $\R^{n-1}$ of rotations of $\R^n$ switching the ends of fundamental beams. 
This corresponds to $z\mapsto -z$ being an invariance for $\cos(z)$, and the restriction to $\R$ given by $x\mapsto -x$ is a reflection.
Denote by $G_{or}$ the subgroup of $G'$ preserving the ends of the fundamental beams or, equivalently, the subgroup of $G'$ which, when restricted to $\R^{n-1}$, consists of orientation preserving maps. Then a fundamental domain for the action of $G_{or}$ is a beam, say $B$.

\begin{lemma}
\label{lem:switch}
With the notation above, there is a unique element $R$ of $G'$ which maps $B$ to itself and switches the ends of $B$.
\end{lemma}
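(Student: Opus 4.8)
The plan is to realise $G_{or}$ as an index-two normal subgroup of $G'$ and then to exhibit the element $R$ inside the non-trivial coset, exploiting the product structure of the beam $B$.

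First I would observe that $g\mapsto\sigma(g)$, where $\sigma(g)=+1$ if $g$ preserves the two ends of the fundamental beams and $\sigma(g)=-1$ if it switches them, is a homomorphism $\sigma\colon G'\to\{\pm1\}$ with $\ker\sigma=G_{or}$; in the sine-type case $\sigma$ is surjective, so $[G':G_{or}]=2$ and $G'=G_{or}\sqcup R_0G_{or}$ for any choice of $R_0\in G'\setminus G_{or}$. Uniqueness follows immediately: if $R,R'$ both carry $B$ onto $B$ and switch its ends then $\sigma(R)=\sigma(R')=-1$, hence $R^{-1}R'\in G_{or}$ and $(R^{-1}R')(B)=B$; since $B$ is a fundamental domain for $G_{or}$ its $G_{or}$-translates are pairwise disjoint, so $R^{-1}R'=\mathrm{id}$. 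Moreover any $R\in G'$ with $R(B)=B$ and $\sigma(R)=-1$ automatically switches the two ends of $B$, because $\sigma(R)=-1$ means $R$ reverses the distinguished direction of the beams globally. Thus it suffices to produce one such $R$.

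For existence I would use the structure recalled just before the lemma: after an isometric change of coordinates the translation lattice $T\cong\Z^{n-1}$ lies in $\{x_n=0\}$; as $T$ is the group of all translations in $G'$, every $g\in G'$ normalises $T$ and hence respects the splitting $\R^n=\R^{n-1}\times\R$, acting by an isometry $A_g$ on the first factor and by $x_n\mapsto\sigma(g)x_n+c_g$ on the second; and since $G_{or}/T$ is finite, each $g\in G_{or}$ has trivial $x_n$-part (a suitable power lies in $T$, which fixes $x_n$). Therefore the projection $\pi\colon G'\to\operatorname{Isom}(\R^{n-1})$ onto the first factor is injective on $G_{or}$, $P_{or}:=\pi(G_{or})$ is a crystallographic group of $\R^{n-1}$, and the beam may be written $B=D'\times\R$ with $D'$ a fundamental domain for $P_{or}$. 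Since $R(D'\times\R)=\pi(R)(D')\times\R$, an element $R\in G'\setminus G_{or}$ maps $B$ onto $B$ precisely when $\pi(R)$ fixes $D'$ setwise. I would therefore choose $D'$ (equivalently, choose the fundamental beam $B$) to be invariant under some orientation-reversing element $p$ of $\tilde P:=\pi(G')=\langle P_{or},\pi(R_0)\rangle$, which is again crystallographic with $[\tilde P:P_{or}]\le2$: when $\tilde P=P_{or}$ take $p=\mathrm{id}$ and any $D'$; when $[\tilde P:P_{or}]=2$ pick an involution $p\in\tilde P\setminus P_{or}$ and let $D'$ be the union of a $\tilde P$-fundamental domain with its $p$-image, which is $p$-invariant. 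Pulling $p$ back through $\pi$ into the coset $G'\setminus G_{or}$ then produces the desired $R$, and uniqueness from the previous paragraph shows it is the only one.

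The step I expect to be the crux is precisely this last choice: guaranteeing that, for the automorphy groups that actually arise in our setting, the coset $\tilde P\setminus P_{or}$ contains an orientation-reversing involution — equivalently, that the fundamental beam can be selected with this reflection symmetry. For an arbitrary crystallographic pair this fails (the plane group $pg$, whose orientation-reversing elements are all genuine glide reflections of infinite order, is the standard obstruction), so the argument must appeal to the classification of admissible groups carried out in Section 3, in particular to the fact that the underlying space of the crystallographic orbifold is a topological sphere, in order to rule such cases out.
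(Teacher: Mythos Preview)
Your uniqueness argument and the paper's rest on the same observation --- the only element of $G_{or}$ carrying the fundamental beam $B$ onto itself is the identity --- but yours is the cleaner execution. The paper computes $R_1\circ R_2=R_2\circ R_1=\mathrm{Id}$, which only gives $R_2=R_1^{-1}$, and then spends a further paragraph on the geometry of the restrictions $R_i|_{\{x_n=0\}}$ (treating them as reflections in hyperplanes $E_i\subset\R^{n-1}$, arguing that commuting reflections have perpendicular or equal fixed sets, and ruling out the perpendicular case) to finally reach $R_1=R_2$. Your formulation with $R^{-1}R'\in G_{or}$ mapping $B$ to itself yields $R=R'$ in one line and bypasses that detour entirely.

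On existence the approaches diverge. The paper dispatches it in a single sentence --- ``Since we are considering sine-type maps, there exists at least one such element'' --- effectively treating it as part of the setup, with $B$ tacitly chosen so that the claim holds. You instead attempt an honest construction and correctly isolate the real obstruction: if every element of $\tilde P\setminus P_{or}$ is a genuine glide (as in $pg$), no bounded $D'$ can be stabilised and the construction fails. The gap is in your proposed remedy. The spherical-orbifold result you want to invoke is Theorem~\ref{thm:crystalclass}, and its proof in the sine-type case (as well as Proposition~\ref{prop:sine}) already assumes that a fundamental set for $G'$ can be taken to be a half-beam, which is exactly the existence assertion of the present lemma; appealing to it here would be circular. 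If you want a self-contained argument ruling out the glide-only case, note that such a $G'$ acts freely on $\R^n$, so $\R^n/G'$ is a manifold with $\pi_1\cong G'\neq 1$, whereas strong automorphy of a sine-type $h$ forces $\R^n/G'\cong\overline{\R^n}\setminus\{\text{one point}\}\cong\R^n$, which is simply connected; this avoids the circularity, though it does not by itself handle every group lacking an orientation-reversing involution.
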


\begin{proof}
Since we are considering sine-type maps, there exists at least one such element.
Suppose there are two rotations $R_1,R_2 \in G'$ with the required properties. Then $R_1\circ R_2$ and $R_2 \circ R_1$ must both be elements of $G_r$ which map $B$ to itself. Consequently, 
\[ R_1 \circ R_2 = R_2 \circ R_1 = Id.\]
Now, for $i=1,2$, $R_i|_{x_n = 0}$ acts as a reflection on $\R^{n-1}$. Denote by $E_i\subset \R^{n-1}$ the fixed point set of $R_i$. These are codimension one hyperplanes in $\R^{n-1}$. Since $R_1$ and $R_2$ commute, standard geometry arguments imply that  $E_1$ and $E_2$ are either perpendicular or are equal. In the first case, $R_1 \circ R_2$ is a non-trivial rotation acting on $\R^n$. However, $R_1\circ R_2$ is the identity and hence $E_1,E_2$ must be equal. It follows that $R_1 = R_2$.
\end{proof}

The upshot of this lemma is that once we restrict to a fixed fundamental beam for the action of $G_{or}$, we can talk about {\it the} rotation acting on it which switches the ends of the beam.

\subsection{Classifying the crystallographic groups}

\begin{theorem}
\label{thm:crystalclass}
A quasiregular map $h:\R^n \to \overline{\R^n}$ is strongly automorphic with respect to a tame quasiconformal group $G$ only if the crystallographic orbifold is spherical.
\end{theorem}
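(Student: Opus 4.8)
The plan is to treat separately the three classes from the classification above, in each case exploiting that the two defining properties of a strongly automorphic map force $h$ to descend to an \emph{injective} continuous map on the relevant quotient orbifold. First I would invoke Lemma \ref{lem:comp} to replace $h$ by $h_1=\varphi^{-1}\circ h\circ\varphi$, strongly automorphic with respect to the discrete isometry group $G'$; since a quasiconformal conjugacy does not affect the homeomorphism type of the crystallographic orbifold, it suffices to work with $h_1$ and $G'$. Condition (i) says $h_1$ is constant on $G'$-orbits, so it descends to a continuous $\bar h_1$ on the quotient; condition (ii) says $\bar h_1$ is injective.

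In the $\wp$-type case $G'$ is crystallographic on $\R^n$, so $\mathcal K=\R^n/G'$ is compact and $\bar h_1:\mathcal K\to\overline{\R^n}$ is a continuous injection of a compact space into a Hausdorff space, hence a homeomorphism onto $h_1(\R^n)$. As quasiregular maps are open, $h_1(\R^n)$ is open; as the continuous image of a compact set it is closed; and $\overline{\R^n}=S^n$ is connected, so $h_1(\R^n)=\overline{\R^n}$. Thus $\mathcal K\cong S^n$ is spherical, with no appeal to the Poincar\'e conjecture required here.

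For the sine-type and Zorich-type cases the crystallographic orbifold $\mathcal K$ is the $(n-1)$-dimensional quotient of $\R^{n-1}$ by the orientation-preserving crystallographic group $\Gamma$ that $G_{or}$ induces on a cross-section. I would fix a slice $S=\{x_n=t_0\}$ at a level not fixed by the fold; $G_{or}$ preserves $S$ and acts there as $\Gamma$, so $h_1|_S$ descends to a continuous injection $\mathcal K\to\overline{\R^n}$, a homeomorphism onto its image $\Sigma:=h_1(S)$ by compactness of $\mathcal K$. It remains to identify $\Sigma$. Here I would use \cite[Theorem 8.3]{MS}: in a fundamental beam the map has a single asymptotic value at each end, lying in $\{0,\infty\}$ after the normalization of section 3. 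Slicing the beam above and below the level $t_0$ and adjoining the corresponding asymptotic value at each end, the same compact-to-Hausdorff argument shows each ``half'' of $\overline{\R^n}$ is homeomorphic to the cone on $\mathcal K$; gluing the two cones along $\Sigma$ exhibits $S^n$ as the unreduced suspension $\Sigma\mathcal K$ — directly in the Zorich case, and after passing to the quotient by the fold (equivalently, a branched double cover by a nearby slice) in the sine case. From $\Sigma\mathcal K\cong S^n$ I would then deduce $\mathcal K\cong S^{n-1}$: the local structure of $S^n$ at a suspension point forces $\mathcal K$ to be a homotopy $(n-1)$-sphere, Alexander duality for $\mathcal K\subset S^n$ (which separates $S^n$ into two cells) gives that $\mathcal K$ has the homology of $S^{n-1}$, and since $\Gamma$ is orientation preserving its singular set has codimension $\ge 2$, so the codimension-two strata are locally Euclidean and the links of the deeper strata embed in $S^n$ as homotopy spheres, hence are genuine spheres by the generalized Poincar\'e conjecture applied inductively in the dimension; this makes $\mathcal K$ a closed topological manifold, and one final application of the generalized Poincar\'e conjecture yields $\mathcal K\cong S^{n-1}$.

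I expect the sine/Zorich step to be the main obstacle, and within it the manifold-recognition at the deep strata of $\mathcal K$: a suspension can be a manifold without the suspended space being a sphere (the double-suspension phenomenon), so one genuinely has to propagate homotopy-sphere information stratum by stratum, which is exactly where the Poincar\'e conjecture becomes indispensable. A secondary technical point is arranging the cone/suspension decomposition in the sine case, where the fold makes the two half-beams overlap and one must pass to the quotient by, or a branched cover over, $\mathcal K$.
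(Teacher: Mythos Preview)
Your $\wp$-type argument is correct and coincides with the paper's. For the Zorich and sine cases your suspension framing $\Sigma\mathcal K\cong S^n$ is equivalent, after deleting the cone points, to the product statement $h(X)\times\R\cong S^{n-1}\times\R$ that the paper actually establishes; but the paper reaches it more directly by writing down the explicit homeomorphism $(h(x,0),t)\mapsto h(x,t)$ from $h(X)\times\R$ onto $h(X\times\R)=\R^n\setminus\{0\}$ and then passing immediately to homotopy equivalence and the generalized Poincar\'e conjecture. In the sine case the paper does not reduce via a fold or a branched double cover at all: instead it shows that $\Omega=\overline{\R^n}\setminus h(Y_0)$ is open, contractible and simply connected at infinity, hence homeomorphic to $\R^n$ by the Stallings--Freedman characterization of Euclidean space, after which the same product argument runs with $h(Y)\times(0,\infty)\to\R^n\setminus h(Y_0)$. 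This sidesteps the overlap and gluing issues you anticipate.

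The one place you genuinely diverge is your attempt to verify that $\mathcal K$ is a topological manifold before invoking Poincar\'e; the paper simply applies Poincar\'e without isolating that hypothesis. Your instinct that the double-suspension phenomenon makes this step non-automatic is sound, but the inductive stratum argument you sketch does not close the gap. The links $S^{k-1}/\Gamma'$ of deep strata are not a priori manifolds, so invoking Poincar\'e on them reproduces the same difficulty one dimension down; and it is unclear in what sense these intrinsic links ``embed in $S^n$ as homotopy spheres'' in a way that feeds the induction. If you want to make this step airtight you will need an argument that exploits the specific structure of linear quotients $S^{k-1}/\Gamma'$ (and the fact that $\mathcal K\times\R$ is locally Euclidean) rather than abstract suspension data alone.
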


To simplify the notation and since quasiconformal maps are homeomorphisms, we may assume that $G$ itself is a discrete group of isometries.

To clarify the statement of the theorem, in the $\wp$-type case the statement is that $\R^n / G$ is homeomorphic to $S^n$. In the Zorich-type case, the statement is that if $B = X_{n-1} \times \R$ is a fundamental beam for the action of $G$ on $\R^n$, then $h( X_{n-1} \times \{t\} )$ is homeomorphic to $S^{n-1}$ for all $t\in \R$. In the sine-type case, the statement is that if $Y_{n-1} \times (0,\infty)$ is the interior of a fundamental beam for the action of $G$ on $\R^n$, then $h(Y_{n-1} \times \{t\})$ is homeomorphic to $S^{n-1}$ for all $t>0$.

\begin{proof}[Proof of Theorem \ref{thm:crystalclass}]
We prove this theorem in each case separately. First, consider the $\wp$-type case. Since $h:\R^n \to \overline{\R^n}$ is surjective and the closure of the fundamental domain for $h$ is an $n$-polytope, the group $G$ must have $\R^n / G$ homeomorphic to $S^{n}$.

Next, consider the Zorich-type case. We may assume that $0$ and infinity are the omitted values and that the periods of the translation subgroup $T$ of $G$ are perpendicular to $e_n$.
The fundamental set for $\R^n / G$ can be chosen to be a beam, say $B = X_{n-1} \times \R$, where $\overline{X_{n-1}}$ is an $(n-1)$-polytope. For convenience, set $X:= X_{n-1} \times \{ 0 \}$. The crystallographic orbifold in this case is obtained by gluing the faces of $\overline{X}$ according to those that are identified under the action of $G$. In particular, for $t\in \R$, $h$ is a homeomorphism from $X_{n-1} \times \{ t \}$ with the quotient topology onto $h(X_{n-1} \times \{ t \}) \subset \R^n \setminus \{ 0 \}$ with the subspace topology, see Figure \ref{fig:2}.
Therefore there is a homeomorphism that maps $h(x,0) \in h(X)$ to $x\in X$.
It then follows that the composition
\[ ( h(x,0) , t) \mapsto (x,t) \mapsto h(x,t) \]
is a homeomorphism from $h(X) \times \R$ onto $h(X\times \R)$. Since $h(X \times \R) = \R^n \setminus \{ 0 \}$ and $\R^n \setminus \{ 0 \}$ is homeomorphic to $S^{n-1} \times \R$, it follows that $h(X) \times \R$ and $S^{n-1} \times \R$ are homeomorphic. From this and the fact that $M\times\R$ and $M$ are homotopy equivalent for any manifold $M$, we see that $h(X)$ and $S^{n-1}$ are homotopy equivalent and therefore by the (generalized) Poincar\'e conjecture we conclude that $h(X)$ and $S^{n-1}$ are homeomorphic. Applying this argument to $X_t = X_{n-1} \times \{ t \}$ for $t\in \R$ shows $h(X_t)$ is homeomorphic to $S^{n-1}$.

\begin{figure}[h]
\begin{center}
\includegraphics[width=5in]{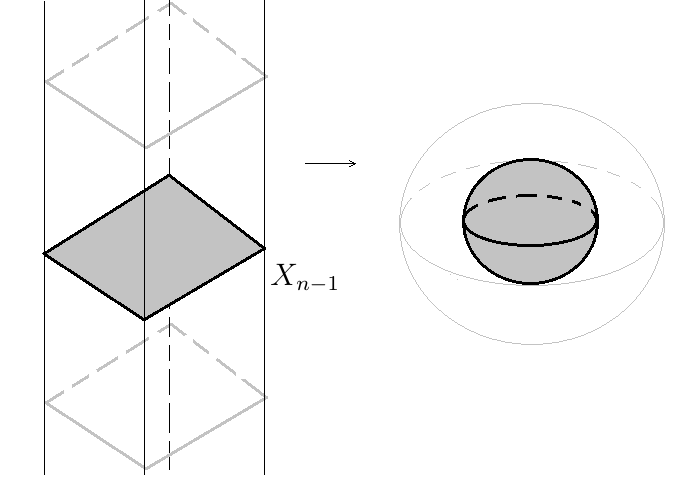}
\caption{Identifying $X_{n-1} \times \R$ and $S^{n-1} \times \R$. The image of $X_{n-1} \times \{ 0 \}$ is a topological  $(n-1)$-sphere.}
\label{fig:2}
\end{center}
\end{figure}

Finally, consider the sine-type case. We may again assume that the omitted value is infinity and that the periods of the translation subgroup $T$ of $G$ are perpendicular to $e_n$.
In this case, a fundamental set $B'$ for $\R^n /G$ can be chosen to be a half-beam. The intersection of $B'$ and the set $\{ x : x_n >0\}$ is $Y_{n-1} \times (0,\infty)$, where $\overline{Y_{n-1}}$ is an $(n-1)$-polytope. Denote by $Y_0$ the intersection of $B'$ and $\{x : x_n = 0\}$. Then $Y_0$ is a subset of $Y_{n-1} \times \{ 0 \}$ that is also an $(n-1)$-polytope since the rotation identifying prime ends of the full beam acts by gluing two halves of the base of $B'$ together. See Figure \ref{fig:3}.

Let $\Omega = \overline{\R^n} \setminus h(Y_0)$. Since $h(Y_0)$ is closed, $\Omega$ is open. Moreover, $\Omega$ is contractible since the inverse image of $\Omega$ under $h^{-1}$ is contractible in $B'$ with the one point compactification at the prime end. The last property to note is that $\Omega$ is simply connected at infinity. This means that for every compact set $E \subset \Omega$, there is a compact set $F \subset \Omega$ containing $E$ so that the induced map $\pi_1(\Omega \setminus F) \to \pi_1(\Omega \setminus E)$ is the zero map. This again follows by considering the inverse images in the one point compactification of $B'$.

Every open contractible subset of $\R^n$ which is simply connected at infinity is known to be homeomorphic to $\R^n$. This is standard for $n=2$, due to Stallings \cite{Stallings} for $n>4$, Freedman \cite{Freedman} for $n=4$ and the original reference for $n=3$ seems hard to locate, although it is certainly contained in \cite{BT}. We therefore have that $\Omega$ is homeomorphic to $\R^n$ and hence $\R^n \setminus h(Y_0)$ is homeomorphic to $\R^n \setminus \{ 0 \}$ which in turn is homeomorphic to $S^{n-1} \times \R$. 

For convenience, set $Y = Y_{n-1} \times \{1\}$. Then there is a homeomorphism from $Y$ with the quotient topology to $h(Y) \subset \R^n \setminus h(Y_0)$. Hence for $y\in Y$ and $t\in (0,\infty)$, the composition
\[ ( h(y,1) , t) \mapsto (y,t) \mapsto h(y,t) \]
is a homeomorphism from $h(Y) \times (0,\infty)$ to $h(Y\times (0,\infty ))$. Since $h(Y) \times (0,\infty)$ is homeomorphic to $h(Y) \times \R$ and $h(Y\times (0,\infty)) = \R^n \setminus h(Y_0)$ is homeomorphic to $S^{n-1} \times \R$, the Poincar\'e conjecture argument again shows $h(Y)$ and $S^{n-1}$ are homeomorphic. Applying this argument to $Y_t = Y_{n-1} \times \{ t \}$ for $t>0$ shows $h(Y_t)$ is homeomorphic to $S^{n-1}$.

\begin{figure}[h]
\begin{center}
\includegraphics[width=5in]{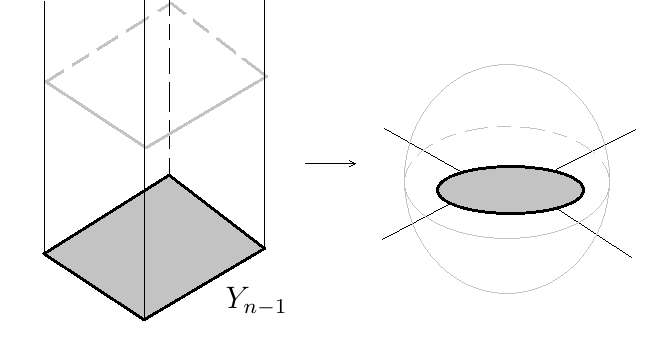}
\caption{Identifying $Y_{n-1} \times (0,\infty)$ and $S^{n-1} \times \R$. The image of the base of $B'$ is a closed topological $(n-1)$-disk.}
\label{fig:3}
\end{center}
\end{figure}

\end{proof}

In the sequel, we will need specific examples of Zorich-type and sine-type mappings which are strongly automorphic with respect to a given crystallographic group with spherical orbifold. We remark that there are various constructions of strongly automorphic mappings in the literature, see \cite{Berg0,BE,Drasin,MS,Mayer2,Mayer1}, although there has been no systematic construction for each possible group that can occur. We leave it as a challenge to the reader to show that every crystallographic group with spherical orbifold in $\R^n$ has a quasiregular map of $\wp$-type which is strongly automorphic with respect to it (see \cite{MS} for an example).

\begin{proposition}
\label{prop:zor}
Let $G$ be a crystallographic group acting on $\R^{n-1}$ with spherical orbifold and translation part $T$ generated by linearly independent vectors $\{w_1,\ldots,w_{n-1}\}$. Then there exists a quasiregular map $Z_G:\R^n \to \R^n \setminus \{ 0 \}$ of Zorich-type that is strongly automorphic with respect to $G$ and so that $Z_G( \{x_n=0 \})$ is the unit sphere in $\R^n$.
\end{proposition}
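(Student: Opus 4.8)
The plan is to write $Z_G$ in generalised spherical coordinates. Identifying $\R^n$ with $\R^{n-1}\times\R$ and taking $S^{n-1}$ to be the unit sphere of $\R^n$, I would set $Z_G(x',x_n)=e^{x_n}\phi(x')$, where $\phi:\R^{n-1}\to S^{n-1}$ is a quasiregular map that is strongly automorphic with respect to $G$ and maps $\R^{n-1}$ onto the whole unit sphere, and I would let $G$ act on $\R^n$ by $g\cdot(x',x_n)=(g(x'),x_n)$. As explained below, every assertion about $Z_G$ then reduces to a property of $\phi$, so the real content of the proof is the construction of $\phi$.

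To construct $\phi$ I would use the spherical orbifold hypothesis directly. Fix a fundamental polytope $\overline\Omega\subset\R^{n-1}$ for $G$; by hypothesis the space obtained from $\overline\Omega$ by gluing its faces according to the identifications coming from $G$ is homeomorphic to $S^{n-1}$, and since a reflection in some stabiliser would force this orbifold to have boundary, the point group $G/T$ consists of orientation preserving isometries, so the extended action on $\R^n$ is orientation preserving as well. Off the singular locus the orbifold $\R^{n-1}/G$ carries its flat quotient metric; near the singular locus it is locally modelled on the quotient of $\R^{n-1}$ by a finite group of rotations, and that quotient --- being a topological ball, as the spherical-orbifold hypothesis requires --- is bi-Lipschitz to a Euclidean ball (in the two-dimensional case it is a flat cone of total angle less than $2\pi$, bi-Lipschitz to $\R^2$ via an angular rescaling). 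Hence $\R^{n-1}/G$ is a compact space which is locally bi-Lipschitz to $\R^{n-1}$ and homeomorphic to $S^{n-1}$, so it admits a bi-Lipschitz homeomorphism $\psi$ onto the round sphere. Composing $\psi$ with the quotient map $q:\R^{n-1}\to\R^{n-1}/G$ produces $\phi:=\psi\circ q:\R^{n-1}\to S^{n-1}$, a BLD branched cover (branching exactly over the images of the orbifold's singular strata) whose distortion and Lipschitz constants depend only on $G$, which satisfies $\phi\circ g=\phi$ for all $g\in G$, which is onto $S^{n-1}$, and which is strongly automorphic with respect to $G$ since $\psi$ is injective and the only fibre identifications of $q$ are the $G$-ones. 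Alternatively, granting the analogous ``$\wp$-type'' statement for $G$ acting on $\R^{n-1}$ (left as a challenge above), one may take $\phi$ to be such a strongly automorphic quasiregular map $\R^{n-1}\to\overline{\R^{n-1}}$ --- automatically onto, since its image is the compact quotient $\R^{n-1}/G$ --- postcomposed with stereographic projection onto the unit sphere of $\R^n$.

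Given $\phi$, the verification for $Z_G(x',x_n)=e^{x_n}\phi(x')$ is short. Since $|Z_G(x',x_n)|=e^{x_n}>0$, $Z_G$ maps $\R^n$ into $\R^n\setminus\{0\}$, and onto it because any $y\neq0$ equals $e^{\log|y|}(y/|y|)$ and $\phi$ is surjective; and $Z_G(\{x_n=0\})=\phi(\R^{n-1})=S^{n-1}$, the unit sphere, as required. In the orthogonal splitting of $\R^n$ at $Z_G(x',x_n)$ into the radial line and the tangent space to the sphere through that point, $DZ_G$ is block diagonal with blocks $e^{x_n}$ and $e^{x_n}D\phi$; since $\phi$ is BLD, the singular values of $D\phi$ lie almost everywhere in a fixed compact subset of $(0,\infty)$, so $Z_G$ is quasiregular with maximal dilatation bounded in terms of $n$ and $G$, and with branch set $B_\phi\times\R$. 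Strong automorphy passes from $\phi$ to $Z_G$: one has $Z_G(g(x'),x_n)=e^{x_n}\phi(g(x'))=Z_G(x',x_n)$, and if $Z_G(x',x_n)=Z_G(y',y_n)$ then $x_n=y_n$ and $\phi(x')=\phi(y')$, so $y'\in Gx'$ and hence $(y',y_n)\in G\cdot(x',x_n)$. Finally the translation subgroup of $G$ acting on $\R^n$ is isomorphic to $\Z^{n-1}$ and consists of translations perpendicular to $e_n$, and no element of $G$ reverses the $x_n$-coordinate, so none switches the ends of a fundamental beam $\overline\Omega\times\R$; therefore $Z_G$ is of Zorich-type.

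The step I expect to be the main obstacle is the construction of $\phi$: passing from the purely topological fact $\R^{n-1}/G\cong S^{n-1}$ to a genuinely quasiregular map. Off the singular locus this is the familiar ``two hemispheres glued along an equator'' picture behind Zorich's original construction and causes no difficulty; the delicate points are (i) that the finite rotation-group quotients arising near the singular set --- forced by the spherical-orbifold hypothesis to be topological balls --- are bi-Lipschitz to Euclidean balls, so that the branching of $\phi$ is genuinely quasiregular and does not degenerate, and (ii) that the resulting $G$-periodic map on $\R^{n-1}$ has distortion bounded uniformly over all translates of $\overline\Omega$, which is immediate once (i) is settled since $G$ acts isometrically and $\overline\Omega$ is compact. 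These are local computations, but they must be carried out with uniform constants; everything else is bookkeeping.
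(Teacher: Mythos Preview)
Your overall architecture matches the paper's exactly: write $Z_G(x',x_n)=e^{x_n}\phi(x')$ for a strongly automorphic BLD map $\phi:\R^{n-1}\to S^{n-1}$, and then everything about $Z_G$ is bookkeeping. The difference is in how $\phi$ is produced. The paper works combinatorially: it takes the fundamental polytope $\overline X$, observes that the face-identifications coming from $G$ turn $X$ into a PL $(n-1)$-sphere $\Sigma_{n-1}$, and then chooses an embedding $\iota:\Sigma_{n-1}\to S^{n-1}$ that is smooth on each top-dimensional simplex. The composition is visibly BLD because it is built from finitely many smooth pieces on a compact set, and it is strongly automorphic by construction. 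Your route is more abstract: you want to put the quotient metric on $\R^{n-1}/G$, argue it is locally bi-Lipschitz to Euclidean space, and then invoke a bi-Lipschitz homeomorphism to the round sphere. The gap you yourself flag is real on two levels. First, ``locally bi-Lipschitz to $\R^{n-1}$ and homeomorphic to $S^{n-1}$'' does not by itself give ``bi-Lipschitz homeomorphic to $S^{n-1}$''; that is essentially uniqueness of Lipschitz structures on spheres (Sullivan), which is a substantial input and fails or is delicate in certain dimensions. Second, the local claim near singular strata of codimension $\geq 3$ is the same problem one dimension down: you need $\R^{n-1}/H$ bi-Lipschitz to $\R^{n-1}$ for the finite rotation group $H$, which again reduces to $S^{n-2}/H$ being bi-Lipschitz to the round $S^{n-2}$, so the argument is circular unless you ground it somewhere. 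The paper's PL approach cuts through both issues by never leaving the category of piecewise-smooth maps on a fixed triangulation of a single polytope; this is what buys the uniform distortion bound without appealing to any structural theorem. Your alternative (take a $\wp$-type map in dimension $n-1$ and postcompose with stereographic projection) would work cleanly, but as you note the paper explicitly leaves that existence statement open. A minor point: the paper also allows a sign $e^{\pm x_n}$ and, if needed, a M\"obius postcomposition to force the correct orientation and the correct asymptotic value at the $+\infty$ end of the beam; your argument that $G/T$ consists of rotations handles orientation of the action but not of the map $\phi$ itself, so you should retain that flexibility.
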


\begin{proof}
By applying a preliminary orthogonal mapping if necessary, we may assume that the span of $\{w_1,\ldots, w_{n-1} \}$ agrees with the span of $\{e_1,\ldots, e_{n-1} \}$ in $\R^n$.
Let $B$ be a fundamental set for the action of $G$ on $\R^n$. We may assume $B$ is connected and in fact a beam $X\times \R$, where $\overline{X}$ is an $(n-1)$-polytope. The group $G$ acts on $B$ by gluing $(n-2)$-dimensional faces of $\overline{X}$ together.

With this identification, we may triangulate $\overline{X}$ in such a way that $X$ is PL-homeomorphic to a PL $(n-1)$-sphere $\Sigma_{n-1}$. The standard orientation that $\overline{X} \times \{t\}$ inherits from $\R^{n-1}$ induces an orientation on $\Sigma_{n-1}$.
We can then find an embedding $\iota: \Sigma_{n-1} \to S^{n-1}$ so that on each face of $\Sigma_{n-1}$, $\iota$ is smooth. 
We obtain a map $Z_G : \{x_n = 0\} \to S^{n-1}$ that by construction is strongly automorphic with respect to $G$. 
This map is BLD since it arises as a composition of a PL-homeomorphism and $\iota$, which is smooth almost everywhere. Moreover, it is orientation preserving as a map $\R^{n-1} \to S^{n-1}$ and hence is quasiregular.

To extend the domain of definition of $Z_G$, we use the formula
\[ Z_G(x_1,\ldots, x_{n-1},x_n) = e^{\pm x_n}Z_G(x_1,\ldots, x_{n-1},0)\]
in $\overline{B}$, where the sign is chosen so that the extended map is also orientation preserving.
Finally, we extend to all of $\R^n$ via the group $G$ to obtain $Z_G$. If we require $\lim_{x\in B, x_n \to \infty}Z_G(x) = +\infty$, and the choice of sign does not give this, we may post-compose by a M\"obius map to obtain this.
See Figure \ref{fig:4}.

\begin{figure}[h]
\begin{center}
\includegraphics[width=5in]{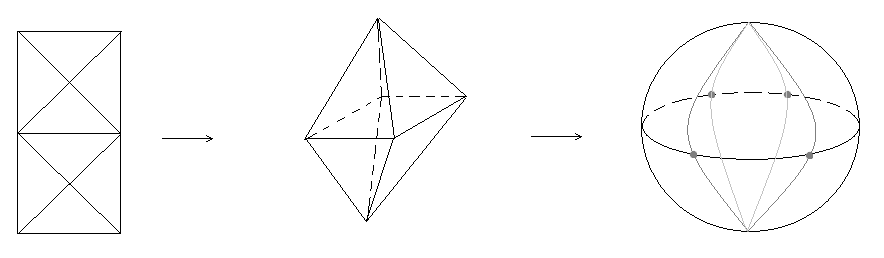}
\caption{Mapping $\overline{X}$ to a sphere.}
\label{fig:4}
\end{center}
\end{figure}

To check that $Z_G$ is indeed quasiregular is a computation analogous to the original computation by Zorich \cite{Zorich}, see also \cite{Berg0,IM,Rickman}.
\end{proof}

\begin{proposition}
\label{prop:sine}
Let $G$ be a crystallographic group acting on $\R^{n-1}$ with spherical orbifold and translation part $T$ generated by linearly independent vectors $\{w_1,\ldots,w_{n-1}\}$.  Then there exists a quasiregular map $S_G:\R^n \to \R^n$ of sine-type that is strongly automorphic with respect to $G_1 = <G,R>$, where $R$ is a rotation 
mapping a fundamental beam for the action of $G$ on $\R^n$ into itself and switching the two prime ends.
Moreover, 
\[S( \{x_n = 0\}) = D_{n-1}:= \{x : x_1^2 + \ldots + x_{n-1}^2 \leq 1, x_n = 0\}.\]
\end{proposition}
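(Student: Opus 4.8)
The plan is to mimic the construction of the Zorich-type map in Proposition \ref{prop:zor}, but adapted to the sine-type situation where the fundamental set for $G_1$ is a half-beam and the base of the half-beam must be mapped to the disk $D_{n-1}$ rather than a sphere. First I would set up coordinates: after a preliminary orthogonal map, assume the translation lattice $T$ lies in the span of $\{e_1,\ldots,e_{n-1}\}$, and choose a fundamental set $B = X \times \R$ for $G$ acting on $\R^n$, with $\overline{X}$ an $(n-1)$-polytope. The rotation $R$ then maps $B$ into itself and switches its two prime ends; the subgroup $G_{or} = G$ preserves the ends, and a fundamental set for $G_1 = \langle G, R\rangle$ can be taken to be the half-beam $B' = X \times [0,\infty)$, with $R$ acting on the base $X \times \{0\}$ by folding it in half across the fixed hyperplane of $R|_{x_n=0}$. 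Call $Y_0$ the resulting quotient of the base, an $(n-1)$-polytope as in the discussion preceding Lemma \ref{lem:switch}.

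Next I would construct the map on the base. As in Proposition \ref{prop:zor}, triangulate $\overline{X}$ so that $X$, with the $G$-identifications on its $(n-2)$-faces, is PL-homeomorphic to a PL $(n-1)$-sphere $\Sigma_{n-1}$; the extra folding by $R$ then realizes $Y_0$ (the base of $B'$ modulo $G_1$) as one of the two hemispheres of $\Sigma_{n-1}$ cut out by the fixed set of $R$, i.e.\ as a PL $(n-1)$-disk whose boundary is the $R$-fixed $(n-2)$-sphere. I would then choose a PL-homeomorphism of this disk onto $D_{n-1}$ (smooth on each face), pulling back to a BLD, orientation-preserving, strongly automorphic map $S_G : \{x_n=0\} \to D_{n-1}$, with the $R$-fixed set mapping to $\partial D_{n-1} = S^{n-2}$. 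To extend to all of $\R^n$ I would use a Zorich-type formula on $B'$: writing a point of $D_{n-1}$ in the form $S_G(x',0)$, set $S_G(x',x_n)$ to be the point at Euclidean distance determined by $\cosh$-like growth (concretely, something like $S_G(x',x_n) = (\cos(\text{something})\cdot S_G(x',0),\ \pm\sinh x_n)$ in suitable coordinates, matching how $\cos$ maps a half-strip in $\C$ onto $\overline{\C}\setminus(-\infty,-1]$) so that the half-beam maps onto $\overline{\R^n}$ minus the single omitted value, and then extend over $\R^n$ by $G_1$-equivariance; post-compose by a M\"obius map if needed so the omitted value is $\infty$ and $S_G(\{x_n=0\}) = D_{n-1}$ exactly. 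One checks directly, exactly as in Zorich's original computation and as in the proof of Proposition \ref{prop:zor}, that the resulting map has bounded distortion at the folding locus and at $x_n = 0$, and is orientation preserving, hence quasiregular; by construction it is strongly automorphic with respect to $G_1$ and of sine-type since $G_1$ contains the rotation $R$ switching the ends of the fundamental beam.

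The main obstacle I anticipate is the behaviour at the fold, i.e.\ along the $R$-fixed set in $\{x_n=0\}$ and its extension. There the map $S_G$ is two-to-one onto $\partial D_{n-1}$ and lies in the branch set, so one must verify that the radial extension formula glues correctly across the fold — that the $\cosh$-type profile and the embedding of $Y_0$ into $D_{n-1}$ are compatible so that no folding or orientation reversal occurs off the intended branch set, and that the distortion stays bounded there (this is the analogue of the fact that $\cos$ is a genuine analytic map across the real axis). A clean way to handle this is to first build $S_G$ on the full beam $B$ as a Zorich-type map onto $\overline{\R^n}\setminus\{0,\infty\}$ landing on the sphere at $x_n=0$ (via Proposition \ref{prop:zor}), arrange that $R$ is a symmetry of this map, and then define the sine-type map as a quotient; the branch behaviour along $\mathrm{Fix}(R)$ is then automatically controlled because it comes from the smooth two-fold symmetry, just as the Chebyshev map is obtained from a power/Zorich map by quotienting by $z\mapsto -z$. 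The remaining verification that the normalization can be arranged so that $S(\{x_n=0\}) = D_{n-1}$ with the right orientation is routine, handled by post-composition with a M\"obius map as in the other two propositions.
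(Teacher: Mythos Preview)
Your proposal is essentially correct and lands on the right construction, but the paper handles the delicate part (the transition near the base of the half-beam) by a different device. You propose to first build $S_G$ on $\{x_n=0\}$ mapping onto $D_{n-1}$ and then extend upward by an explicit $\cosh/\sinh$-type formula; you correctly flag the fold along $\mathrm{Fix}(R)$ as the obstacle, and your fallback of quotienting an $R$-symmetric Zorich map is sound. The paper sidesteps the need for an explicit profile altogether: it directly triangulates the closed slab $\overline{Y}\times[0,1]$ and maps it, face-by-face smoothly, onto the upper half-ball $\{x_n\ge 0,\ |x|\le e\}$, sending $\overline{Y}\times\{0\}$ onto $D_{n-1}$ and $\overline{Y}\times\{1\}$ onto the upper hemisphere of radius $e$; it then extends for $x_n\ge 1$ by the Zorich formula $h(y,t)=e^{t-1}h(y,1)$, reflects across $x_n=0$ in both domain and range, and propagates by $G_1$, citing \cite{BE} (and \cite{Drasin}) for the quasiregularity check. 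So the paper builds the interpolation between disk and hemisphere into a single PL map on a slab rather than into a one-variable radial profile; this makes the behaviour at the fold automatic and avoids your worry about gluing a formula across $\mathrm{Fix}(R)$. Your quotient-of-Zorich idea is morally the same as the paper's reflection step, so either route works; the slab triangulation is simply a tidier packaging.
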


\begin{proof}
We may again assume that the span of $\{w_1,\ldots, w_{n-1} \}$ agrees with the span of $\{e_1,\ldots, e_{n-1} \}$ in $\R^n$.
Let $B$ be a fundamental set for the action of $G$ on $\R^n$. We may assume that $B$ is a beam. The action of $R$ on $B$ yields a fundamental domain $B'$ that we may assume is a beam $Y\times (0,\infty)$.
As in the previous proposition, $\overline{Y}$ is an $(n-1)$-polytope.

Via a suitable triangulation of $\overline{Y} \times [0,1]$, we can find a map $h: \overline{Y} \times [0,1] \to \{x \in \R^n : x_n \geq 0, |x| \leq e \}$ which maps the set $\overline{Y} \times \{ 1 \}$ onto $\{ x\in \R^n : x_n\geq 0, |x| = e \}$
and the set $\overline{Y} \times \{ 0 \}$ onto $D_{n-1}$. The map $h$ is to be chosen so that it is smooth on each face of the triangulation.

From here, we follow the construction outlined in \cite{BE} (see also \cite{Drasin}).
We extend to $\overline{B} \cap \{x_n \geq 1\}$ as in the previous proposition via
\[ h(y,t) = e^{t-1}h(y,1).\]
We can then reflect in the $x_n = 0$ hyperplane in the domain and the range to extend $h$ to $\overline{B'}$. Finally, we extend to all of $\R^n$ via the group $G_1$ to obtain the required quasiregular map $S_G$. The computations that $S_G$ is quasiregular are omitted, see \cite{BE,Zorich}.
\end{proof}

\section{Strongly automorphic linearizers}

In this section, with the constructions of the previous section in hand, we will prove Theorem \ref{thm:1}, relating solutions of a Schr\"oder equation and linearizers, and Corollary \ref{cor:1}, on the corresponding Julia sets. 
First let $G$ be a tame quasiconformal group acting on $\R^n$.

\begin{lemma}
\label{lem:winding}
Let $\Stab (0) \subset G$ be the subgroup of $G$ which fixes $0$. Then there is a quasiregular map $W:\R^n \to \R^n$ which fixes $0$, is strongly automorphic with respect to $\Stab(0)$ and so that if $A$ is a loxodromic repelling uqc map satisfying $AGA^{-1} \subset G$, there is a loxodromic repelling uqc map $A_1$ satisfying $W \circ A = A_1 \circ W$.
\end{lemma}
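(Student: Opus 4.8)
The plan is to build $W$ directly as a strongly automorphic map for the finite group $\Stab(0)$, and then to use the semiconjugacy structure to transport the action of $A$ through $W$. First I would observe that $\Stab(0)$ is a finite subgroup of $G$: since $G$ is tame, $G = \varphi G' \varphi^{-1}$ for a discrete group of isometries $G'$ and a quasiconformal $\varphi$; conjugating by $\varphi$ we may as well assume $G$ itself is a discrete group of isometries (as the lemma's preamble already notes we may), and the stabilizer of a point in a discrete isometry group is finite. If $\Stab(0)$ is trivial we simply take $W = \mathrm{id}$, so assume it is a nontrivial finite group $H$ of quasiconformal maps fixing $0$. By a standard averaging/conjugation argument (conjugating $H$ to act by a group of orthogonal maps near $0$, or invoking the existence of strongly automorphic quasiregular maps for finite rotation groups as constructed in the style of Propositions \ref{prop:zor} and \ref{prop:sine}), there is a quasiregular map $W:\R^n \to \R^n$ fixing $0$, locally injective away from the fixed-point locus of $H$, with $W \circ g = W$ for all $g \in H$ and $H$ acting transitively on the fibres of $W$; this $W$ is strongly automorphic with respect to $\Stab(0)$, and $W$ has finite degree equal to $|H|$.

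Next I would produce $A_1$. The key point is that $A$ normalizes $G$ and fixes $0$, so $A$ maps $\Stab(0)$ into $\Stab(A(0)) = \Stab(0)$; since $A$ is injective and $\Stab(0)$ is finite, $A \Stab(0) A^{-1} = \Stab(0)$. Now define $A_1$ on $W(\R^n) = \R^n$ by $A_1(W(x)) := W(A(x))$. This is well-defined: if $W(x_1) = W(x_2)$ then $x_2 = g(x_1)$ for some $g \in \Stab(0)$, hence $A(x_2) = A(g(x_1)) = (AgA^{-1})(A(x_1))$ with $AgA^{-1} \in \Stab(0)$, so $W(A(x_2)) = W(A(x_1))$. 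Continuity and the fact that $W$ is a quotient map give that $A_1$ is continuous; injectivity of $A_1$ follows by running the same argument backwards using $A^{-1}$, which also normalizes $\Stab(0)$; surjectivity is immediate since $W$ and $A$ are surjective. Thus $A_1$ is a homeomorphism of $\R^n$, and away from the branch set of $W$ we may write $A_1 = W \circ A \circ W^{-1}$ locally, so $A_1$ is quasiconformal with dilatation bounded by $K(W)^2 K(A)$; the same identity applied to $A^m$ gives $A_1^m = W \circ A^m \circ W^{-1}$, and since $A$ is uqc the iterates $A^m$ have uniformly bounded dilatation, whence $A_1$ is uniformly quasiconformal. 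Finally, $A_1(0) = A_1(W(0)) = W(A(0)) = W(0) = 0$, and $A_1$ is loxodromic repelling: the generalized derivatives of $A_1$ at $0$ are obtained by blowing up $W \circ A \circ W^{-1}$, and since $W$ is locally injective near a point of its fibre over $0$ away from the branch set (or, at $0$ itself, $W$ behaves like a branched cover of finite degree which does not destroy the loxodromic repelling dynamics), $A_1^m \to \infty$ away from $0$ because $A^m \to \infty$ away from $0$ and $W$ is proper and maps $\infty$ to $\infty$.

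The remaining point is that $AGA^{-1} \subset G$ should also yield $A_1 G_{?} A_1^{-1} \subset G_{?}$ for whatever residual symmetry survives, but for the statement as given we only need the semiconjugacy $W \circ A = A_1 \circ W$, which holds by construction. I expect the main obstacle to be the careful verification that $W$ can be chosen quasiregular (not merely continuous) and strongly automorphic for the finite group $\Stab(0)$ when that group is an arbitrary finite quasiconformal group fixing $0$ rather than a linear one — this is where one must either first conjugate $\Stab(0)$ to an orthogonal action (using, e.g., that a finite quasiconformal group fixing a point is quasiconformally conjugate to a group of orthogonal transformations, by a result in the quasiconformal groups literature) or adapt the polytope-gluing construction of Propositions \ref{prop:zor} and \ref{prop:sine} to the orbifold $\R^n / \Stab(0)$; checking the loxodromic repelling property of $A_1$ at the branch point $0$ of $W$ is a secondary technical point that follows from the finite-degree branched-cover structure.
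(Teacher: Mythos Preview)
Your treatment of $A_1$ is correct and is essentially what the paper does: define $A_1 = W\circ A\circ W^{-1}$ on a fundamental domain, check well-definedness via $A\Stab(0)A^{-1}=\Stab(0)$ (using that a loxodromic repelling uqc map fixes $0$), and read off bijectivity and the uqc property from the corresponding properties of $A$. If anything, your argument here is more explicit than the paper's.

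The genuine gap is precisely the one you flag at the end: the construction of $W$. Your two suggested routes (``averaging/conjugation'' and ``adapt the polytope-gluing of Propositions~\ref{prop:zor} and~\ref{prop:sine}'') are too vague as stated, and the first of them does not obviously produce a strongly automorphic map. The paper carries out something close to your second suggestion, but the substance lies in a fact you have not supplied: after conjugating so that $\Stab(0)$ becomes a finite subgroup $G_0'\subset O(n)$ consisting of rotations, one needs to know that $S^{n-1}/G_0'$ is again a sphere. The paper invokes a theorem of Lange to get that $S^{n-1}/G_0'$ is PL-homeomorphic to $S^{n-1}$. With that in hand, one realizes the action of $G_0'$ on a PL $(n-1)$-sphere $X$, takes the PL quotient map $\alpha:X'\to\Sigma_{n-1}$ onto a PL sphere, and transports via smooth embeddings $\iota_1:X\to S^{n-1}$, $\iota_2:\Sigma_{n-1}\to S^{n-1}$ to obtain a BLD (hence quasiregular) map $W_1:S^{n-1}\to S^{n-1}$ strongly automorphic for $G_0'$. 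This is then extended radially to $\R^n$ and conjugated back by $\varphi$ to give $W$. Note that Propositions~\ref{prop:zor} and~\ref{prop:sine} concern crystallographic groups on $\R^{n-1}$ with spherical orbifold rather than finite orthogonal groups on $S^{n-1}$, so they are not directly applicable; the analogous ``spherical quotient'' input in the present setting is exactly Lange's theorem. Without it there is no reason for a strongly automorphic quasiregular map with target $\R^n$ to exist.
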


\begin{proof}
We first pass to the group $G' = \varphi^{-1}G\varphi$ and denote by $G'_0$ the stabilizer of $0$ in $G'$.

The group $G'_0$ is a finite subgroup of $O(n)$ consisting only of rotations. We can thus view $G'_0$ as acting on $S^{n-1}$. We realize the action of $G'_0$ on $S^{n-1}$ in the PL setting. That is, let $X\subset \R^n$ be an $(n-1)$-polytope so that there is an embedding $\iota_1 : X \to S^{n-1}$ which is smooth on the faces of $X$ and, moreover, $G'_0$ acts on $X$ too. For clarity, denote by $H$ the action of $G'_0$ on $X$. Then for any $h\in H$, there exists $g\in G'_0$ so that $g\circ \iota_1 = \iota_1 \circ h$.

By the main result of \cite{Lange} and including the point at infinity, $S^{n-1} / G_0'$ is PL-homeomorphic to $S^{n-1}$.
This means that there is a refinement $X'$ of $X$ and a PL map $\alpha:X' \to \Sigma_{n-1}$, where $\Sigma_{n-1}$ is a PL $(n-1)$-sphere. If $D$ is a fundamental set for the action of $H$ on $X$, $H$ acts by gluing faces of $\overline{D}$ together in such a way that we obtain $\Sigma_{n-1}$. We can thus pull the triangulation giving $\Sigma_{n-1}$ to $\overline{D}$, and then to the rest of $X$ via $G$. This construction yields $X'$, and immediately implies that $\alpha$ is strongly automorphic with respect to $H$.

Since we can find an embedding $\iota_2:\Sigma_{n-1} \to S^{n-1}$, we obtain a map $W_1:S^{n-1} \to S^{n-1}$ satisfying $\iota_2\circ \alpha = W_1\circ \iota_1$. By construction, this map is BLD since $\alpha$ is PL and the embeddings are smooth on the faces. Hence we can extend $W_1$ to all of $\R^n$ via radial extension, that is, in spherical coordinates $W_1(\sigma,t) = tW_1(\sigma,1)$ for $\sigma \in S^{n-1}$ and $t\geq 0$. This extension is also BLD and hence quasiregular. The fact $W_1$ is strongly automorphic with respect to $G'_0$ follows from the fact that $\alpha$ is strongly automorphic with respect to $H$.

Then $W = \varphi \circ W_1 \circ \varphi^{-1}$ is our required map, and is clearly strongly automorphic with respect to $\Stab(0)$. If $U$ is a fundamental domain for $G_0'$, then $\varphi(U)$ is a fundamental domain for $\Stab(0)$. Hence if $A$ satisfy the hypotheses of the lemma, then for any choice of a branch of the inverse of $W^{-1}$, the map $A_1 := W\circ A\circ W^{-1}$ is well-defined. Moreover, $A_1$ is bijective and loxodromic repelling uniformly quasiconformal since $A$ is. This completes the proof.
\end{proof}

\begin{proof}[Proof of Theorem \ref{thm:1}]

First suppose that $f$ is uqr with repelling fixed point $x_0$, generalized derivative $\psi$ at $x_0$ and corresponding linearizer $L$, which is also strongly automorphic with respect to the tame quasiconformal group $G$, and so that $f\circ L = L\circ \psi$.
If $g\in G$, we have
\[ (L\circ \psi) \circ g = ( f\circ L)\circ g = f\circ L = L\circ \psi = L \circ g \circ \psi\]
by using the first condition in the strong automorphy of $L$ and the linearizer equation. 
Therefore $L \circ (\psi \circ g \circ \psi^{-1}) = L$.
Then by the second condition in the strong automorphy of $L$, we see that
\[ \psi \circ g \circ \psi^{-1} \in G\]
for each $g\in G$. Since $\psi (0)=0$, we see that $\psi$ satisfies the conditions for $A$ given in Theorem \ref{thm:schroder}. Hence there is a unique solution $f_1$ to the Schr\"oder equation $f_1 \circ L = L \circ\psi$ of either power-type, Chebyshev-type or Latt\`es type. However, $f$ satisfies this equation and so $f_1=f$. 

For the converse direction suppose that a solution $f$ of the Schr\"oder equation $f\circ h = h\circ A$ is either of power-type, Chebyshev-type or Latt\`es-type. Suppose we find a linearizer $L$ of $f$ at $L(0)$ satisfying $f\circ L = L \circ \psi$ for some generalized derivative $\psi$ of $f$ at $L(0)$.

First assume that $0$ is not in the branch set of $h$. By \cite[Theorem 1.2]{HM} every loxodromic repelling uniformly quasiconformal map is quasiconformally conjugate to $x\mapsto 2x$. Applying this to $A$ and $\psi$, there is a quasiconformal map $\alpha$ so that $A = \alpha^{-1} \circ \psi \circ \alpha$. Hence
\[ f\circ h = h \circ \alpha ^{-1} \circ \psi \circ \alpha,\]
and so
\[ f\circ (h \circ \alpha^{-1}) = (h \circ \alpha^{-1}) \circ \psi.\]
Since $0$ is not in the branch set of $h \circ \alpha^{-1}$, then $h \circ \alpha^{-1}$ is a linearizer for $f$ at $h(0)$. Now since $h$ is strongly automorphic, it follows by Lemma \ref{lem:comp} that $h  \circ \alpha^{-1}$ is strongly automorphic. Next, by \cite[Corollary 3.7]{FM}, if one element of the set of linearizers at a given point is strongly automorphic then they all are. Hence $L$ is strongly automorphic.

Finally, assume that $0$ is in the branch set of $h$. Apply Lemma \ref{lem:winding} to find a quasiregular map $W$ which is strongly automorphic with respect to $\Stab(0)$. Since near $0$, $h$ and $W$ are both strongly automorphic with respect to $\Stab(0)$, it follows that $h \circ W^{-1}$ is a well-defined quasiregular map which is locally injective near $0$. By Lemma \ref{lem:winding} and the Schr\"oder equation, we have
\begin{align*}
f \circ (h\circ W^{-1}) &= (f\circ h) \circ W^{-1} \\
&= (h \circ A) \circ W^{-1} \\
&= h\circ W^{-1} \circ A_1.
\end{align*}
As in the case above, we can conjugate $A_1 $ to $\psi$ via $\alpha$ and obtain that $h\circ W^{-1} \circ \alpha^{-1}$ is a linearizer for $f$ at $h(0)$. By  \cite[Theorem 3.1]{FM}, we have $L = h\circ W^{-1} \circ \xi$ for some quasiconformal map $\xi$. Hence if we set $P = \xi^{-1} \circ W$, we obtain that $L \circ P$ is strongly automorphic.

\end{proof}

\begin{proof}[Proof of Corollary \ref{cor:1}]
Suppose $L$ is strongly automorphic with respect to $G$, $A$ is a loxodromically repelling uqc map such that $AGA^{-1} \subset G$ and $f$ is the corresponding solution to the Schr\"oder equation $f\circ L = L\circ A$. 
We again have three cases.

{\bf Case 1: $\wp$-type.} In this case, by Theorem \ref{thm:1}, $f$ is of Latt\`es-type. If $U \subset \overline{\R^n}$ is any open set, then for any $m\geq 1$, we have $f^m(L(U)) = L(A^m(U))$. Since $L$ is of $\wp$-type and $A$ is loxodromically repelling, it follows that for $m$ large enough, $f^m(L(U)) = \overline{\R^n}$. Consequently every point of $\overline{\R^n}$ has the blowing-up property and so $J(f) = \overline{\R^n}$.

{\bf Case 2: Zorich-type.}
In this case, $L$ is strongly automorphic with respect to $G = \varphi G' \varphi^{-1}$, where $G'$ is a discrete group of isometries, $\varphi$ is a quasiconformal mapping and the translation subgroup $T$ of $G'$ satisfies $T \cong \Z^{n-1}$. 
Further, $L$ omits two values that we may assume are $0$ and infinity. 

Suppose that $T$ is generated by translations with respect to the linearly independent set $\{ w_1,\ldots, w_{n-1} \}$. Let $\alpha$ be a rotation so that the hyperplane spanned by $\{ \alpha(w_1), \ldots, \alpha(w_{n-1}) \}$ agrees with the hyperplane spanned by $\{e_1,\ldots, e_{n-1}\}$.
Then by Lemma \ref{lem:comp}, $L_1 = (\varphi \circ \alpha)^{-1} \circ L \circ \varphi \circ \alpha$ is strongly automorphic with respect to a group $G_1$ whose translation subgroup $E$ is generated by $\{ \alpha(w_1), \ldots, \alpha(w_{n-1}) \}$. Denote by $B_1$ a fundamental set for $G_1$ given by
$B_1 = X \times \R$ where $\overline{X}$ is an $(n-1)$-polytope.

Recall the Zorich map $Z_{G_1}$ constructed in Proposition \ref{prop:zor}, applied to $\{\alpha(w_1), \ldots, \alpha (w_{n-1}) \}$ and $G_1$ which has the same fundamental beam as $L_1$.
Both $Z_{G_1}$ and $L_1$ are injective on the union of the interior of $B_1$ with some of the boundary, including points identified under $G_1$ only once. The image in both cases is $\R^n \setminus  \{ 0 \}$.
Hence we can define a quasiconformal map $g:\R \setminus \{ 0 \} \to \R \setminus \{ 0 \}$ by $g = L_1 \circ Z_{G_1}^{-1}$, where we choose the branch of the inverse of $Z_{G_1}$ with image $B_1$ as indicated above. Clearly $\lim _{x\to 0} g(x) = 0$ and $\lim_{x\to \infty} g(x) = \infty$ and so we can extend $g$ to a quasiconformal map on all of $\overline{\R^n}$.

By construction, $Z_{G_1}(\{x:x_n=0\} ) = S^{n-1}$ and so $L_1(\{x:x_n = 0\}) = g(S^{n-1})$. Now, since $f\circ L = L\circ A$ it follows that if we write $\beta = \varphi \circ \alpha$ and $A_1 = \beta^{-1} \circ A \circ \beta$, then
\begin{equation}
\label{eq:ztype} 
(\beta^{-1} \circ f \circ \beta) \circ L_1 = L_1 \circ A_1.
\end{equation}
It is not hard to check that $A_1 G_1 A_1^{-1} \subset G_1$ and hence $f_1 := \beta^{-1} \circ f \circ \beta$ is the unique solution of the Schr\"oder equation \eqref{eq:ztype}. Since $A$ is loxodromically repelling and uniformly quasiconformal, it follows that $A_1$ is too. Moreover, since $A_1G_1 A_1^{-1} \subset G_1$, it follows that if $x \in \R^n$ with $x_n >0$ then $(A_1^m(x))_n \to \infty$ and if $x\in \R^n$ with $x_n <0$ then $(A_1^m(x))_n \to - \infty$.
Hence if $x\in \R^n$ with $x_n \neq 0$, then $f_1^m(L_1(x))$ converges to one of the two superattracting fixed points of $f_1$, at $0$ and $\infty$. The Julia set of $f_1$ is therefore equal to $L_1(\{x:x_n = 0\}) = g(S^{n-1})$. Since $f$ is a quasiconformal conjugate of $f_1$, it follows that $J(f) = \beta(g(S^{n-1}))$ which is a quasisphere.

{\bf Case 3: sine-type.}
The idea for the sine-type case is similar to the Zorich-type case, but we will include the details for the convenience of the reader.
As above, $L$ is strongly automorphic with respect to $G=\varphi G' \varphi^{-1}$ and $f$ is a Chebyshev-type uqr mapping satisfying $f\circ L = L\circ A$ with $A G A^{-1} \subset G$. This time the only omitted value of $L$ is infinity.

Again, find a rotation $\alpha$ so that $L_2  = (\varphi \circ \alpha)^{-1} \circ L \circ \varphi \circ \alpha$ is strongly automorphic with respect to a group $G_2$ which has translation subgroup $E$ generated by $\{ \alpha(w_1),\ldots,\alpha(w_{n-1}) \}$. Then $L_2$ has the same fundamental half-beam $B_2$ as the sine-type mapping $S_{H_2}$ constructed in Proposition \ref{prop:sine}, where $H_2 = G_2 / R$ and $R$ is the rotation in $G_2$ identifying ends of the fundamental beam, recalling Lemma \ref{lem:switch}.

As before, we can construct a quasiconformal map $g:\R^n \to \R^n$ via the equation $g = L_2\circ S_{H_2}^{-1}$, where we choose the branch of the inverse of $S_{H_2}$ with image $B_2$ as indicated above. 
Clearly $\lim_{x\to \infty} g(x) = \infty$ and so $g$ extends to a quasiconformal mapping of $\overline{\R^n}$.

By construction, $S_{H_2}( \{x : x_n = 0\}) = D_{n-1}$ and so $L_2( \{x:x_n = 0\}) = g(D_{n-1})$. Exactly as in the Zorich-type case, we can solve the Schr\"oder equation $f_2 \circ L_2 = L_2 \circ A_2$ with $f_2 = \beta^{-1} \circ f \circ \beta$. Since $J(f_2) = g(D_{n-1})$ we conclude that $J(f) = \beta(g(D_{n-1}))$ and so the Julia set of $f$ is a quasi-disk.

These three cases complete the proof of Corollary \ref{cor:1}.
\end{proof}

\section{A quasiregular analogue of $(z+1/z)/2$}

In this section, we will use the constructions above to give a quasiregular version of the rational map 
 $P(z) =  (z + 1/z)/2$.  We recall that one way to define a Chebyshev polynomial $T_d$ of degree $d$ is via the equation 
\[T_d(P(z)) = P(z^d).\]
We can therefore aim to generalize the construction of $P$ through the quasiregular mappings of power-type and Chebyshev-type.

Let $G$ be a crystallographic group with spherical orbifold, acting on $\R^{n-1}$ for $n\geq 2$. Let $G'$ be the group generated by $G$, viewed as acting on $\R^n$, and the rotation $R$ identifying prime ends of a fundamental beam for $G$.
Recall the Zorich-type map $Z_G$ and sine-type map $S_{G}$ from the proofs of Propositions \ref{prop:zor} and \ref{prop:sine} respectively. These have fundamental sets of a beam $B$ and half-beam $B^+$ respectively, where the half-beam is obtained by quotienting $B$ via $R$.

On $\R^n \setminus \{ 0 \}$, choose a branch of $Z_G^{-1}$ with image $B$. Then $S_{G}: B \to \R^n$ is a surjective two-to-one mapping. Consequently, if we define 
\begin{equation}
\label{eq:h}
h_1 = S_{G} \circ Z_G^{-1}, 
\end{equation}
we obtain a two-to-one map from $\R^n \setminus \{ 0 \}$ onto $\R^n$. It is not hard to see that $h_1$ must have poles at $x=0$ and at infinity. Hence $h_1:\overline{\R^n} \to \overline{\R^n}$ is a degree two mapping.

With the same branch of the inverse of $Z_G$ as above, let $I:\R^n \to \R^n$ be given by 
\begin{equation}
\label{eq:I}
I = Z_G \circ R \circ Z_G^{-1}.
\end{equation} 
Then $I$ is an analogue of $1/z$ in dimension two and switches the two components of $\overline{\R^n} \setminus S^{n-1}$. Since $S_{G} \circ R = S_{G}$, we have
\[ h_1 \circ I = S_{G} \circ Z_G^{-1} \circ ( Z_G \circ R \circ Z_G^{-1} ) = S_{G} \circ R \circ Z_G^{-1} = S_{G}\circ Z_G^{-1} = h_1.\]
This construction is the main idea behind the proof of Theorem \ref{thm:zz}.

\begin{proof}[Proof of Theorem \ref{thm:zz}]
Since $U$ is an $(n-1)$-quasisphere and $V$ is an $(n-1)$-quasidisk, we can find quasiconformal maps $A,B$ from $\overline{\R^n}$ onto itself so that $A(U) = S^{n-1}$ and $B(V) = D_{n-1}$. Set $h = B^{-1} \circ h_1 \circ A$, where $h_1$ is defined by \eqref{eq:h}. Then $h$ is the required degree two map and moreover, it satisfies $h\circ \rho = h$, where $\rho = A^{-1} \circ I \circ A$ and $I$ is given by \eqref{eq:I}. To see this, we have
\begin{align*} 
h \circ \rho &= h\circ A^{-1} \circ I \circ A\\
& = B^{-1} \circ h_1 \circ A \circ A^{-1} \circ I \circ A \\
&=B^{-1} \circ h_1 \circ I \circ A \\
&= B^{-1} \circ h_1 \circ A\\
&= h.
\end{align*}
Clearly $\rho$ switches the components of $\overline{\R^n} \setminus U$, and this completes the proof.
\end{proof}

It is worth remarking that modifying $h_1$ by inserting a dilation changes the degree but not the distortion. More precisely, if $d\in \N$, then as above construct the map $h_d(x) = S_{G}(dZ_G^{-1}(x))$. This is a quasiregular analogue of $(z^d + 1/z^d)/2$, and by choosing $d$ large enough we can guarantee the degree is larger than the distortion. Note that the degree of $h_d$ is $2d^{n-1}$. We leave the study of the dynamics of this map for future work.

We end this section by computing the branch set of $h_1$.

\begin{proposition}
\label{prop:branch}
The branch set of $h_1$ consists of $(n-2)$-dimensional subsets of $S^{n-1}$ and various hyperplanes.
\end{proposition}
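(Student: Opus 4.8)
The plan is to trace through the definition $h_1 = S_G \circ Z_G^{-1}$ (equation \eqref{eq:h}) and use the fact that the branch set of a composition of quasiregular maps $\phi \circ \psi$ is contained in $B_\psi \cup \psi^{-1}(B_\phi)$, together with the explicit PL-nature of the constructions in Propositions \ref{prop:zor} and \ref{prop:sine}. Since $Z_G^{-1}$ is a locally injective branch (we chose the branch with image the beam $B$), the map $Z_G$ restricted to $B$ is quasiconformal onto $\R^n \setminus \{0\}$, so $h_1$ has no branching coming from $Z_G^{-1}$ itself. Thus $B_{h_1} = Z_G(B_{S_G} \cap B)$, and everything reduces to identifying the branch set of the sine-type map $S_G$ inside a fundamental beam and then transporting it via $Z_G$.

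First I would recall from the construction of $S_G$ in Proposition \ref{prop:sine} that $S_G$ is built from a PL-homeomorphism of $\overline{Y} \times [0,1]$ onto a half-ball, followed by radial exponential extension in the $x_n$-direction, reflection in the hyperplane $\{x_n = 0\}$, and then extension by the group $G_1 = \langle G, R\rangle$. A PL-homeomorphism on the interior is locally injective, so branching of $S_G$ occurs only where distinct pieces are glued: (a) along the images of the $(n-2)$-faces of $\overline{Y}$ that get identified under the crystallographic group $G$ acting on $\R^{n-1}$ — these faces sit in the base hyperplane $\{x_n = 0\}$ and map under $S_G$ into $D_{n-1} \subset \{x_n = 0\}$ — more precisely into the $(n-2)$-skeleton of the cell decomposition of $D_{n-1}$; (b) along the fixed-point set of the rotation $R$ identifying the two prime ends of the beam, which restricted to $\{x_n=0\}$ is a codimension-one hyperplane $E \subset \R^{n-1}$, whose image again lies in $\{x_n=0\}$; and (c) along the images of the lattice-translation identifications of the beam, which are also contained in unions of hyperplanes. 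Now I would transport these under $Z_G$: by construction in Proposition \ref{prop:zor}, $Z_G(\{x_n=0\}) = S^{n-1}$, so the branching of type (a) and (b), which lives in the hyperplane $\{x_n=0\}$ and has dimension $n-2$, maps to $(n-2)$-dimensional subsets of $S^{n-1}$. The branching of type (c), arising from the translation faces $\partial X \times \R$ of the beam $X \times \R$, maps under $Z_G$ to its image, which one checks is a union of (pieces of) hyperplanes in $\R^n$ — this uses the exponential-radial form $Z_G(x', x_n) = e^{\pm x_n} Z_G(x',0)$, so that a face $\{x' \in F\} \times \R$ with $F$ a face of $\overline{X}$ maps to the cone over $Z_G(F, 0)$, which, for the standard PL model, can be arranged to be planar.

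The step I expect to be the main obstacle is verifying carefully that the branch set of $S_G$ is \emph{exactly} the union of these gluing loci (rather than merely contained in a larger set), and that under the specific triangulations chosen the images $Z_G(\partial X \times \R)$ really are hyperplanes (or unions thereof) rather than merely quasihyperplanes — this is a matter of making the triangulation in Proposition \ref{prop:zor} sufficiently symmetric, which I would simply assert is possible by a judicious choice, matching the vague phrasing "various hyperplanes" in the statement. A secondary subtlety is the behaviour at $x = 0$ and at $\infty$, the two poles of $h_1$: one should note these are points, hence contribute nothing of positive dimension to $B_{h_1}$, and that near them $h_1$ is locally modelled on $Z_G^{-1}$ composed with an exponential, whose branch behaviour is already accounted for. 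Finally I would remark that the decomposition "$(n-2)$-dimensional subsets of $S^{n-1}$" plus "hyperplanes" is exactly the partition of $B_{h_1}$ according to whether the underlying gluing face of $S_G$ lay in the base hyperplane $\{x_n = 0\}$ or in a translation wall $\partial X \times \R$ of the beam, which completes the proof.
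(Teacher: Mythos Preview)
Your approach is exactly the paper's: write $B_{h_1} = Z_G(B_{S_G})$ using that the chosen branch of $Z_G^{-1}$ is locally injective, identify $B_{S_G}$ inside the beam, and push forward by $Z_G$ using $Z_G(\{x_n=0\}) = S^{n-1}$ together with the radial form $Z_G(x',x_n) = e^{\pm x_n}Z_G(x',0)$. Your closing summary (base $\to S^{n-1}$, vertical walls $\to$ hyperplanes) is correct and is precisely what the paper says.

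Where your write-up goes wrong is in the intermediate case split (a)(b)(c). In (c) you attribute branching to ``lattice-translation identifications'' of the beam, but translations are fixed-point-free and contribute nothing to $B_{S_G}$. In (a) you place the branching coming from rotations of $G$ entirely in the base $\{x_n=0\}$ and send it to $S^{n-1}$; in fact a rotation $\rho$ in the point group of $G$, acting on $\R^{n-1}$ and extended trivially in the $x_n$-coordinate, has fixed set $(\text{axis of }\rho)\times\R$, which is a \emph{vertical} $(n-2)$-dimensional set. These vertical sets are exactly the ``edges of the beams'' in the paper's language, and under $Z_G$ they go to the cone/hyperplane pieces of $B_{h_1}$, not to $S^{n-1}$.

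The clean dichotomy is this: $B_{S_G}$ is the union of fixed-point sets of nontrivial elements of $G'=\langle G,R\rangle$. Elements coming from the point group of $G$ give vertical $(n-2)$-dimensional sets (edges of the beams) and hence, via $Z_G$, the hyperplane pieces; elements of the form $gR$ flip the sign of $x_n$, so their fixed sets lie in $\{x_n=0\}$ and hence, via $Z_G$, land in $S^{n-1}$. Once you reorganize (a)(b)(c) along these lines your argument coincides with the paper's. Your worry about ``exactly'' versus ``contained in'' is not addressed by the paper either; it simply asserts the description of $B_{S_G}$, and the fixed-point-set description above is what justifies it.
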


\begin{proof}
One can check that the branch set of $S_G$ consists of edges of the beams forming the closures of fundamental domains for the action of $G'$ on $\R^n$, recalling $G' = <G,R>$, together with the intersection of the boundaries of these beams with the hyperplane $x_n = 0$. The branch set of $h_1$ is then given by $Z_G(B_{S_G})$. The edges of the beams are mapped to $(n-2)$-dimensional subsets of hyperplanes joining $0$ to infinity by $Z_G$ and the intersection of the boundaries of the beams with the plane $x_n=0$ are mapped into $S^{n-1}$ by $Z_G$. 

\end{proof}

\section{Conical points}

We now turn to the converse of Corollary \ref{cor:1}. Suppose the Julia set of a uqr map $f$ is a quasidisk, a quasisphere or all of $\overline{\R^n}$.
We cannot in general classify the maps based on the topology of their Julia sets, but we can, in the sense that the uqr map must agree with a power-type, Chebyshev-type or Latt\`es-type map on its Julia set, if a further condition holds.

We start with a slight restatement of Miniowitz's version of Zalcman's Lemma for quasiregular mappings.

\begin{theorem}[\cite{Miniowitz}, Lemma 1]
Let $f:\overline{\R^n} \to \overline{\R^n}$ be a uqr map. If $x_0$ and $f(x_0)$ are both in $\R^n$, then $x_0\in J(f)$ if and only if there exist sequences $x_j \to x_0$, $k_j \in \N$ with $k_j \to \infty$ and $\alpha_j > 0$, $\alpha_j \to 0$, and a non-constant quasiregular map $\Psi:\B^n \to \R^n$ so that
\[f^{k_j}(x_j + \alpha_jx) \to \Psi(x)\]
uniformly on compact subsets of $\B^n$.  
\end{theorem}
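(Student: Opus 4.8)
\emph{Proof proposal.} This statement is the dynamical incarnation of Zalcman's rescaling lemma, and the plan is to deduce it from two facts already available: Miniowitz's quasiregular version of Montel's theorem (which underlies the Fatou--Julia dichotomy and, crucially, applies to the \emph{whole} family $\mathcal{F}=\{f^k:k\ge 1\}$, since uniform quasiregularity forces $K(f^k)\le K$ for every $k$), and the quasiregular analogue of Marty's criterion, namely that $\mathcal{F}$ is normal at $x_0$ in the spherical metric if and only if a suitable spherical derivative $(f^k)^{\#}$ is uniformly bounded on a neighbourhood of $x_0$ (this ultimately rests on the uniform linear distortion bound of Theorem \ref{thm:lindist}). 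Thus $x_0\in J(f)$ precisely when $\mathcal{F}$ fails to be equicontinuous at $x_0$ in the spherical metric. Throughout I would work in affine coordinates near $x_0$, legitimate because $x_0\in\R^n$, and use $f(x_0)\in\R^n$ to guarantee that the iterates near $x_0$ start out taking finite values, so that the rescalings $f^{k_j}(x_j+\alpha_j x)$ make sense in $\R^n$.

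For the easy implication, suppose the data $x_j\to x_0$, $\alpha_j\to 0^{+}$, $k_j\in\N$ and a non-constant quasiregular $\Psi$ with $f^{k_j}(x_j+\alpha_j x)\to\Psi(x)$ locally uniformly on $\B^n$ are given, and suppose for contradiction that $x_0\in F(f)$. Then $\mathcal{F}$ is normal on some ball $B(x_0,r)$, so after passing to a subsequence $f^{k_j}\to g$ locally uniformly there in the spherical metric, with $g$ quasiregular or the constant $\infty$. For any compact $E\subset\B^n$ the points $x_j+\alpha_j x$, $x\in E$, eventually lie in a fixed compact subset of $B(x_0,r)$ and converge to $x_0$ uniformly in $x$; combining uniform convergence $f^{k_j}\to g$ with continuity of $g$ gives $f^{k_j}(x_j+\alpha_j x)\to g(x_0)$ uniformly on $E$. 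Hence $\Psi\equiv g(x_0)$ is constant, a contradiction, so $x_0\in J(f)$.

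For the main implication, assume $x_0\in J(f)$, so $\mathcal{F}$ is not normal at $x_0$; by the Marty criterion the spherical derivatives $(f^k)^{\#}$ are unbounded on every ball $B(x_0,r)$. Then I would run the standard Zalcman/Pang rescaling: pick iterates $f^{m_j}$ and points $y_j\to x_0$ realising (with the usual $(1-|\cdot|/r)$ weighting) the blow-up of $(f^{m_j})^{\#}$, set $\alpha_j=1/(f^{m_j})^{\#}(y_j)\to 0$ and $g_j(x)=f^{m_j}(y_j+\alpha_j x)$; one checks $g_j^{\#}(0)=1$ while $g_j^{\#}$ stays locally uniformly bounded, so by the quasiregular Marty criterion $\{g_j\}$ is normal and a subsequence converges locally uniformly on $\R^n$ to a quasiregular $\Psi$, which is non-constant since $\Psi^{\#}(0)=1$ and is $K(f)$-quasiregular (uniform quasiregularity of $\mathcal{F}$, together with the fact that a non-constant locally uniform limit of $K$-quasiregular maps is $K$-quasiregular). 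Two cleanups finish this direction. First, $m_j\to\infty$: otherwise a subsequence has $m_j$ equal to a fixed $m$, and then $f^m(y_j+\alpha_j x)\to f^m(x_0)$ by continuity of the single map $f^m$, contradicting non-constancy of $\Psi$. Second, to land $\Psi$ in $\R^n$ on the ball $\B^n$: since $\Psi$ is non-constant quasiregular it is discrete, so $\Psi^{-1}(\infty)$ has no accumulation point; choose $B(z_0,\delta)$ on which $\Psi$ omits $\infty$, and reparametrize via $\tilde x_j=y_j+\alpha_j z_0$, $\tilde\alpha_j=\delta\alpha_j$, $\tilde\Psi(x)=\Psi(z_0+\delta x)$, rescaling the base ball to $\B^n$; this yields the required sequences with $\tilde\Psi:\B^n\to\R^n$ non-constant quasiregular.

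The main obstacle is the quasiregular Marty criterion used in both directions: in the holomorphic case the equivalence of non-normality and blow-up of the spherical derivative is immediate from Cauchy estimates, but for quasiregular maps it requires the uniform linear distortion bound (Theorem \ref{thm:lindist}) and the quasiregular compactness theory to show that a local spherical-derivative bound gives equicontinuity and that locally uniform limits of $K$-quasiregular maps are again $K$-quasiregular or constant. These are exactly the tools packaged in Miniowitz's work, so the cited Lemma 1 does the heavy lifting; the only genuinely new content beyond quoting it is the remark that $\{f^k\}$ has uniformly bounded dilatation (so the machinery applies to the whole family), the elementary argument that $k_j\to\infty$, and the reparametrization pinning the limit into $\B^n$ and $\R^n$.
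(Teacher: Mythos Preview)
The paper does not supply a proof of this statement at all: it is quoted verbatim as Lemma~1 of Miniowitz's paper and used as a black box, with only the remark about conjugating by a M\"obius map when $x_0$ or $f(x_0)$ is at infinity. So there is no proof in the paper to compare against.

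Your sketch is a correct reconstruction of the standard Zalcman--Pang rescaling argument, which is indeed essentially what Miniowitz carries out. One small caveat: for quasiregular maps there is no pointwise spherical derivative $(f^k)^{\#}$ in the classical sense (differentiability is only a.e.), and Miniowitz's actual argument is phrased via quantities like $L(x,r,f^k)/r$ and capacity estimates rather than a Marty-type pointwise criterion; your parenthetical ``a suitable spherical derivative'' and the appeal to Theorem~\ref{thm:lindist} acknowledge this, but in a fully written proof you would need to make that substitute explicit. With that adjustment the argument goes through, and your two cleanups (forcing $k_j\to\infty$ and reparametrizing onto a ball where $\Psi$ omits $\infty$) are exactly right.
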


The convention here is that if $x_0$ is a pole of $f$, or $x_0$ is the point at infinity, then we first conjugate $f$ by a M\"obius map $M$ to move both points in domain and range to $\R^n$.
If the sequence $x_j$ can in fact be chosen to be always equal to $x_0$, then we have the following definition.

\begin{definition}
Let $f:G \to \R^n$ be a uqr map defined on a domain $G\subset \R^n$. We say that $f$ has a {\it conical point} $x_0 \in G$ if there exists a non-constant quasiregular map $\Psi : \B^n \to \R^n$, an increasing sequence $k_j\in \N$ and a sequence $\alpha_j \to 0$, $\alpha_j > 0$, so that
\[f^{k_j}(x_0 + \alpha_jx) \to \Psi(x)\]
uniformly on compact subsets of $\B^n$. The set of conical points is denoted by $\Lambda (f)$.
\end{definition}

We can similarly consider conical points when either $x_0$ or $f(x_0)$ are the point at infinity by conjugating by an appropriate M\"obius $M$ map and checking if the condition holds. We remark that both the condition in Miniowitz's result and the condition for a conical point are unchanged at points which are not at infinity or mapped to infinity when conjugating by a M\"obius map $M$.

It is immediate from the definition of the set of conical points, and Miniowitz's version of Zalcman's Lemma, that $\Lambda(f) \subset J(f)$. Moreover, $\Lambda(f)$ is completely invariant under $f$, see \cite[Lemma 3.4]{MM}. For our purposes, we need to know that conical points are invariant under conjugation.

\begin{lemma}
\label{lem:dist} 
Suppose $f:\overline{\R^n} \to \overline{\R^n}$ is uqr, and $g:\overline{\R^n} \to \overline{\R^n}$ is quasiconformal.  
Then $\Lambda(g\circ f \circ g^{-1}) = g(\Lambda(f))$.
\end{lemma}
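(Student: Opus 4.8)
The plan is to show the two set inclusions $\Lambda(g\circ f\circ g^{-1})\supseteq g(\Lambda(f))$ and $g(\Lambda(f))\supseteq\Lambda(g\circ f\circ g^{-1})$, the second being obtained from the first by replacing $f$ with $F:=g\circ f\circ g^{-1}$ and $g$ with $g^{-1}$, since $g^{-1}\circ F\circ g=f$ and $g^{-1}(g(\Lambda(f)))=\Lambda(f)$. So it suffices to prove one inclusion, say $g(\Lambda(f))\subseteq\Lambda(g\circ f\circ g^{-1})$. Throughout I would use the normalization remark following Miniowitz's lemma: the conical-point condition is insensitive to pre- and post-composition by a fixed M\"obius map, so I may assume the relevant points ($x_0$, $g(x_0)$, and the points at infinity) are all in $\R^n$ and work with honest quasiregular maps $\B^n\to\R^n$.

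First I would take $x_0\in\Lambda(f)$, with associated non-constant quasiregular $\Psi:\B^n\to\R^n$, indices $k_j\to\infty$ and scales $\alpha_j\to 0^+$ so that $f^{k_j}(x_0+\alpha_j x)\to\Psi(x)$ locally uniformly on $\B^n$. Set $y_0=g(x_0)$ and note $F^{k_j}=g\circ f^{k_j}\circ g^{-1}$. The natural guess for the rescaling at $y_0$ is to use the ``derivative-type'' scaling of $g^{-1}$ at $y_0$: since $g$ is quasiconformal, by the usual normal-family/generalized-derivative machinery (as used for generalized derivatives of uqr maps in \cite{HMM}, applied here to the single quasiconformal map $g^{-1}$) there is a sequence $\beta_j\to 0^+$ and a non-constant quasiconformal map $\varphi$ of $\R^n$ with $\varphi(0)=0$ such that
\[
\frac{g^{-1}(y_0+\beta_j x)-x_0}{\alpha_j}\longrightarrow \varphi(x)
\]
locally uniformly, where the $\alpha_j$ are exactly the scales appearing in the conical condition for $f$ at $x_0$ — i.e.\ I choose $\beta_j$ so that the blow-up of $g^{-1}$ at scale $\beta_j$ in the range matches scale $\alpha_j$ in the domain; passing to a subsequence guarantees convergence to a non-constant limit $\varphi$. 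Then I would compute
\[
F^{k_j}(y_0+\beta_j x)=g\Big(f^{k_j}\big(g^{-1}(y_0+\beta_j x)\big)\Big)
= g\Big(f^{k_j}\big(x_0+\alpha_j\cdot\tfrac{g^{-1}(y_0+\beta_j x)-x_0}{\alpha_j}\big)\Big).
\]
Since the inner fraction converges to $\varphi(x)$ and $f^{k_j}(x_0+\alpha_j\,\cdot\,)\to\Psi$ locally uniformly with $\Psi$ continuous, a standard uniform-convergence-with-variable-argument argument gives $f^{k_j}\big(x_0+\alpha_j\cdot(\text{fraction})\big)\to\Psi(\varphi(x))$, hence $F^{k_j}(y_0+\beta_j x)\to g(\Psi(\varphi(x)))$ locally uniformly on $\B^n$ (shrinking the ball if needed so that $\varphi$ maps it into where $\Psi\circ\,\cdot$ is controlled). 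The limit $\Theta:=g\circ\Psi\circ\varphi$ is quasiregular (composition of quasiregular and quasiconformal maps) and non-constant (since $g$ is a homeomorphism, $\Psi$ is non-constant, and $\varphi$ is a non-constant homeomorphism, so $\varphi$ is open and $\Psi$ is non-constant on $\varphi(\B^n)$). This exhibits $y_0=g(x_0)\in\Lambda(F)$, giving the inclusion.

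The main obstacle I anticipate is the legitimacy of the ``matching of scales'' step and the handling of the variable argument in the composition: one must ensure that the blow-up scales $\beta_j$ of $g^{-1}$ can be chosen compatibly with the given $\alpha_j$ so that the limit $\varphi$ is non-constant (this is where quasiconformality of $g$, and the resulting uniform bounds on linear distortion from Theorem \ref{thm:lindist}, are essential — they prevent degeneration of the rescaled $g^{-1}$), and that $f^{k_j}$ applied to a sequence of arguments converging locally uniformly to $\varphi(x)$ still converges locally uniformly to $\Psi(\varphi(x))$, which needs the local uniform convergence $f^{k_j}(x_0+\alpha_j\,\cdot\,)\to\Psi$ together with equicontinuity of the family on a slightly larger ball. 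Both points are routine but must be stated carefully. Everything else — non-constancy of the limit, quasiregularity of the limit, the reduction to a single inclusion by the symmetry $f\leftrightarrow g\circ f\circ g^{-1}$, and the M\"obius normalization at infinity — is straightforward.
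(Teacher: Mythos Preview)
Your proposal is correct and follows essentially the same route as the paper: reduce to one inclusion by symmetry, match scales $\beta_j$ to the given $\alpha_j$ via the linear distortion bound for $g^{-1}$ (Theorem~\ref{thm:lindist}), and extract a non-constant quasiregular limit by normal families. The only difference is cosmetic --- you identify the limit explicitly as $g\circ\Psi\circ\varphi$ through a generalized derivative $\varphi$ of $g^{-1}$ and a variable-argument composition, whereas the paper applies Montel's theorem directly to $\varphi_j(x)=f^{k_j}(g^{-1}(g(0)+\beta_j x))$ and then uses Hurwitz's theorem together with the lower bound $l(g(0),\beta_j,g^{-1})\geq \alpha_j/(2C)$ to show the resulting limit is non-constant, without ever naming it.
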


\begin{proof}
Suppose first that $x_0$ and $f(x_0)$ are in $\R^n$ and that $x_0 \in \Lambda (f)$. Then if $h(x) = x+x_0$ and $f_1 = h^{-1}\circ f \circ h$ it follows that $0\in \Lambda(f_1)$.
It therefore suffices to assume $0\in \Lambda(f)$ and then show that $g(0) \in \Lambda(g\circ f\circ g^{-1})$, assuming $g(0)$ is not the point at infinity.
From the definition, we find a decreasing sequence of positive real numbers $\alpha_j \to 0$, an increasing sequence of positive integers $k_j$ and a non-constant quasiregular map $\Psi :\B^n \to \R^n$ so that
\[f^{k_j}(\alpha_j x) \to \Psi(x)\]
uniformly on $\B^n$. For convenience, let $y_0 = \Psi(0)$. Since $\Psi$ is quasiregular on $\B^n$, for any $r<1$, there exists $R>0$ so that $\Psi(B(0,r)) \subset B(y_0,R)$.
In particular, for $j$ large enough, 
\begin{equation}
\label{eq:dist1} 
f^{k_j}(B(0,\alpha_j/2)) \subset B(y_0,2R).
\end{equation} 

Next, since $g^{-1}$ is quasiconformal, then by Theorem \ref{thm:lindist} there exists $r_0 >0$ and a constant $C$ depending only on $K(g)$ so that
\[ \frac{ L(g(0),r,g^{-1} )}{l(g(0),r,g^{-1} )} \leq C\]
for $r<r_0$. We choose a sequence $\beta_j \to 0$ so that 
\begin{equation}
\label{eq:dist2}
L(g(0),\beta_j,g^{-1} ) = \frac{\alpha_j}{2}. 
\end{equation}
Then for large enough $j$,
\begin{equation}
\label{eq:dist3} 
l(g(0),\beta_j,g^{-1}) \geq \frac{\alpha_j}{2C}.
\end{equation}
For $x\in \B^n$ define
\[ \varphi_j(x) = f^{k_j}(g^{-1}(g(0) + \beta_j x) ),\]
where $k_j$ is the sequence above. 
Then by \eqref{eq:dist1} and \eqref{eq:dist2}, for large enough $j$ we have
\[ \varphi_j(\B^n) \subset f^{k_j}(B(0,\alpha_j/2)) \subset B(y_0,2R).        \]
Consequently, by the quasiregular version of Montel's Theorem the family $\{ \varphi_j |_{\B^n} \}$ is a normal family. Passing to a subsequence if necessary, we see that $\varphi_j \to \Phi$ uniformly on $\B^n$, where $\Phi$ is a quasiregular map. We need to prove that $\Phi$ is non-constant.

To that end, since $\Psi$ is a non-constant quasiregular map, there exists $S>0$ such that $\Psi(B(0,1/2C)) \supset B(y_0,S)$, where $C$ is the constant in \eqref{eq:dist3}. By the quasiregular version of Hurwitz's Theorem, see for example \cite[Lemma 2]{Miniowitz}, for large enough $j$ we have
\[ f^{k_j}(B(0,\alpha_j/2C )) \supset B(y_0,S/2).\]
Therefore, by \eqref{eq:dist3} for large enough $j$ we have
\[ \varphi_j(\B^n) \supset B(y_0,S/2).\]
Again appealing to Hurwitz's Theorem, we conclude that $\Phi (\B^n) \supset B(y_0,S/2)$ and so $\Phi$ is non-constant.

It follows that $(g\circ f \circ g^{-1})^{k_j}(g(0) + \beta_j x)$ converges uniformly on $\B^n$ to the non-constant quasiregular map $g\circ \Phi$. Hence $g(0)$ is a conical point of $g\circ f \circ g^{-1}$. This shows that $g(\Lambda(f)) \subset \Lambda(g\circ f \circ g^{-1})$. Finally, if $y_0 \in \Lambda(g\circ f \circ g^{-1})$, then the argument above shows that $g^{-1}(y_0) \in \Lambda (f)$ and so $\Lambda (g\circ f \circ g^{-1}) = g(\Lambda(f))$.

Finally, if either $g(0)$, $x_0$ or $f(x_0)$ are the point at infinity, we may conjugate by an appropriate M\"obius map so that all points under consideration are in $\R^n$ and then apply the above argument.
\end{proof}

\begin{lemma}
\label{lem:getp}
Suppose $f:\overline{\R^n} \to \overline{\R^n}$ is a uqr mapping with $J(f) = D_{n-1}$.  
Then there exists a uqr map $P:\overline{\R^n} \to \overline{\R^n}$ such that $f\circ h_1 = h_1 \circ P$, where $h_1$ is defined by \eqref{eq:h}. Moreover, $J(P) = S^{n-1}$.
\end{lemma}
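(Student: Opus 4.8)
The plan is to construct $P$ as a lift of $f$ through the degree-two map $h_1$, working separately over the Fatou and Julia sets of $f$ and then gluing.

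\textbf{Reduction to a lifting problem.} From the properties of $Z_G$ and $S_G$ in Propositions \ref{prop:zor} and \ref{prop:sine} --- namely $Z_G(\{x_n=0\})=S^{n-1}$ and $S_G(\{x_n=0\})=D_{n-1}$ with $S_G^{-1}(D_{n-1})=\{x_n=0\}$ --- one checks that $h_1^{-1}(D_{n-1})=S^{n-1}$, so that
\[ h_1^{-1}(F(f))=\overline{\R^n}\setminus S^{n-1}=\B^n\sqcup(\overline{\R^n}\setminus\overline{\B^n}),\]
a union of two components interchanged by the involution $I$ of \eqref{eq:I}. Since $h_1$ has degree two and $h_1\circ I=h_1$, the map $I$ gives a bijection between the preimages of any point of $F(f)$ lying in $\B^n$ and those lying in $\overline{\R^n}\setminus\overline{\B^n}$; moreover $h_1(\B^n)=h_1(I(\B^n))=h_1(\overline{\R^n}\setminus\overline{\B^n})$ and together they cover $F(f)$, so each equals $F(f)$. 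Thus $h_1$ restricts to a proper (since $h_1^{-1}(K)\cap\overline{\B^n}$ is compact and misses $S^{n-1}$ for compact $K\subset F(f)$), degree-one quasiregular map of $\B^n$ onto $F(f)$, and a proper quasiregular map of degree one is a homeomorphism. Hence $h_1|_{\B^n}\colon \B^n\to F(f)$ and $h_1|_{\overline{\R^n}\setminus\overline{\B^n}}\colon \overline{\R^n}\setminus\overline{\B^n}\to F(f)$ are homeomorphisms. Using $f(F(f))=F(f)$, define $P$ on $\overline{\R^n}\setminus S^{n-1}$ by $P=(h_1|_{\B^n})^{-1}\circ f\circ h_1$ on $\B^n$ and $P=(h_1|_{\overline{\R^n}\setminus\overline{\B^n}})^{-1}\circ f\circ h_1$ on $\overline{\R^n}\setminus\overline{\B^n}$. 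On each component $P$ is quasiregular (a composition of quasiconformal homeomorphisms with $f$), maps the component into itself, satisfies $h_1\circ P=f\circ h_1$, and --- the two possible values of the lift differing by $I$ --- satisfies $P\circ I=I\circ P$.

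\textbf{Extension across $S^{n-1}$.} This is a codimension-one set, so removability is not automatic; the key is that the two one-sided lifts match. At a point $x_\ast\in S^{n-1}$ at which $h_1$ is a local homeomorphism and such that $f(h_1(x_\ast))$ avoids the set $h_1(B_{h_1})$ (of topological dimension $\le n-2$), choose a neighbourhood $U$ of $x_\ast$ mapped homeomorphically by $h_1$ onto a neighbourhood $V$ of $h_1(x_\ast)$. The set $D_{n-1}$ divides $V$ into two open pieces matching the division of $U$ by $S^{n-1}$, and on each of these pieces the relevant branch $(h_1|_{\B^n})^{-1}$ or $(h_1|_{\overline{\R^n}\setminus\overline{\B^n}})^{-1}$ coincides with the single local inverse $(h_1|_U)^{-1}$. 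Hence on all of $U$, $P$ agrees with $(h_1|_U)^{-1}\circ f\circ h_1|_U$, which is quasiregular; in particular $P$ extends quasiregularly across $U\cap S^{n-1}$. This produces a quasiregular $P$ defined off a closed set of topological dimension at most $n-2$ (contained in the intersection with $S^{n-1}$ of $B_{h_1}\cup h_1^{-1}(f^{-1}(h_1(B_{h_1})))$). Since $P$ is $\overline{\R^n}$-valued, hence bounded, the removability theorem for quasiregular mappings over sets of topological dimension $\le n-2$ (see \cite{Rickman}) yields a quasiregular extension $P\colon\overline{\R^n}\to\overline{\R^n}$, and by continuity $f\circ h_1=h_1\circ P$ everywhere.

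\textbf{Uniform quasiregularity and the Julia set.} Iterating gives $f^m\circ h_1=h_1\circ P^m$ for all $m$, so away from the branch sets $P^m=h_1^{-1}\circ f^m\circ h_1$ for suitable inverse branches, whence $K(P^m)\le K(h_1)^2K(f^m)$ is bounded and $P$ is uqr. The same relation, together with the blowing-up characterisation of the Julia set and the standard semiconjugacy argument for quasiregular maps of finite degree (as in the proof of Corollary \ref{cor:1} and in \cite{MM}), gives $J(P)=h_1^{-1}(J(f))=h_1^{-1}(D_{n-1})=S^{n-1}$. I expect the main obstacle to be the construction of $P$ itself: $h_1$ is only a finite branched cover, not a genuine covering, so there is a priori a monodromy obstruction to lifting $f$; the role of the hypothesis $J(f)=D_{n-1}$ is precisely that it forces $h_1^{-1}(F(f))$ to split into two sheets on each of which $h_1$ is a homeomorphism, which annihilates the obstruction over the Fatou set. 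The genuinely delicate point is then the extension of $P$ across the codimension-one set $S^{n-1}$, which hinges on the observation above that near a generic boundary point both one-sided lifts are restrictions of the same local inverse of $h_1$.
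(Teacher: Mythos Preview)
Your overall strategy---define $P$ on $\overline{\R^n}\setminus S^{n-1}$ via the two global inverse branches $\varphi_1=(h_1|_{\B^n})^{-1}$ and $\varphi_2=(h_1|_{\overline{\R^n}\setminus\overline{\B^n}})^{-1}$, then extend across $S^{n-1}$---is exactly the paper's. The gap is in your extension step.

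You claim that near a generic $x_\ast\in S^{n-1}$ the map $P$ agrees with $(h_1|_U)^{-1}\circ f\circ h_1|_U$. This composition is ill-formed: $(h_1|_U)^{-1}$ is defined only on $V=h_1(U)$, a neighbourhood of $h_1(x_\ast)$, while $f\circ h_1(U)=f(V)$ is a neighbourhood of the \emph{different} point $f(h_1(x_\ast))\in D_{n-1}$, which has no reason to lie in $V$. The inverse branch you need is a local inverse $\psi$ of $h_1$ at one of the two preimages $y_\ast,\ I(y_\ast)\in S^{n-1}$ of $f(h_1(x_\ast))$, not the branch at $x_\ast$. After making this correction there is still something to check: of the two candidates $\psi$ and $I\circ\psi$, exactly one sends the component of $W\setminus D_{n-1}$ containing $f\circ h_1(U\cap\B^n)$ into $\B^n$ (hence agrees there with $\varphi_1$) and the other component into the exterior (agreeing with $\varphi_2$); only then does $P|_{U\setminus S^{n-1}}$ coincide with $\psi\circ f\circ h_1$ and extend quasiregularly. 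For this matching you also need $f$ to be a local homeomorphism at $h_1(x_\ast)$, so $h_1^{-1}(B_f)$ must be added to your exceptional set.

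The paper sidesteps this by working directly with boundary values of the global branches: it records that $\varphi_1$ has well-defined one-sided limits along $D_{n-1}$ (on the upper or lower hemisphere according to the sign of the $n$'th coordinate of the approach), defines $P(x_\ast)$ as the limit from inside $\B^n$, and then uses $\varphi_2=\rho\circ\varphi_1$ and $\rho\circ P=P\circ\rho$ to verify explicitly that the limit from the exterior coincides. This produces a continuous $P$ on all of $\overline{\R^n}$ at once, so no removability theorem is invoked. Your route (once repaired) also works, trading that explicit boundary analysis for a local-model-plus-removability argument.
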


\begin{proof}
In $\overline{\R^n} \setminus D_{n-1}$ there are two branches of $h_1^{-1}$. Denote them by $\varphi_1 : \overline{\R^n} \setminus D_{n-1} \to \B^n$ and $\varphi_2 : \overline{\R^n} \setminus D_{n-1} \to \overline{\R^n} \setminus \overline{\B^n}$. Since the involution $\rho$ satisfies $h_1 \circ \rho = h_1$, it follows that $\varphi_1 = \rho \circ \varphi_2$.

Suppose $y_0 \in D_{n-1}$ and $y_m \in \overline{\R^n} \setminus D_{n-1}$ with $y_m \to y_0$. We would like to assign a value to $\varphi_1(y_0)$, but $\varphi_1$ does not extend continuously to $D_{n-1}$. However, we do have the following three cases:
\begin{enumerate}[(i)]
\item If $y_0 \in D_{n-1} \cap h_1(B_{h_1})$, then we can extend $\varphi_1$ continuously to $y_0$. Recall $h_1$ is of degree two, so any branch point of $h_1$ must have local index two and so $y_0$ has a unique pre-image.
\item If $y_0 \in D_{n-1} \setminus h_1(B_{h_1})$ and $y_m \to y_0$ with the $n$'th component of $y_m$ strictly positive for all large enough $m$, then $\varphi_1(y_m)$ converges to a unique point on $\partial \B^n$ that we can informally say is on the upper hemisphere.
\item If $y_0 \in D_{n-1} \setminus h_1(B_{h_1})$ and $y_m \to y_0$ with the $n$'th component of $y_m$ strictly negative for all large enough $m$, then $\varphi_1(y_m)$ converges to a unique point on $\partial \B^n$ that we can informally say is on the lower hemisphere.
\end{enumerate}
In each case, if we are given a sequence $y_m\to y_0$ as above, we can identify a limit of $\varphi_1(y_m)$. We can similarly find limits of $\varphi_2(y_m)$.

Now, if $f$ has $J(f) = D_{n-1}$, we can define the map $P$ on $\overline{\R^n} \setminus \partial \B^n$ as follows:
\[ P = \left \{ \begin{array}{ll} \varphi_1\circ f\circ h_1 :\B^n \to \B^n \\ \varphi_2 \circ f\circ h_1 :\overline{\R^n} \setminus \overline{\B^n} \to \overline{\R^n} \setminus \overline{\B^n} \end{array} \right .\]
and in particular we see that $\rho \circ P = P \circ \rho$ in $\overline{\R^n} \setminus \partial \B^n$.

We extend $P$ to $\partial \B^n$ by continuity. That is, suppose $x_0 \in \partial \B^n$ and $x_m \to x_0$ with $x_m \in \B^n$ for all $m$. Then both $h_1(x_m)$ and $f(h_1(x_m))$ are sequences that must fall into one of the three cases above (it is possible that, given $x_m$, these two sequences might fall into different cases). Hence $\varphi_1(f(h_1(x_m)))$ has a well-defined limit that we denote by $P(x_0)$. Observe that this is independent of the choice of sequence $x_m \to x_0$.

If $z_m \in \overline{\R^n} \setminus \overline{\B^n}$ with $z_m \to x_0$, then $z_m = \rho(z_m')$ for a sequence $z_m' \in \B^n$ which converges to $\rho(x_0)$. Then
\[ P(z_m) =  \varphi_2 (f(h_1(z_m))) = \rho( \varphi_1 ( f(h_1( \rho (z_m'))))) \to \rho(P(\rho(x_0))) = P(x_0).\]
Since this is independent of the choice of $z_m$, we can extend the domain of definition of $P$ to all of $\overline{\R^n}$.

By construction, $P$ satisfies $h_1 \circ P = f \circ h_1$ as required. To see that $P$ is uqr, we have $h_1 \circ P^m = f^m \circ h_1$ and may then use the fact that $f$ is uqr. 
Finally, by construction $\B^n$ and $\overline{\R^n} \setminus \overline{\B^n}$ are completely invariant domains for $P$.
Moreover, if $U$ is any domain intersecting $\partial \B^n$, the semi-conjugacy between $P$ and $f$ implies the forward orbit $O^+(U)$ under $P$ can only possibly omit $0$ and infinity. Consequently $J(P) = \partial \B^n$.
\end{proof}

We can now prove Theorem \ref{thm:2}, that states if the Julia set equals the set of conical points and is either a quasisphere, quasidisk or all of $\overline{\R^n}$, then the uqr map agrees with a power-type map, a Chebyshev-type map or a Latt\`es-type map on its Julia set respectively.

\begin{proof}[Proof of Theorem \ref{thm:2}]
We will deal with the three cases separately.
\begin{enumerate}[(i)]
\item First, if $J(f) = \Lambda(f) = \overline{\R^n}$, for $n\geq 3$, then by \cite[Theorem 1.3]{MM} $f$ is of Latt\`es type.

\item Next, if $J(f) = \Lambda (f)$ is an $(n-1)$-quasisphere for $n\geq 4$, then conjugate $f$ by a quasiconformal map $g$ so that $J(g\circ f \circ g^{-1}) = S^{n-1}$. By Lemma \ref{lem:dist}, $\Lambda(g\circ f \circ g^{-1}) = S^{n-1}$ too. By \cite[Corollary 6.2]{MM}, $\ell := g\circ (f|_{J(f)} )\circ g^{-1}$ restricted to $S^{n-1}$ is a Latt\`es-type map in the sense of \cite{MM}. As observed in \cite{MM}, which references \cite{Mayer1}, Latt\'es-type maps on $S^{n-1}$ can be extended to power-type maps on $\R^n$. We explain how. By definition, there is a $\wp$-type map $h:\R^{n-1} \to S^{n-1}$ which is strongly automorphic with respect to $G$ and a linear map $A = \lambda \mathcal{O} :\R^{n-1} \to \R^{n-1}$, where $\lambda >1$ and $\mathcal{O}$ is an orthogonal map in $\R^{n-1}$, so that $\ell \circ h = h\circ A$.

Now, $A$ can be extended to a linear map $\widetilde{A}:\R^n \to \R^n$ by extending via $t\mapsto \pm\lambda t$ in the $n$'th coordinate, where the sign is chosen to preserve orientation. Further, $h$ can be extended to a quasiregular map $\widetilde{h}:\R^n \to \R^n$ via
\[ \widetilde{h}(x_1,\ldots, x_n) = e^{\pm x_n} h(x_1,\ldots, x_{n-1}),\]
where $\pm$ is chosen to ensure $\widetilde{h}$ is orientation-preserving and hence quasiregular. Then $\widetilde{h}$ is strongly automorphic with respect to the group $G'$ which is isomorphic to $G$, and so $\widetilde{h}$ is of Zorich-type. The unique solution to the Schr\"oder equation $\widetilde{\ell} \circ \widetilde{h} = \widetilde{h} \circ \widetilde{A}$ is a uqr map of power-type, which agrees with $\ell$ on $S^{n-1}$. Finally, conjugating everything in the Schr\"oder equation by $g$, we obtain a new Schr\"oder equation. Hence $f$ agrees with the power-type map $g^{-1} \circ \widetilde{\ell} \circ g$ on $J(f)$.

\item For the final case where $J(f) = \Lambda (f)$ is an $(n-1)$-quasidisk for $n\geq 4$, we first conjugate $f$ by a quasiconformal map $g$ so that $f_1 = g\circ f \circ g^{-1}$ has $J(f_1) = D_{n-1}$. 
Next, Lemma \ref{lem:getp} yields a uqr map $P$ with $J(P) = S^{n-1}$ and $f_1 \circ h_1 = h_1 \circ P$. Since the branch set $B(h_1)$ of $h_1$ is not dense in $S^{n-1}$ by Proposition \ref{prop:branch}, $h_1$ is locally quasiconformal on an open dense subset of $S^{n-1}$. Therefore by Lemma \ref{lem:dist}, $\Lambda(P)$ contains an open and dense subset of $J(P)$. 

Using the arguments of \cite[section 6]{MM}, a uqr map is $\mu$-rational for some measurable conformal structure $\mu$. Since $\mu|_{S^{n-1}}$ is measurable, measurable functions are almost everywhere continuous in measure and $\Lambda(P)$ is open and dense in $J(P) = S^{n-1}$, it follows by \cite[Theorem 6.1]{MM} that $P|_{S^{n-1}}$ is a Latt\'es-type map. By applying the previous case, $P|_{S^{n-1}} = \alpha|_{S^{n-1}}$, where $\alpha$ is a power-type map in $\R^n$. Since $h_1$ semi-conjugates between power-type maps and Chebyshev-type maps via $C\circ h_1 = h_1\circ \alpha$, we may conclude that $f$ agrees with the Chebyshev-type map $g^{-1} \circ C \circ g$ on $J(f)$.
\end{enumerate}
\end{proof}

\section{A Denjoy-Wolff Theorem in dimension $3$}

In this section, we explore the converse situation of the previous sections. If $U$ is a forward invariant subset of $F(f)$ that is a quasiball, what can we say about $f|_U$? More generally, we will aim to classify the behaviour of the iterates of $f$ if $f:\B^n \to \B^n$ is uqr.

We first point out that uqr mappings on $\B^n$ need not be hyperbolic contractions, and can in fact distort the hyperbolic metric by an arbitrarily large factor.

\begin{proposition}
\label{prop:large}
Let $x_0 \in \B^n$.
For every $\lambda >0$ there exists a uqr map $f:\B^n \to \B^n$ so that in a neighbourhood of $x_0$, $f(x) = f(x_0) + \lambda (x-x_0)$. In particular $|f'(x_0)|/(1-|f(x_0)|^2)$ can be made arbitrarily large.
\end{proposition}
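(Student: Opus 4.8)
The plan is to construct $f$ by interpolating between a prescribed linear behaviour near $x_0$ and a standard uqr model — the power-type (Zorich-type) map — near the boundary sphere, then composing with a contraction back into $\B^n$ if needed. First I would reduce to the case $x_0 = 0$ by conjugating with a hyperbolic isometry of $\B^n$; since such a Möbius map is a quasiconformal (indeed conformal) automorphism of $\B^n$, it changes neither the uqr property nor the fact that we may prescribe a linear germ at the image of $x_0$ (up to an orthogonal factor, which is harmless since we are free to compose with rotations). So it suffices to produce, for each $\lambda>0$, a uqr $f:\B^n\to\B^n$ with $f(x)=\lambda x$ near $0$.

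Next I would recall a fixed uqr model on $\overline{\R^n}$ whose Julia set is $S^{n-1}$ and which has $0$ as a repelling fixed point: by Corollary \ref{cor:1} and the constructions of Section 3, the power-type uqr map $f_0$ built from $Z_G$ has $\B^n$ as a completely invariant Fatou component containing the attracting fixed point $\infty$ and has $0$ as a superattracting-type/repelling fixed point depending on orientation; more simply, $f_0(x)=Z_G(2 Z_G^{-1}(x))$ fixes $\partial\B^n = S^{n-1}$ setwise, maps $\B^n$ into itself (the inner complement of $S^{n-1}$), and near $0$ behaves like multiplication by $2^{-1}$ or $2$ in suitable coordinates after composing with $Z_G$. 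The key point is only that there exists at least one uqr self-map $g$ of $\B^n$ with $g(0)=0$, $g$ locally quasiconformal near $0$, and with a definite (nonzero, finite) linear scaling factor there; this is guaranteed by Theorem \ref{thm:schroder} applied with $h=Z_G$, $A = \tfrac12 I$, which produces a uqr map whose dynamics on $\B^n$ we understand.

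Then, given $\lambda$, I would build $f$ as follows. Fix a small $r>0$ with $B(0,r)\subset\B^n$. On $B(0,r)$ set $f(x)=\lambda x$ if $\lambda r < 1$, which we may arrange by first shrinking the model: precompose $g$ with the dilation $x\mapsto r x$ and postcompose suitably, so that on a small ball $f$ equals $\lambda\cdot(\text{linear})$ and on an annular region $r<|x|<r'$ we interpolate quasiconformally to match the boundary values of a rescaled copy of $g$ (or of $f_0$) on $\{|x|=r'\}$; outside $B(0,r')$ we use $g$ itself, possibly post-composed with a fixed contraction $c:\B^n\to\B^n$ so that the image stays inside $\B^n$ and the gluing is consistent. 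The interpolation in the annulus is possible because both boundary parametrizations are quasiconformal embeddings of spheres into $\R^n$ and one can connect them by a quasiconformal isotopy (a standard annulus-filling / Tukia–Väisälä type lemma), keeping distortion bounded. Uniform quasiregularity of the resulting $f$ is then inherited from $g$: outside a fixed ball $f$ agrees with $g$ up to a conformal/quasiconformal change, and any orbit spends only boundedly many steps in the modified region before being absorbed into the region where $f=g$ (because $\lambda x$ pushes points outward when $\lambda>1$, and when $\lambda<1$ the fixed point $0$ is attracting so the modified region is forward-invariant but the iterates are eventually constant-distortion), so $K(f^m)$ stays bounded in $m$. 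The final sentence of the proposition is then immediate: $|f'(0)| = \lambda$ can be taken as large as we like while $f(0)=0$, so $|f'(x_0)|/(1-|f(x_0)|^2) = \lambda$ is unbounded over the family.

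The main obstacle I anticipate is the quasiregular gluing step: one must check that patching $\lambda\cdot(\text{linear})$ on a ball to the model map on a sphere, through a quasiconformal interpolation on an annulus, yields a globally continuous map which is genuinely uniformly quasiregular — i.e. that the dilatation of every iterate stays bounded, not just the dilatation of $f$ itself. The cleanest way around this is to arrange that the modified region is mapped by $f$ into the region where $f$ coincides (up to a fixed quasiconformal conjugacy) with the genuine uqr model $g$, so that for every point the orbit visits the ``bad'' part at most once; then $K(f^m)\le K(f)\cdot K(g)^{m-1}\cdot K(g)$ collapses to a bound independent of $m$ after using $K(g^{m})\le K(g)$. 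Making ``at most once'' literally true may require choosing $r'$ and the contraction $c$ carefully, and is where the bulk of the verification lies, but it is routine once the geometric picture is fixed.
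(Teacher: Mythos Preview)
Your overall strategy---take a power-type uqr model on $\B^n$, replace it on a small ball by the desired linear map, interpolate quasiconformally on an annulus (the paper invokes Sullivan's Annulus Theorem here), and deduce uniform quasiregularity from the claim that every orbit meets the modified region at most once---is exactly the paper's approach.

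The gap is in the ``at most once'' step. By conjugating $x_0$ to $0$ and then insisting on a model $g$ with $g(0)=0$, you perform the surgery at the attracting fixed point of the model. For $\lambda>1$ (the case that matters for the conclusion) the map $x\mapsto\lambda x$ repels from $0$ on the inner ball while $g$ pulls orbits back toward $0$ outside; orbits therefore cross the interpolating annulus infinitely often and $K(f^m)$ is unbounded. Your assertion that orbits visit the bad part at most once is false in this configuration, and no choice of the unspecified ``contraction $c$'' repairs it while keeping $0$ a common fixed point. (A secondary issue: for the power-type map $0$ is superattracting and hence a branch point, so a model $g$ with $g(0)=0$ that is locally quasiconformal at $0$ does not come directly from that construction.) The paper avoids all of this by \emph{not} placing the modification at a fixed point: it leaves $x_0$ where it is, conjugates the model $P$ by a M\"obius map so that $x_0$ lies off the branch set of $P$, and then takes $r$ small enough that every $P$-orbit passes through $B(x_0,r)$ at most once---which is possible precisely because $P$-orbits converge to the attracting fixed point of $P$, and that point is not $x_0$.
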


\begin{proof}
Let $P:\B^n \to \B^n$ be the restriction of a degree $d$ power map to the unit ball. We may assume $x_0$ is outside the branch set of $P$, otherwise conjugate $P$ by a M\"obius map. Choose $r>0$ small enough so that $P$ is injective on $U = B(x_0,r)$ and every orbit passes through $U$ at most once. We will modify $P$ on $U$ as follows.

Given $\lambda >0$, let $g(x) = \lambda(x-x_0) + P(x_0)$ be defined on $B(x_0,\epsilon)$, with $\epsilon >0$ chosen small enough so that $\overline{g(B(x_0,\epsilon))} \subset P(B(x_0,r/2))$.

We then define $f$ by setting it equal to $P$ in $\B^n \setminus B(x_0,r)$, $g$ in $B(x_0,\epsilon)$ and interpolate in between by a quasiconformal map guaranteed by Sullivan's Annulus Theorem (see for example \cite{TV}) applied to $P$ and $g$ in $B(x_0,r)$. Since every orbit of $f$ passes through $B(x_0,r)$ at most once, $f$ is uniformly quasiregular. 
\end{proof}

We will prove Theorem \ref{thm:3} below, which is a version of the Denjoy-Wolff Theorem in dimension three. However, most of our set-up applies to higher dimensions, and so we will state the preliminary results for any dimension.

\begin{lemma}
\label{lem:dw1}
Let $n\geq 2$ and $f:\B^n \to \B^n$ be a uqr map. If there is $x_0 \in \B^n$ and a subsequence $f^{m_k}$ converging locally uniformly to $x_0$, then $f^m \to x_0$ locally uniformly on $\B^n$. 
\end{lemma}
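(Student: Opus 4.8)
The plan is to leverage the fact that $f^{m_k} \to x_0$ locally uniformly to get control of one iterate $f^N$ on a large ball, and then to propagate this control to all subsequent iterates using the self-map property $f(\B^n) \subseteq \B^n$. First I would fix an exhaustion of $\B^n$ by closed balls $\overline{B(0,r_\ell)}$ with $r_\ell \uparrow 1$. Given $\varepsilon > 0$, I want to produce $M$ so that $f^m(\overline{B(0,r_\ell)}) \subseteq B(x_0,\varepsilon)$ for all $m \geq M$ and every fixed $\ell$. The key observation is that for any compact $E \subset \B^n$, the forward orbit $\bigcup_{m \geq 0} f^m(E)$ is a (relatively compact, by normality considerations, or at worst just a) subset of $\B^n$; more usefully, by local uniform convergence along $m_k$, there is a single index $N = m_{k_0}$ with $f^N(\overline{B(0,r_\ell)}) \subseteq B(x_0,\delta)$ where $\delta$ is chosen small enough that $\overline{B(x_0,\delta)} \subset \B^n$.

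The main step is then a "trapping" argument. Since $f$ maps $\B^n$ into $\B^n$ and $\overline{B(x_0,\delta)}$ is a compact subset of $\B^n$, the nested sequence of compacta $K_j := f^{jN}(\overline{B(0,r_\ell)})$ satisfies $K_1 \subseteq B(x_0,\delta)$ and $K_{j+1} = f^{jN}(K_1) \subseteq f^{jN}(\overline{B(0,r_\ell')})$ for a suitable $\ell'$, so after re-entering the argument one sees that the orbit of $\overline{B(0,r_\ell)}$ under $f^N$ stays in a fixed compact set $L \subset \B^n$. Now apply local uniform convergence of $f^{m_k}$ to $x_0$ on $L$: choose $k_1$ with $f^{m_{k_1}}(L) \subseteq B(x_0,\varepsilon)$. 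Writing an arbitrary large $m$ as $m = m_{k_1} + s$ with $s \geq 0$ and using $f^s(\overline{B(0,r_\ell)}) \subseteq L$ once $s$ is large (which holds for all $s \geq N$ after the trapping step, shrinking as needed), we get $f^m(\overline{B(0,r_\ell)}) = f^{m_{k_1}}(f^s(\overline{B(0,r_\ell)})) \subseteq f^{m_{k_1}}(L) \subseteq B(x_0,\varepsilon)$. Since $\ell$ and $\varepsilon$ were arbitrary, $f^m \to x_0$ locally uniformly.

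The step I expect to be the main obstacle is establishing that the forward orbit of a fixed compact ball $\overline{B(0,r_\ell)}$ under $f$ (equivalently under $f^N$) stays inside a single compact subset $L$ of $\B^n$, rather than accumulating on $\partial \B^n$. This is where normality of the iterate family is essential: the family $\{f^m\}$ restricted to $\B^n$ is normal (every subsequential limit is either a map into $\overline{\B^n}$ or constant), and combined with the hypothesis that one subsequential limit is the constant $x_0$, one argues that \emph{every} subsequential limit on the relevant compacta must take values in $\overline{\B^n}$ with the orbit eventually confined near $x_0$; a Vitali/Montel-type argument then forces the whole orbit of any fixed compactum to remain in a compact subset. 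Once this confinement is in hand, the rest is the elementary concatenation argument above. I would want to be slightly careful about the bookkeeping of which radii $r_\ell$ are used at each stage, but that is routine.
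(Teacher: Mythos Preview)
Your overall strategy is sound, and once the trapping step is repaired it gives a correct proof. The paper itself does not argue this lemma directly: it notes that the iterates form a normal family by Miniowitz's Montel theorem and then quotes \cite[Proposition 4.6]{HMM} verbatim, so there is little to compare beyond saying that the standard route in that reference is to show that \emph{every} subsequential normal limit equals $x_0$, rather than to build an invariant compact set as you do.

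The genuine gap is exactly where you flag it. As written, your trapping argument is circular: you pick $N$ so that $f^N(\overline{B(0,r_\ell)})\subseteq B(x_0,\delta)$, then note $K_1\subseteq \overline{B(0,r_{\ell'})}$ for some \emph{larger} $\ell'$, and try to control $f^{jN}(\overline{B(0,r_{\ell'})})$. But your $N$ was chosen for $r_\ell$, not $r_{\ell'}$, so ``re-entering the argument'' never closes up. The appeal to normality in your final paragraph does not rescue this either: a subsequential normal limit could a priori be a constant on $\partial\B^n$, and ruling that out is precisely the content of the lemma.

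The fix is short and stays within your framework. Fix $\delta>0$ with $\overline{B(x_0,\delta)}\subset\B^n$. Since $f^{m_k}\to x_0$ uniformly on the compact set $\overline{B(x_0,\delta)}$, there is some $N=m_{k_0}$ with
\[
f^N\bigl(\overline{B(x_0,\delta)}\bigr)\subseteq B(x_0,\delta).
\]
Thus $\overline{B(x_0,\delta)}$ is forward invariant under $f^N$, and
\[
L:=\bigcup_{r=0}^{N-1} f^r\bigl(\overline{B(x_0,\delta)}\bigr)
\]
is a compact subset of $\B^n$ which is forward invariant under $f$. Now, given any compact $K\subset\B^n$, choose $k$ with $f^{m_k}(K)\subseteq B(x_0,\delta)$; then $f^m(K)\subseteq L$ for all $m\ge m_k$. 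Your concatenation step then goes through exactly as you wrote: pick $k_1$ with $f^{m_{k_1}}(L)\subseteq B(x_0,\varepsilon)$ and, for $m\ge m_{k_1}+m_k$, write $f^m=f^{m_{k_1}}\circ f^{m-m_{k_1}}$ to conclude $f^m(K)\subseteq B(x_0,\varepsilon)$.
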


\begin{proof}
By the quasiregular version of Montel's Theorem, \cite[Theorem 4]{Miniowitz}, the family of iterates $\{f^m :m\in \N \}$ forms a normal family. The proof then follows verbatim from \cite[Proposition 4.6]{HMM}.
\end{proof}

\begin{lemma}
\label{lem:dw2}
Let $n\geq 2$ and $f:\B^n \to \B^n$ be a uqr map. Then either there is a point $x_0\in \overline{\B^n}$ and a subsequence $f^{m_k}$ so that $f^{m_k} \to x_0$ locally uniformly on $\B^n$, or the set of limit functions of subsequences of $f^m$ forms a semi-group of quasiconformal automorphisms of $\B^n$. In the latter case, $f$ itself must be quasiconformal.
\end{lemma}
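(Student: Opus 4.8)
The plan is to run the classical normal-family dichotomy for iterates, paying attention at the two points where the quasiregular setting differs from the holomorphic one: there is no ready-made identity theorem, but non-constant quasiregular maps are open, and that turns out to be the compensating feature.

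First I would note that $\{f^m\}$ is a normal family, by Miniowitz's version of Montel's theorem \cite[Theorem 4]{Miniowitz}, since it maps into the bounded set $\B^n$. If some subsequence $f^{m_k}$ converges locally uniformly to a constant $x_0\in\overline{\B^n}$, the first alternative of the lemma holds; so assume it does not. Then every locally uniform limit of a subsequence of $(f^m)$ is non-constant. Let $\Gamma$ denote the (nonempty) set of all such limit functions. Each $g\in\Gamma$ is a locally uniform limit of $K(f)$-quasiregular maps and is non-constant, hence is itself $K(f)$-quasiregular and open; since $g(\B^n)$ is then an open subset of $\overline{\B^n}$, in fact $g(\B^n)\subset\B^n$. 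So $\Gamma$ consists of non-constant $K(f)$-quasiregular self-maps of $\B^n$. A diagonal argument --- using that each $g\in\Gamma$ carries compact subsets of $\B^n$ into compact subsets of $\B^n$, so that composing with a convergent sequence of iterates is well behaved --- shows that $\Gamma$ is closed under composition, that composition is jointly continuous on $\Gamma$, and that $\Gamma$ is closed under locally uniform limits; being uniformly bounded, $\Gamma$ is compact. Thus $\Gamma$ is a compact topological semigroup.

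Next I would invoke the fact that a compact topological semigroup contains an idempotent, so there is $\rho\in\Gamma$ with $\rho\circ\rho=\rho$. This is where non-degeneracy enters: $\rho$ is non-constant and quasiregular, hence open, so $\rho(\B^n)$ is open; but $\rho\circ\rho=\rho$ forces $\rho(\B^n)$ to equal the fixed-point set $\{x:\rho(x)=x\}$, which is closed. Since $\B^n$ is connected and $\rho(\B^n)\neq\emptyset$, we get $\rho(\B^n)=\B^n$, that is, $\rho=\mathrm{id}$. Hence $\mathrm{id}\in\Gamma$: there are $m_k\to\infty$ with $f^{m_k}\to\mathrm{id}$ locally uniformly. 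Passing to a subsequence, $f^{m_k-1}\to\phi\in\Gamma$ with $\phi$ non-constant, and from $f^{m_k}=f\circ f^{m_k-1}=f^{m_k-1}\circ f$ one reads off $f\circ\phi=\phi\circ f=\mathrm{id}$. So $f$ is a bijection of $\B^n$ with quasiregular inverse, hence quasiconformal, and every $f^m$ is then a $K(f)$-quasiconformal automorphism of $\B^n$.

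To finish I would show every $g\in\Gamma$ is a quasiconformal automorphism of $\B^n$: as a non-constant locally uniform limit of uniformly $K(f)$-quasiconformal maps it is $K(f)$-quasiconformal, in particular injective; applying the compact-semigroup argument to the closure of $\{g^{\circ j}:j\geq1\}$ produces an idempotent $e$ in it, and since $e$ is injective $e\circ e=e$ gives $e=\mathrm{id}$, so $g^{\circ j_l}\to\mathrm{id}$ along some $j_l\to\infty$ and, as above, $g$ has a two-sided inverse in $\Gamma$. The hard part is the production of $\mathrm{id}$ in $\Gamma$: in the holomorphic case one uses the identity theorem to upgrade ``$h$ is the identity on the open set $g(\B^n)$'' to ``$h=\mathrm{id}$'', whereas here --- and in contrast to the several-variables setting, where proper holomorphic retractions do exist --- this is replaced by combining the compact-semigroup idempotent theorem with the observation that a non-constant quasiregular idempotent is automatically the identity, its image being its own clopen fixed-point set. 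The remaining points (joint continuity of composition on $\Gamma$, the diagonal extractions) are routine provided one keeps the convergence inside $\B^n$ rather than $\overline{\B^n}$.
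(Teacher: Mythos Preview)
Your argument is correct. The paper itself gives no proof of this lemma beyond observing normality of the iterates and then citing \cite[Proposition 4.9]{HMM} verbatim, so there is nothing to compare at the level of details. Your route---show $\Gamma$ is a compact semigroup of non-constant quasiregular self-maps, extract an idempotent, use openness of quasiregular maps to force the idempotent to be the identity, and then read off invertibility of $f$ and of every $g\in\Gamma$---is exactly the classical Fatou--Beardon argument transplanted to the quasiregular setting, and this is essentially the content of \cite[Proposition 4.9]{HMM}. The one point worth highlighting, which you handle correctly, is that the identity principle used in the holomorphic case is replaced here by the clopen argument on $\rho(\B^n)=\mathrm{Fix}(\rho)$; this is the genuinely quasiregular ingredient.
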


\begin{proof}
Again noting that the family of iterates forms a normal family, the proof then follows verbatim from \cite[Proposition 4.9]{HMM}.
\end{proof}

In \cite{HMM}, the authors note that in the case of a parabolic basin for a uqr map, they did not exclude the possibility of more than one fixed point on the boundary of the basin arising as a local uniform limit of a subsequence of iterates. This difficulty is the main obstruction to proving higher dimensional generalizations of the Denjoy-Wolff Theorem.

Before proving our next lemma, we will require the following coarse Lipschitz result for quasiregular maps.

\begin{lemma}[\cite{V1} Theorem 11.2] 
\label{lem:lip}
Let $n\geq 2$ and let $f:\B^n\to \B^n$ be $K$-quasiregular. If $d_h$ denotes the hyperbolic distance on $\B^n$, then for any $x,y\in \B^n$ we have
\[d_h(f(x),f(y)) \leq K_I(f)(d_h(x,y)+\ln 4),\]
recalling $K_I(f)$ is the inner distortion.
\end{lemma}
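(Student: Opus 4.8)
The statement to prove is Lemma \ref{lem:lip}, a coarse Lipschitz estimate for $K$-quasiregular self-maps of $\B^n$ in the hyperbolic metric. This is cited directly from V\"ais\"al\"a's book, so strictly speaking the "proof" is a reference. Nonetheless, here is how I would approach proving it from first principles.

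\textbf{Plan of proof.} The strategy is to reduce the global estimate to an infinitesimal one via integration along paths, using the two basic facts that (i) quasiregular maps satisfy a modulus inequality (the $K_I$-inequality for path families), and (ii) the hyperbolic metric on $\B^n$ is conformally equivalent to the Euclidean metric with density $\rho(x) = 2/(1-|x|^2)$, so hyperbolic distance between $x$ and $y$ is, up to the additive constant $\ln 4$, comparable to the conformal modulus of the family of curves separating $\{x\}$ from $\{y\}$ inside $\B^n$ — more precisely one works with the \emph{Gr\"otzsch-type capacity}. First I would recall that $d_h(x,y)$ is, up to bounded additive error, a monotone function of the capacity (or extremal length) of a suitable condenser, e.g. $\operatorname{cap}(\B^n, \{x\},\{y\})$ or the modulus of the family of paths joining small balls around $x$ and $y$ in $\B^n$. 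Then, since $f:\B^n\to\B^n$, the image condenser sits inside $\B^n$ as well, and the $K_I$-inequality for quasiregular maps, $M(\Gamma) \le K_I(f) M(f\Gamma)$ (equivalently the capacity inequality $\operatorname{cap}(f\text{-image}) \le K_I(f)\operatorname{cap}(\text{source})$, see \cite[Rickman]{Rickman}), transfers the estimate: the capacity between $f(x)$ and $f(y)$ in $\B^n$ is controlled by $K_I(f)$ times the capacity between $x$ and $y$. Converting back through the (asymptotically linear-in-$d_h$, up to $\ln 4$) relation between capacity and hyperbolic distance yields $d_h(f(x),f(y)) \le K_I(f)(d_h(x,y)+\ln 4)$.

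\textbf{Key steps, in order.} (1) Normalize: after pre- and post-composing with M\"obius automorphisms of $\B^n$ (which are hyperbolic isometries and conformal, hence do not change $K_I$ or hyperbolic distances), assume $x=0$ and $f(x)=0$; then $d_h(0,y)$ and $d_h(0,f(y))$ are explicit functions of $|y|$ and $|f(y)|$ respectively. (2) Set up the right condenser: let $t=|y|$ and consider the spherical ring domain / the family $\Gamma$ of paths in $\B^n$ joining $\overline{B}(0,|y|)$ to the sphere $|x|=s$ and let $s\to 1$; one gets the modulus of the Gr\"otzsch ring or, more conveniently, uses the \emph{distortion function} $\lambda_n$ or simply Teichm\"uller/Gr\"otzsch capacity asymptotics. (3) Apply the $K_I$ path-modulus inequality to the image family $f\Gamma$, which joins $\overline{f(B(0,|y|))} \ni 0$ (note $f$ may not be injective, so use that $f(B(0,|y|))$ contains a point at hyperbolic distance $d_h(0,f(y))$ from $0$) to the boundary of $\B^n$. (4) Use the known asymptotic behavior of the relevant capacity/modulus as a function of hyperbolic distance — namely that it behaves like $c_n e^{-(n-1)d_h}$ or that $\log$ of the capacity is asymptotically linear in $d_h$ with the $\ln 4$ appearing from the precise comparison of $d_h$ with $\log\frac{1}{1-|y|}$ near the boundary — to invert and obtain the stated inequality.

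\textbf{Main obstacle.} The main technical point is getting the \emph{additive} constant exactly $\ln 4$ rather than merely some dimensional constant. This requires keeping track of the precise comparison $d_h(0,y) = \log\frac{1+|y|}{1-|y|}$ versus the quantity that appears naturally in the capacity estimates, and handling the regime where $|y|$ (or $|f(y)|$) is not close to $1$, where the condenser estimates are not sharp. In practice one splits into cases: when $d_h(x,y)$ is bounded one uses a crude estimate plus the $\ln 4$ slack, and when $d_h(x,y)$ is large one uses the sharp asymptotic capacity inequality. A secondary subtlety is that $f$ need not be injective, so one must phrase everything in terms of the path-modulus inequality $M(\Gamma)\le K_I(f)\,M(f\Gamma)$ valid for all quasiregular (not just quasiconformal) maps, rather than the condenser-capacity change-of-variables which is cleanest for homeomorphisms. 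Since this is precisely \cite[Theorem 11.2]{V1}, in the paper it suffices to cite it; the above is the route one would take to reprove it.
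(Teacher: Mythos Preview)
The paper gives no proof of this lemma at all: it is stated with a direct citation and used as a black box. You correctly recognize that the ``proof'' here is just a reference, and your capacity/modulus sketch is a reasonable outline of how such results are established. One small correction: the reference \cite{V1} is Vuorinen's \emph{Conformal Geometry and Quasiregular Mappings}, not V\"ais\"al\"a's book, so your attribution is off. Since the paper itself provides nothing beyond the citation, there is no substantive comparison of approaches to make; your sketch is extra content, plausible in outline, though the precise emergence of the constant $\ln 4$ would need the explicit computations in Vuorinen rather than the asymptotic hand-waving you indicate.
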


\begin{lemma}
\label{lem:dw3}
Let $K\geq 1$, $n\geq 2$ and $f:\B^n\to \B^n$ be a $K$-uqr map. If there are two distinct points $x_0,x_1 \in \partial \B^n$ and two subsequences $f^{m_k},f^{p_k}$ with $f^{m_k}\to x_0$ and $f^{p_k} \to x_1$ locally uniformly on $\B^n$, then there is a continuum $C \subset S^{n-1}$ containing $x_0$ and $x_1$, and consisting of local uniform limits of subsequences of iterates of $f$.
\end{lemma}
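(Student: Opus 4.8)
The plan is to work entirely on $\B^n$ with the normal family of iterates and produce the required continuum as a Hausdorff limit of orbit paths. First I would record the basic structure: by the quasiregular Montel theorem the family $\{f^m\}$ is normal, so every subsequence of $(f^m)$ has a subsequence converging locally uniformly either to a non-constant quasiregular map of $\B^n$ into itself or to a constant. Let $\mathcal{C}\subseteq\overline{\B^n}$ be the set of all values of constant limit functions of such subsequences. By Lemma \ref{lem:dw1}, a constant limit lying in $\B^n$ would force $f^m\to$ that point locally uniformly, contradicting $f^{m_k}\to x_0\in\partial\B^n$; hence $\mathcal{C}\subseteq S^{n-1}$. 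A routine diagonal argument shows $\mathcal{C}$ is closed, hence compact, and clearly $x_0,x_1\in\mathcal{C}$. So it suffices to prove that $x_0$ and $x_1$ lie in the same connected component of $\mathcal{C}$: that component is then a continuum contained in $S^{n-1}$, and by construction each of its points is a local uniform limit of a subsequence of iterates.

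The main tool is a uniform bound on how far consecutive points of an orbit move in the hyperbolic metric, which is where uniform quasiregularity enters decisively. We may assume $f\neq\mathrm{id}$ and, after possibly changing the base point, that $f(0)\neq 0$ (a fixed interior point would again give a contradiction via Lemma \ref{lem:dw1}). Since $f^{m+1}=f^m\circ f$ and $f^m$ is $K$-quasiregular for every $m$, Lemma \ref{lem:lip} gives
\[ d_h\!\big(f^{m+1}(0),f^m(0)\big)=d_h\!\big(f^m(f(0)),f^m(0)\big)\leq K\big(d_h(f(0),0)+\ln 4\big)=:C_0 \]
with $C_0$ independent of $m$. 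Thus consecutive orbit points are within a fixed hyperbolic distance $C_0$; in particular, whenever an orbit point is close to $S^{n-1}$ the next one is also Euclidean-close to it. Now I interleave the subsequences so that $m_1<p_1<m_2<p_2<\cdots$, and for each $k$ let $\gamma_k\subseteq\B^n$ be the union of the hyperbolic geodesic segments joining $f^j(0)$ to $f^{j+1}(0)$ for $m_k\leq j\leq p_k-1$. Each $\gamma_k$ is compact and connected, contains $f^{m_k}(0)$ and $f^{p_k}(0)$, and every point of $\gamma_k$ lies within hyperbolic distance $C_0$ of some $f^j(0)$ with $m_k\leq j\leq p_k$. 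Passing to a subsequence, the $\gamma_k$ converge in the Hausdorff metric on compact subsets of $\overline{\B^n}$ to a continuum $C$, and since $f^{m_k}(0)\to x_0$ and $f^{p_k}(0)\to x_1$ we get $x_0,x_1\in C$.

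It remains to show $C\subseteq\mathcal{C}$, which forces $C\subseteq S^{n-1}$ and finishes the proof. Fix $\zeta\in C$, write $\zeta=\lim_k\zeta_k$ with $\zeta_k\in\gamma_k$, so $d_h(\zeta_k,f^{j_k}(0))\leq C_0$ for suitable $m_k\leq j_k\leq p_k$. If $\zeta\in S^{n-1}$, then $\zeta_k$ is near the boundary for large $k$, so $|f^{j_k}(0)-\zeta_k|\to 0$, hence $f^{j_k}(0)\to\zeta$; by normality a subsequence of $(f^{j_k})$ converges locally uniformly to a limit $L$ with $L(0)=\zeta\in S^{n-1}$, and since a non-constant limit maps $\B^n$ into $\B^n$, $L$ is the constant $\zeta$, so $\zeta\in\mathcal{C}$. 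If instead $\zeta\in\B^n$, then for large $k$ the points $f^{j_k}(0)$ stay in a fixed compact subset of $\B^n$ (the closed hyperbolic $(C_0+1)$-neighbourhood of $\zeta$), so after a subsequence $f^{j_k}\to g$ locally uniformly with $g(0)\in\B^n$; a constant $g$ is impossible by Lemma \ref{lem:dw1}, so $g$ would be a non-constant limit function of $(f^m)$. This last situation — equivalently, the possibility that the orbit, after approaching $S^{n-1}$, returns to linger near an interior point — is the main obstacle, and it is precisely the phenomenon of multiple subsequential behaviours flagged after Lemma \ref{lem:dw2}. I would rule it out by applying the dichotomy of Lemma \ref{lem:dw2} to such a $g$, which is a $K$-uniformly quasiregular self-map of $\B^n$ whose iterates $g^r=\lim_j f^{rn_j}$ are again limit functions of $(f^m)$: in the constant-limit alternative one pulls the limiting value back, via a diagonal argument and Lemma \ref{lem:dw1}, to a contradiction with $x_0\in\partial\B^n$; in the automorphism alternative one shows $\mathrm{id}$ and hence all of $\{f^m\}$ lie among the limit functions, contradicting the coexistence of the distinct constant limits $x_0\neq x_1$ together with the normal-family structure. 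This gives $C\cap\B^n=\varnothing$, so $C\subseteq\mathcal{C}$ is the desired continuum.
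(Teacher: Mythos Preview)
Your approach is genuinely different from the paper's. The paper argues by contradiction: it lets $X\subset S^{n-1}$ be the set of accumulation points of the orbit $(f^m(0))$, assumes $x_0$ and $x_1$ lie in different components of $X$, chooses a compact separating set $Y\subset S^{n-1}\setminus X$, thickens it radially to a collar $Y'$ missing the orbit, and then uses the same hyperbolic step bound from Lemma~\ref{lem:lip} to show $|f^{m+1}(0)-f^m(0)|\to 0$, so the orbit would have to enter $Y'$, a contradiction. Your route is constructive: you build the continuum directly as a Hausdorff limit of geodesic chains. Both arguments hinge on the single fact that $|f^m(0)|\to 1$, i.e.\ that the orbit has no hyperbolically bounded subsequence.

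This is exactly where your proposal has a gap. You correctly identify the obstacle (a putative $\zeta\in C\cap\B^n$ yields a non-constant limit $g$ of $(f^{j_k})$) but your resolution, applying Lemma~\ref{lem:dw2} to $g$ rather than to $f$, does not close. In the ``constant-limit alternative'' for $g$, if $g^{r_i}\to c$ with $c\in\partial\B^n$, Lemma~\ref{lem:dw1} says nothing, and a diagonal argument only produces another boundary limit for $(f^m)$, not a contradiction with $x_0\in\partial\B^n$. In the ``automorphism alternative'' you assert that $\mathrm{id}$ appears among the limits of $(g^r)$, but this fails if $g$ is, say, loxodromic, where $g^r$ tends to a boundary point; so you are thrown back to the unresolved first sub-case. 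The paper bypasses all of this in one line: a bounded subsequence of $(f^m(0))$ forces a fixed point of $f$ in $\B^n$ (this is the content behind \cite[Proposition~4.9]{HMM}, which is what Lemma~\ref{lem:dw2} is quoting), and an interior fixed point $p$ immediately contradicts $f^{m_k}(p)=p\to x_0\in\partial\B^n$. If you simply invoke that fact for $f$, your Hausdorff-limit construction goes through cleanly: every point of $\gamma_k$ lies within hyperbolic distance $C_0$ of an orbit point, all orbit points with index $\ge m_k$ tend to $S^{n-1}$, hence $C\subset S^{n-1}$, and then your argument that each $\zeta\in C$ is a constant locally uniform limit is correct. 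The payoff of your version is that it avoids the ad~hoc separating collar $Y'$ and yields the continuum explicitly; the paper's version is shorter once $|f^m(0)|\to 1$ is in hand.
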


\begin{proof}
Towards a contradiction, assume that such a continuum does not exist. Denote by $X$ the set in $S^{n-1}$ consisting of local uniform limits of subsequences of iterates of $f$, that is, $X = \{ \text{limit points of } f^n(0) \}$. Then clearly $X$ is closed in $S^{n-1}$. Consequently, $S^{n-1} \setminus X$ is open in $S^{n-1}$, and separates $X$ in the sense that $X$ contains at least two components, one containing $x_0$ and one containing $x_1$.

Choose a connected compact subset $Y$ of $S^{n-1}\setminus X$ which still separates $x_0$ and $x_1$ and extend $Y$ to $Y'$ radially inside $\B^n$ by Euclidean distance $\epsilon >0$ in such a way that $Y' \cap \{ f^m(0) : m\in \N \} = \emptyset$. Note that if such an $\epsilon$ cannot be found, then we contradict the fact that $Y \subset S^{n-1} \setminus X$. By construction, $Y'$ separates $x_0$ and $x_1$ in $\{x : 1-\epsilon \leq |x| \leq 1 \}$. Let $Y_0,Y_1$ denote the components of the complement of $Y'$ containing $x_0,x_1$ respectively and set $\delta >0$ to be the Euclidean distance between them, that is,
\begin{equation}
\label{eq:dw3eq2} 
\delta = \inf \{ |x-y | : x \in Y_0, y\in Y_1 \}.
\end{equation}
See Figure \ref{fig:6}.

\begin{figure}[h]
\begin{center}
\includegraphics[width=5in]{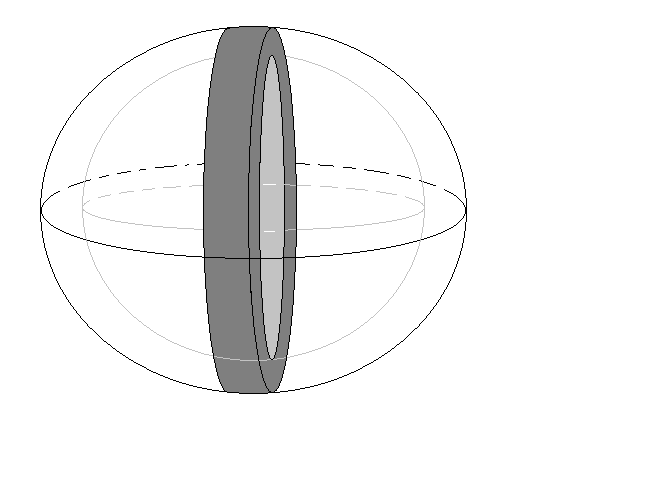}
\caption{The set $Y'$ in $\overline{\B^3}$.}
\label{fig:6}
\end{center}
\end{figure}

Next, find $N_0$ so that $|f^m(0)|> 1-\epsilon$ for $m\geq N_0$. Note that there cannot be a bounded subsequence of $f^m(0)$ with respect to the hyperbolic metric  because then there would be a fixed point of $f$ in $\B^n$, which is not the case. By passing to subsequences if necessary, assume that $|f^{m_k}(0) - x_0| < 1/k$ and $|f^{p_k}(0) - x_1| <1/k$ for all $k\in \N$. By Lemma \ref{lem:lip}, since $f$ is $K$-uqr there is a constant $M = M(K)$ such that
\[ d_h(f^{m+1}(0),f^m(0)) \leq M\]
for all $m\geq 1$. Hence as $m\to \infty$, by comparison of the hyperbolic and Euclidean metrics near the boundary of the ball, we have
\[ |f^{m+1}(0) - f^m(0)| \to 0\]
as $m\to \infty$. Choose $N_1$ so that 
\begin{equation}
\label{eq:dw3eq1}
|f^{m+1}(0) - f^m(0)| < \delta /2
\end{equation} 
for $m \geq N_1$. Then if $N_2 = \max\{N_0,N_1\}$, and if $m\geq N_2$, the sequence $f^m(0)$ is contained in $\{x : 1-\epsilon < |x| <1 \}$ and contains subsequences which intersect $Y_0$ and $Y_1$. Fix $k\in \N$ so that $f^{m_k}(0) \in Y_0$, $f^{p_k}(0) \in Y_1$ and without loss of generality assume $m_k < p_k$. Then the collection of points $\{ f^j(0) : m_k < j <p_k \}$ must pass through $Y'$ by \eqref{eq:dw3eq2} and \eqref{eq:dw3eq1}. This is a contradiction, and so $X$ must contain $x_0$ and $x_1$.
\end{proof}

\begin{lemma}
\label{lem:dw4}
Let $K\geq 1$, $n\geq 2$ and $f:\B^n \to \B^n$ a proper, surjective $K$-uqr map. Then $f$ extends to a map $\widetilde{f}:\overline{\R^n} \to \overline{\R^n}$ and the restriction of $\widetilde{f}$ to $S^{n-1}$ is $K$-uqr.
\end{lemma}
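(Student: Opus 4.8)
The plan is to proceed in three stages: extend $f$ continuously to $\overline{\B^n}$, reflect it across $S^{n-1}$ by the inversion $\sigma(x)=x/|x|^2$ to obtain $\widetilde f$ on $\overline{\R^n}$, and then show that the restriction of a reflection-symmetric $K$-quasiregular map to the fixed sphere is $K$-quasiregular, applying this to every iterate. For the first stage, properness forces $|f(x)|\to 1$ as $|x|\to 1$, so the cluster set of $f$ at each $\xi\in S^{n-1}$ is a continuum in $S^{n-1}$, and $f$ is a closed, open, discrete branched cover of finite degree. I would show the cluster set is a point by a modulus estimate: for small $r$, the family $\Delta_r$ of curves in $\B^n\cap B(\xi,1/2)$ joining $\B^n\cap\partial B(\xi,r)$ to $\B^n\cap\partial B(\xi,1/2)$ satisfies $M(\Delta_r)\le c_n(\log\frac{1}{2r})^{1-n}\to 0$, so $M(f\Delta_r)\to 0$ by the $K_O$-inequality; but if the cluster set had positive diameter, then along a subsequence $r_k\to 0$ the curves of $f\Delta_{r_k}$ would join two continua in $\B^n$ of diameter bounded below (the fixed continuum $f(\B^n\cap\partial B(\xi,1/2))$ and $f(\B^n\cap\partial B(\xi,r_k))$), forcing $M(f\Delta_{r_k})\gtrsim 1$, a contradiction. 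Hence $f$ extends continuously to $\overline{\B^n}$, still denoted $f$, with $f(S^{n-1})=S^{n-1}$; alternatively this continuous extension for proper quasiregular self-maps of the ball may simply be cited.

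For the reflection, note $\sigma$ is a conformal, orientation-reversing involution of $\overline{\R^n}$ fixing $S^{n-1}$ pointwise. I set $\widetilde f=f$ on $\overline{\B^n}$ and $\widetilde f=\sigma\circ f\circ\sigma$ on $\overline{\R^n}\setminus\B^n$; since $f(S^{n-1})\subseteq S^{n-1}$ and $\sigma$ is the identity there, the two definitions agree on $S^{n-1}$, so $\widetilde f:\overline{\R^n}\to\overline{\R^n}$ is continuous, and off $S^{n-1}$ it is sense-preserving and $K$-quasiregular (on $\B^n$ it is $f$; on the exterior it is $f$ pre- and post-composed with the conformal map $\sigma$, which does not change the maximal dilatation). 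As $S^{n-1}$ has finite, hence $\sigma$-finite, $(n-1)$-dimensional Hausdorff measure, it is removable for continuous quasiregular maps (via the Sobolev gluing lemma in hyperplane charts and Reshetnyak's theorem), so $\widetilde f$ is $K$-quasiregular on $\overline{\R^n}$. Openness of $\widetilde f$ gives $\widetilde f(\B^n)\subseteq\B^n$ and $\widetilde f(\overline{\R^n}\setminus\overline{\B^n})\subseteq\overline{\R^n}\setminus\overline{\B^n}$, whence $\widetilde f^{-1}(S^{n-1})=S^{n-1}$. Since $\sigma$ commutes with composition and $f$ maps $\B^n$ to $\B^n$, one checks $(\widetilde f)^m=\widetilde{f^m}$ for all $m$, so $K((\widetilde f)^m)=K(f^m)\le K$ and $\widetilde f$ is $K$-uqr.

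For the restriction, because $(\widetilde f|_{S^{n-1}})^m=\widetilde{f^m}|_{S^{n-1}}$ it suffices to prove: if $F:\overline{\R^n}\to\overline{\R^n}$ is $K$-quasiregular with $F\circ\sigma=\sigma\circ F$ and $F^{-1}(S^{n-1})=S^{n-1}$, then $F|_{S^{n-1}}$ is $K$-quasiregular on $S^{n-1}$. Continuity, discreteness, openness and sense-preservation of $F|_{S^{n-1}}$ follow from those of $F$ via $F^{-1}(S^{n-1})=S^{n-1}$. At a point $p\in S^{n-1}$ where $F$ is differentiable, differentiating the symmetry relation yields $DF(p)\,D\sigma(p)=D\sigma(F(p))\,DF(p)$; since $D\sigma(q)$ is the Euclidean reflection across $T_qS^{n-1}$ for $q\in S^{n-1}$, the linear map $DF(p)$ sends $T_pS^{n-1}$ into $T_{F(p)}S^{n-1}$ and the normal line into the normal line, so in adapted orthonormal frames $DF(p)=\operatorname{diag}(A,c)$ with $A=D(F|_{S^{n-1}})(p)$ and $c>0$. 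A short computation, treating the cases $c\ge\|A\|$ and $c<\|A\|$ (and analogously for the least stretching of $A$), shows that the inner and outer $(n-1)$-dimensional dilatations of $A$ are bounded by the corresponding $n$-dimensional dilatations of $DF(p)$, hence by $K$. Provided $F|_{S^{n-1}}\in W^{1,n-1}_{\mathrm{loc}}(S^{n-1})$, Reshetnyak's theorem then gives that $F|_{S^{n-1}}$ is $K$-quasiregular; applying this with $F=\widetilde{f^m}$ for each $m$ completes the proof.

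The step I expect to be the real obstacle is the Sobolev regularity of the restriction in the last stage: since $S^{n-1}$ is a null set in $\R^n$, the $W^{1,n}_{\mathrm{loc}}$-regularity of $\widetilde f$ does not localize to $S^{n-1}$, and the trace of a generic $W^{1,n}$ map onto a hyperplane is too rough to lie in $W^{1,n-1}$. The reflection symmetry must be used here: flattening $S^{n-1}$ to a hyperplane, $\widetilde f$ maps a half-space into a half-space with boundary into the boundary, and one invokes a reflection principle for quasiregular maps of a half-space to conclude the boundary restriction is quasiregular with the same maximal dilatation; alternatively one verifies the $K_O$-modulus inequality for $\widetilde f|_{S^{n-1}}$ directly, by thickening $(n-1)$-dimensional curve families on $S^{n-1}$ into $n$-dimensional ones in a thin collar and applying the modulus inequality for $\widetilde f$, using continuity of $\widetilde f$ to control images near $S^{n-1}$. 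Everything else in the argument is routine or citable.
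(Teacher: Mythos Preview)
Your approach is substantially different from the paper's and carries the acknowledged gap you identify. The paper does not build the extension by reflection at all: it simply cites Rickman's boundary-extension theorem (Theorem VII.3.16 in \cite{Rickman}) to obtain $\widetilde f:\overline{\R^n}\to\overline{\R^n}$ directly, with the same distortion bound, and then observes that since $f$ is uqr so is $\widetilde f$. For the restriction, the paper completely sidesteps the Sobolev question by using the \emph{metric} characterisation of quasiregularity (Rickman II.6): a continuous, open, discrete, sense-preserving map with uniformly bounded linear dilatation $H(x,\cdot)$ is quasiregular. The bound on $H(x,\widetilde f|_{S^{n-1}})$ at each $x\in S^{n-1}$ is read off from the ambient bound on $H(x,\widetilde f)$ via Theorem~\ref{thm:lindist}, and this is uniform over all iterates. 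No $W^{1,n-1}$ regularity, no trace theory, and no pointwise differentiability are needed.

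What then carries the weight in the paper is orientation. You assert sense-preservation of $F|_{S^{n-1}}$ as following routinely from $F^{-1}(S^{n-1})=S^{n-1}$; the paper explicitly warns this is \emph{not} automatic, citing Mayer's Chebyshev-type uqr maps which, on an invariant hyperplane piece, fail to be orientation-preserving. The paper's device is a short homology computation: the connecting map $\partial:H_n(\overline{\B^n},S^{n-1})\to H_{n-1}(S^{n-1})$ is an isomorphism, and naturality of the long exact sequence under $f^*$ forces $\deg(f|_{S^{n-1}})=\deg(f|_{\B^n})>0$. Your route to orientation via the block form $DF(p)=\operatorname{diag}(A,c)$ and ``$c>0$ since interior maps to interior'' is reasonable at points of differentiability, but it only makes sense once you already have enough regularity on the sphere, which is exactly your obstacle. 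In short: the paper's metric-definition-plus-homology argument is both shorter and avoids the analytic difficulty you flagged; your differential/reflection approach would give a sharper constant if the $W^{1,n-1}$ step could be cleanly justified, but as written that step is the genuine gap.
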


\begin{proof}
We may extend $f$ to a map $\widetilde{f}: \overline{\R^n} \to \overline{\R^n}$ by \cite[Theorem VII.3.16]{Rickman}. Since $f$ is uqr, the extension is too.
Since $S^{n-1}$ is completely invariant under $\widetilde{f}$, the restriction to $S^{n-1}$ is a map of the same degree as $f$.
With slight abuse of notation, from here we denote the restriction of $\widetilde{f}$ to $S^{n-1}$ by $f$.

Since $\widetilde{f}$ is quasiregular, its restriction $f$ also has bounded distortion. To see this, apply Theorem \ref{thm:lindist} to $\widetilde{f}$ at $x_0 \in S^{n-1}$ and obtain the same conclusion for $f$ at $x_0$. It further follows that $f^m$ has uniformly bounded distortion over all $m\in \N$.
Via the metric definition of quasiregular maps, see for example \cite[II.6]{Rickman}, we are then done if we know that $f$ is orientation preserving. We remark that this needs to be shown, since, for example, \cite{Mayer2} contains Chebyshev-type examples of uqr mappings in $\R^n$ which, when restricted to a subset of a hyperplane on which they are completely invariant, are not always orientation preserving. 

To this end, and using notation from, for example, \cite{DK}, consider the long exact sequence of the homology of the pair $(\overline{\B^n},S^{n-1})$ and the map between $\Z$-modules $f^*$ induced by $f$:

\[\begin{tikzcd}  \cdots \arrow{r} & H_n(\overline{\B^n}) \arrow{r}{j^*} & H_n(\overline{\B^n},S^{n-1}) \arrow{r}{\partial}\arrow[swap]{d}{f^*} & H_{n-1}(S^{n-1}) \arrow{r}{i^*}\arrow[swap]{d}{f^*} & H_{n-1}(\overline{\B^n}) \arrow{r} & \cdots\\
\cdots \arrow{r} & H_n(\overline{\B^n}) \arrow{r}{j^*} & H_n(\overline{\B^n},S^{n-1}) \arrow{r}{\partial} & H_{n-1}(S^{n-1}) \arrow{r}{i^*} & H_{n-1}(\overline{\B^n}) \arrow{r} & \cdots
\end{tikzcd}\]

Since $\overline{\B^n}$ is contractible, then $H_k(\overline{\B^n}) = 0$ for any $k$ and $n$.  Further, $H_{n-1}(S^{n-1}) \simeq \Z$.  Hence, 
\[0 = \text{im} j^* = \ker\partial\]
and 
\[ \text{im} \partial = \ker i^* = \Z.\]
So, $\partial$ is an isomorphism of $\Z$-modules.  For $x$ a generator of $H_n(\overline{\B^n}, S^{n-1})$, we need to know that
$\deg(f_{S^{n-1}}) \partial (x) \in H_n(S^{n-1})$ satisfies $\deg (f_{S^{n-1}}) > 0$.  

To see this, note that $H_n(\overline{\B^n}, S^{n-1}) = H_n(C(\overline{\B^n})/C(S^{n-1}))$, where $C(\overline{\B^n})/C(S^{n-1})$ is a quotient of the chain complexes on the closed ball and $S^{n-1}$.  Since $f$ is orientation-preserving on the interior of $\overline{\B^n}$, then $f^*$ maps a generator $x$ of $H_n(\overline{\B^n}, S^{n-1})$ to $(\deg f_{\B^n})x$, with $\deg f_{\B^n} > 0$.   Since $f^*$ commutes with $\partial$, then $f^*\partial(x) = \partial(\deg f_{\B^n}(x))$.  Since $\partial$ is an isomorphism, then $\partial(x)$ is a generator of $H_{n-1}(S^{n-1})$, and $\deg f_{S^{n-1}}(\partial(x)) =  \partial(\deg f_{\B^n}x)  = \deg f_{\B^n}(\partial(x))$.  Hence, $f$ restricted to $S^{n-1}$  is orientation-preserving.  
\end{proof}

We are now in a position to prove Theorem \ref{thm:3}.

\begin{proof}[Proof of Theorem \ref{thm:3}]
Let $f:\B^3 \to \B^3$ be a surjective proper uqr map. By Lemma \ref{lem:dw2}, either the family of iterates of $f$ forms a semi-group of automorphisms of $\B^3$, or there is a point $x_0 \in \overline{\B^3}$ and a subsequence $f^{m_k}$ which converges locally uniformly on $\B^3$ to $x_0$.

If $x_0 \in \B^3$, then by Lemma \ref{lem:dw1}, $f^m \to x_0$ locally uniformly on $\B^3$. 
Otherwise $x_0 \in \partial \B^3$. Assume there is another point $x_1 \in \partial \B^3$ arising as a locally uniform limit of a convergent subsequence of iterates of $f$.  Then by Lemma \ref{lem:dw3}, there is a continuum $C\subset \partial \B^3$ of such points containing $x_0$ and $x_1$. Since $f$ extends continuously to $S^2 = \partial \B^3$, this continuum $C$ consists of fixed points of $f$.

By Lemma \ref{lem:dw4}, the restriction of $f$ to $S^2$ is uqr. Since every uqr map of $S^2$ to itself is the quasiconformal conjugate of a rational map, see e.g. \cite{H}, we have $f = \varphi^{-1} \circ R \circ \varphi$ for some quasiconformal map $\varphi$ and rational map $R$. Then $\varphi(C)$ is a continuum of fixed points of $R$. We obtain a contradiction since a non-constant rational map that is not the identity cannot have a continuum of fixed points by the Identity Theorem, and our map is neither constant nor the identity.
\end{proof}


\begin{thebibliography}{widest-label}


\bibitem{Berg0}
W. Bergweiler,
Karpi\'nska's paradox in dimension 3, 
{\it Duke Math. J.}, {\bf 154} (2010), 599-630.

\bibitem{Berg}
W. Bergweiler,
Iteration of Quasiregular Mappings,
{\it Comp. Meth. and Func. Th.}, {\bf 10} (2010), 455-481.

\bibitem{Berg1}
W. Bergweiler,
Fatou-Julia theory for non-uniformly quasiregular maps, {\it Ergodic Theory Dynam. Systems}, {\bf 33} (2013), 1-23.

\bibitem{BE0}
W. Bergweiler, A. Eremenko, 
Meromorphic functions with linearly distributed values and Julia sets of rational functions, 
{\it Proc. Amer. Math. Soc.}, {\bf 137} (2009), 2329-2333.

\bibitem{BE}
W. Bergweiler, A. Eremenko,
Dynamics of a Higher Dimensional Analog of the Trigonometric Functions
{\it Ann. Acad. Sci. Fenn.}, Vol 36 (2011), 165-175.

\bibitem{BT}
M. G. Brin, T. L. Thickstun, 3-manifolds which are end 1-movable, {\it Mem. Amer. Math. Soc.}, {\bf 81} (1989), no. 411, viii+73 pp.

\bibitem{DK}
J. F. Davis, P. Kirk, {\it Lecture notes in algebraic topology}, Graduate Studies in Mathematics, {\bf 35}, American Mathematical Society, Providence, RI, 2001.

\bibitem{Drasin}
D. Drasin,
On a method of Holopainen and Rickman,
{\it Israel J. of Math.}, Vol 101 (1997), Issue 1, 73-84.

\bibitem{Dunbar}
W. D. Dunbar, Geometric orbifolds, {\it Rev. Mat. Univ. Complut. Madrid}, {\bf 1} (1988), no. 1-3, 67-99.

\bibitem{EV}
A. Eremenko, S. van Strien,
Rational maps with real multipliers,
{\it Trans. Amer. Math. Soc.}, Vol.363, (2010), No.12, 6453-6463

\bibitem{Fatou}
P. Fatou, 
Sur les  \'equations fonctionnelles. Troisi\`eme m\'emoire, 
{\it Bull. Soc. Math. France}, {\bf 48} (1920), 208-314.

\bibitem{Fletcher1}
A. Fletcher,
Poincar\'{e} Linearizers in Higher Dimensions,
{\it Proc. Amer. Math. Soc.}, {\bf 143} (2015), 2543-2557.

\bibitem{FM}
A. Fletcher, D. MacClure,
On Quasiregular Linearizers,
{\it Comp. Meth. and Func. Th.}, {\bf 15}, no.2 (2015), 263-276.

\bibitem{Freedman}
M. H. Freedman, Michael Hartley The topology of four-dimensional manifolds, {\it J. Differential Geom.}, {\bf 17} (1982), no. 3, 357-453. 

\bibitem{H}
A. Hinkkanen, Uniformly quasiregular semigroups in two dimensions,
{\it Ann. Acad. Sci. Fenn.}, {\bf 21} (1996), 205-222.

\bibitem{HM}
A. Hinkkanen, G. Martin,
Attractors in quasiregular semigroups, 
{\it XVIth Rolf Nevanlinna Colloquium (Joensuu, 1995)},
de Gruyter, Berlin (1996), 135-141.

\bibitem{HMM}
A. Hinkkanen, G. Martin, V. Mayer,
Local Dynamics of Uniformly Quasiregular Mappings,
{\it Math. Scand.}, {\bf 95} (2004) no. 1, 80-100.

\bibitem{IM}
T. Iwaniec, G. Martin
{\it Geometric Function Theory and Non-linear Analysis}, 
Oxford University Press, 2001.

\bibitem{Lange}
C. Lange, Characterization of finite groups generated by reflections and rotations, {\it J. Topol.}, {\bf 9} (2016), no. 4, 1109-1129.

\bibitem{LLNW}
B. Lemmens, B. Lins, R Nussbaum, M. Wortel,
{\it Denjoy-Wolff theorems for Hilbert's and Thompson's metric spaces}
arXiv:1410.1056.

\bibitem{Martin02}
G. J. Martin, Analytic continuation for Beltrami systems, Siegel's theorem for UQR maps, and the Hilbert-Smith conjecture, {\it Math. Ann.}, {\bf 324} (2002), no. 2, 329-340.

\bibitem{Martin}
G. J. Martin,
Extending rational maps,
{\it Conf. Geom. Dyn.}, {\bf 8} (2004), 158-166.

\bibitem{MM}
G. J. Martin, V. Mayer,
Rigidity in holomorphic and quasiregular dynamics,
{\it Trans. Amer. Math. Soc.}, Vol. 355, No. 1 (2003), 4349-4363.

\bibitem{Martio}
O. Martio,
On $k$-periodic quasiregular mappings in $\R^n$,
{\it Ann. Acad. Sci. Fenn. Ser. A. I. Math.}, {\bf 1} (1975), 207-220.

\bibitem{MS}
O. Martio, U. Srebro,
Periodic quasimeromorphic mappings in $\R^n$,
{\it Journal d'An. Math.}, {\bf 28}, Issue 1 (1975), 20-40.

\bibitem{Mayer2}
V. Mayer,
Quasiregular analogues of Critically Finite Rational Functions with Parabolic Orbifold,
{\it J. Anal. Math.}, {\bf 75} (1998), 105-119.

\bibitem{Mayer1}
V. Mayer,
Uniformly Quasiregular Mappings of Latt\`{e}s type,
{\it Conform. Geom. Dyn.}, {\bf 1} (1997), 104-111.

\bibitem{Milnor}
J. Milnor,
{\it Dynamics in One Complex Variable}, Third Edition,
Annals of Mathematical Studies, {\bf 160}, Princeton University Press, Princeton, NJ, 2006.

\bibitem{Miniowitz}
R. Miniowitz, Normal families of quasimeromorphic mappings,
{\it Proc. Amer .Math. Soc.}, {\bf 84}, no.1 (1982), 35-43.

\bibitem{Rickman} S. Rickman,
{\it Quasiregular mappings}, Ergebnisse der Mathematik und ihrer
Grenzgebiete 26, Springer, 1993.

\bibitem{Ritt}
J. Ritt,
Periodic Functions with a Multiplication Theorem,
{\it Trans. Amer. Math. Soc.}, Vol. 23, No. 1 (Jan., 1922), pp. 16-25.

\bibitem{Stallings}
J. Stallings, The piecewise-linear structure of Euclidean space, {\it Proc. Cambridge Philos. Soc.}, {\bf 58} 1962 481-488.

\bibitem{Steinmetz}
N. Steinmetz,
Zalcman functions and rational dynamics, {\it New Zealand J. Math}, {\bf 32} (2003), 1-14.

\bibitem{Sullivan}
D. Sullivan,
The ergodic theory at infinity of an arbitrary discrete group of hyperbolic motions,
{\it Ann. of Math. Stud.}  {\bf 97} (1981), 465-496.

\bibitem{Sz}
A. Szczepanski, {\it Geometry of Crystallographic Groups},
Algebra and Discrete Mathematics, 4, World Scientific Publishing Co. Pte. Ltd., Hackensack, NJ, 2012.

\bibitem{Tukia}
P. Tukia, On two dimensional quasiconformal groups,
{\it Ann. Acad. Sci. Fenn. Ser. A I Math.}, {\bf 5} (1980), 73-78.

\bibitem{Tukia2}
P. Tukia, 
A quasiconformal group not isomorphic to a M\"obius group,
{\it Ann. Acad. Sci. Fenn. Ser. A I Math.} {\bf 6}, no. 1 (1981), 149-160. 

\bibitem{TV}
P. Tukia and J. V\"ais\"ala,
Lipschitz and quasiconformal approximation and extension,
{\it Ann. Acad. Sci. Fenn. Ser. A I Math.}, {\bf 6}, no. 2 (1982), 303-342.

\bibitem{V1}
M. Vuorinen,
{\it Conformal Geometry and Quasiregular Mappings}, First Edition
Springer-Verlag, Heidelberg {\bf 1319}, 1988.

\bibitem{Zorich}
V. A. Zorich,  A theorem of M. A. Lavrent`ev on quasiconformal space maps, 
{\it Math. USSR, Sb.}, {\bf 3} (1967), 389-403; translation from {\it Mat. Sb. (N.S.)}, {\bf 74} (116), (1967), 417-433.


\end{thebibliography}
\end{document}